\title{A time adaptive optimal control approach for 4D-var data assimilation problems governed by parabolic PDEs}
\author[1]{\fnm{Carmen} \sur{Gr\"a\ss le} \orcidlink{0000-0003-0318-0740}}\email{c.graessle@tu-braunschweig.de}
\author*[1]{\fnm{Jannis} \sur{Marquardt} \orcidlink{0009-0008-0248-7533}}\email{j.marquardt@tu-braunschweig.de}
\affil[1]{\orgdiv{Institute for Partial Differential Equations}, \orgname{TU Braunschweig}, \orgaddress{\street{Universit\"atsplatz 2}, \city{Braunschweig}, \postcode{38106}, \country{Germany}}}
\theoremstyle{plain}
\newtheorem{theorem}{Theorem}[section]
\newtheorem{lemma}[theorem]{Lemma}
\newtheorem{corollary}[theorem]{Corollary}
\newtheorem{remark}[theorem]{Remark}
\newtheorem{definition}[theorem]{Definition}
\newtheorem{example}[theorem]{Example}
\newcommand{\R}{\mathbb{R}}
\newcommand{\N}{\mathbb{N}}
\newcommand{\M}{\mathcal{M}}
\newcommand{\V}{\mathcal{V}}
\renewcommand{\H}{\mathscr{H}}
\newcommand{\C}{\mathcal{C}}
\newcommand{\A}{\mathcal{A}}
\newcommand{\dobs}{{d_{\text{obs}}}}
\newcommand{\X}{\mathcal{X}}
\newcommand{\Y}{\mathcal{Y}}
\renewcommand{\L}{\mathcal{L}}
\newcommand{\intd}{\,\text{d}}
\newcommand{\Uad}{U_{\text{ad}}}
\newcommand{\braket}[2]{
\left\langle #1, #2 \right\rangle
}
\newcommand{\opnorm}[1]{\left|\hspace{-1pt}\left|\hspace{-1pt}\left|#1\right|\hspace{-1pt}\right|\hspace{-1pt}\right|}
\begin{document}

\abstract{
    We interpret the 4D-var data assimilation problem for a parabolic partial differential equation (PDE) in the context of optimal control and revisit the process of deriving optimality conditions for an initial control problem. This is followed by a reformulation of the optimality conditions into an elliptic PDE, which is only dependent on the adjoint state and can therefore be solved directly without the need for e.g.\ gradient methods or related iterative procedures. Furthermore, we derive an a-posteriori error estimation for this system as well as its initial condition. We utilize this estimate to formulate a procedure for the creation of an adaptive grid in time for the adjoint state. This is used for 4D-var data assimilation in order to identify suitable time points to take measurements. 
}

\keywords{Data assimilation, optimal control of partial differential equations, initial control, a-posteriori error estimate, time adaptivity}

\pacs[MSC Classification]{
49J20, 
49K20, 
49M41, 
65M50, 
65M15 
}

\maketitle

\section*{Novelty statement} In this work, we consider a 4D-var data assimilation problem from the perspective of optimal control and present the following contributions:
\begin{itemize}
    \item We consider an optimal control problem where the control enters through the initial condition and reformulate the optimality system as a single second order in time and fourth order in space elliptic problem (similar to e.g.\ \cite{time-adaptivity-mpc,buettner-dissertation,space-time} where the control enters through the right-hand side).
    \item We derive an a-posteriori error estimate in time for the elliptic problem (similar to \cite{time-adaptivity-mpc,space-time}).
    \item We utilize the elliptic formulation in order to solve a 4D-var data assimilation problem and utilize the a-posteriori error estimate to identify suitable time points to take measurements.
\end{itemize}

\section*{Data availability statement}
The implementation of the numerical examples has been realized in Python. The code of the presented examples is available at \url{https://doi.org/10.5281/zenodo.13133804}.

\section{Introduction}
    \subsection{Problem statement and objective}
       We start with a general introduction of the data assimilation problem and compare it to the specific continuous setting considered during the rest of this work. The goal of data assimilation is to update a mathematical model with observations from the real world. In 4D-var data assimilation, the observations $y_i\in \Y_i \subseteq\R^\dobs$ for $0 \leq i\leq N$ are taken at multiple time instances $0 = t_0 <... < t_N = T$, where $T<\infty$. We refer to $[0,T]$ as assimilation window and call $\Y_i$ observation space (at time $t_i$). The current state of the model may be described in the state space by $x_i\in\X\subseteq \R^d$. The forward evolution of the model is governed by some dynamics and may be described by $x_{i+1} = \M_i (x_{i})$ with $\M_i:\X\to\X$. Usually it holds that $\dobs \ll d$. A connection between the state space $\X$ and the observation spaces $\Y_i$ may be realized by observation operators $\mathcal H_i:\X\to\Y_i$. 
        In order to match the model prediction with the observations, the initial state $x_0$ can be chosen as a solution to 
        \begin{subequations}\label{eq:da-problem}
        \begin{equation}
            \underset{x_0\in\R^d}{\mathrm{argmin}} \Big\{J(x_0):=\; \frac{1}{2}\sum_{i=0}^N \left\Vert \mathcal H_i(x_i) - y_i \right\Vert^2_{\Y_i} + \frac{\alpha}{2}\left\Vert x_0 - x_0^{(b)}\right\Vert^2_\X\Big\},
        \end{equation}
        subject to
        \begin{equation}
            x_{i+1} =\; \M_i (x_i) \quad \forall i \in\{0,...,N-1\},
        \end{equation}
        \end{subequations}
        with an initial guess $x^{(b)}_0 \in \mathbb{R}^d$ and a trust coefficient $\alpha >0$ which describes how much confidence can be put into $x^{(b)}_0$ compared to the measurements $y_i$. 

        It is convenient to rewrite $J(x_0)$ in some situations, such that the operators and vectors are not only specified on the space, but in a space-time domain. Following the lines in \cite{data-freitag, synergy-inverse-freitag}, such an \glqq all at once approach\grqq\space can be formulated by writing $J(x_0)$ as 
        \begin{equation}
            J(x_0)= \frac{1}{2} \left\Vert \mathcal H(x) - y^{(d)} \right\Vert^2_{ \Y} + \frac{\alpha}{2}\left\Vert x_0 - x_0^{(b)}\right\Vert^2_{\X},
        \end{equation}
        where $\Y = \Y_0\times \cdots\times \Y_N$, $x = (x_0^T, \hdots,x_N^T)^T$, $\mathcal H (x) = (\mathcal H(x_0)^T,\hdots,\mathcal H(x_N)^T)^T$ and $y^{(d)} = (y_0^T, \hdots,y_N^T)^T$. Hence, the occurring states and observations are now defined on a discrete space-time grid instead of multiple spatial domains as before.
        
        It is known that the data assimilation problem may also be formulated in the context of optimal control, see e.g.\ \cite{data-assimilation-book, fredi}. If the governing dynamics is linear and parabolic, the data assimilation task may be formulated as
        \begin{subequations}\label{eq:control-problem}
            \begin{equation}
                \underset{v\in \Uad}{\mathrm{argmin}} \Big\{J(v) := \; \frac{1}{2}\left\Vert y(v) - y^{(d)}\right\Vert^2_{L^2(\Omega_T)} + \frac{\alpha}{2}\left\Vert v - y^{(b)}\right\Vert^2_{L^2(\Omega)}\Big\},
            \end{equation}
            subject to
            \begin{equation}
            \left\{
                \begin{array}{rll}
                    y_{t}(t,x; v) + \A y(t,x; v) &=\; f(t,x) &\text{for } (t,x)\in \Omega_T,\\
                    y(t,x; v) &=\; 0 & \text{for } (t,x )\in\Sigma_T,\\
                    y(0,x; v) &=\; v & \text{for } x\in \Omega
                \end{array}
            \right.
            \end{equation}
        \end{subequations}
         with $f, y^{(d)}\in L^2(\Omega_T)$, $y^{(b)}\in L^2(\Omega)$ and the set of admissible controls $\Uad\subseteq L^2(\Omega)$. Here, $\Omega\subseteq \R^d$ denotes an open and bounded domain for which we assume that it has a sufficiently smooth boundary $\Gamma := \partial \Omega$. Furthermore, we use the notation $\Omega_T := (0,T] \times \Omega$ as well as $\Sigma_T :=(0,T] \times \Gamma$. The self-adjoint second order elliptic differential operator $\A: L^2(\Omega_T)\to L^2(\Omega_T)$ acts on the space domain and can be written as
         \begin{equation}\label{eq:introduction-A}
            \A y(t,x;v) = - \sum_{i,j=1}^d \frac{\partial}{\partial x_i}\left(a_{ij}(x) \frac{\partial}{\partial x_j}y(t,x;v)\right) + a_0(x)y(t,x;v),
         \end{equation}
         with $a_{ij}, a_0 \in L^\infty (\Omega)$ such that $a_{ij}=a_{ji}$.
         
         While passing from the discrete context in \eqref{eq:da-problem} to the continuous problem \eqref{eq:control-problem}, it has been assumed that the data $y^{(d)}$ is available everywhere, i.e.\ $y^{(d)}\in L^2(\Omega_T)$. Despite our intention of including more freedom to the measurements in future works, the absence of the observation operator $\mathcal H$ in the continuous problem is required in the context of this paper.\\

         The first objective of this paper is to show equivalence between finding a solution to the initial control problem \eqref{eq:control-problem} and solving a certain elliptic PDE, which is second order in time and fourth order in space. The solution to this PDE allows to determine the optimal control to \eqref{eq:control-problem} and hence an updated initial state in the data assimilation problem \eqref{eq:da-problem} readily.\\

        Choosing times $0=t_0 < ... < t_N=T$ at which data is incorporated into the model in an equidistant way might not be favorable in some situations. While choosing $N$ small may lead to a grid on which instantaneous changes of the reality may not be reflected by the model, a large amount of observations may prevent real-time or quasi-real-time applications. The fact that in some scenarios each measurement may cause costs or that the process of taking a measurement may influence the systems behavior are further motivations for keeping $N$ small. We will therefore investigate an a-posteriori error estimate based procedure of finding optimal times at which observations should be taken.\\

        In order to accomplish the reformulation and the error estimation, we will closely follow the steps in \cite{time-adaptivity-mpc, space-time}, where similar results have been established for different control problems. We account for further works which investigated similar reformulations in Section~\ref{sec:related-work}.

        \subsection{Paper organization}
        While Section~\ref{sec:optimal-control-problem} does not contain novel results, it introduces the systems and notations needed for the rest of the paper and recalls the required results from optimal control theory on which the subsequent considerations in this paper built upon. The key outcome of this section is Theorem~\ref{thm:optimality-system}, in which existence and uniqueness of a solution to the optimal control problem is proven and the optimality conditions are derived. 

        Next, the connection to the fourth order elliptic equation will be established in Section~\ref{sec:elliptic_fourth_order}. After the derivation of its weak form and an alternative mixed weak formulation, numerical examples follow, in which the effectiveness of the derived method will be demonstrated by the determination of a solution to the fourth order system using the finite element method in space-time. 

        Section~\ref{sec:time-adaptivity} starts with the derivation of the a-posteriori error estimate for the discretization of the fourth order elliptic problem from before. An adaptive method for the refinement of the time intervals using the a-posteriori error estimate will be proposed in this section. The section will be concluded with numerical examples.

        \subsection{Related work}\label{sec:related-work}
        There is a wide range of literature on which this work builds up or is related to. Data assimilation has a wide area of applications and occurs in geosciences \cite{data-geo-1, data-geo-2, data-geo-3}, medicine \cite{data-med-1, data-med-2}, meteorology \cite{data-mete-1, data-mete-2, data-mete-3, data-mete-4} and many more. It is typically introduced as an inverse Bayesian problem, see i.e.\ \cite{data-freitag, data-assimilation-kody, bayesian-tutorial}. An introduction to optimal control of partial differential equations may be found for instance in \cite{hinze-ulbrich,lions, manzoni21, fredi}. The control theory in Section~\ref{sec:optimal-control-problem} is mostly oriented after \cite{hinze-ulbrich,lions, fredi} and strongly utilizes the notation which is established in \cite{lions}. It is the commonly used notation for parabolic equations in Hilbert spaces, for which the reader may refer to e.g.\ \cite{lions72, showalter96, zeidler} for further reference.
        The idea of using the framework of optimal control for data assimilation is well established and we refer to \cite{data-assimilation-book, fisher09} for more insight.

        The approach of combining the state and adjoint state into a single equation has already proven to be successful in case of control problems with distributed control \cite{time-adaptivity-mpc, buettner-dissertation, space-time, nps-smooth-regularization, nps-comsol, nps-parabolic-optimal}. The derivation of an a-posteriori error estimate has also been established and used for the creation of a time adaptive grid in the context of distributed control \cite{time-adaptivity-mpc, alla18, space-time}. For more general information about a-posteriori error estimates, we refer to \cite{interpolation-result}.
        
    \section{Framework for optimal control via initial condition}\label{sec:optimal-control-problem}
    In this section the optimal control framework for initial control problems of the form \eqref{eq:control-problem} will be set up. The stated results are well known to readers experienced in optimal control. Nevertheless, this section provides proofs for some of those results which are often dismissed as an exercise in the standard literature. The notation and procedure is oriented after \cite{hinze-ulbrich,lions, fredi}.
    
    \subsection{Notation and governing dynamics}\label{sec:governing-dynamics}

    Let $(V, H, V^*)$ be an evolution triple. Hence, $H$ and $V$ are separable Hilbert spaces with dual spaces $V^*$ and $H^*$, such that $V$ is densely and continuously embedded in $H$. Furthermore, it holds $V \hookrightarrow H=H^* \hookrightarrow V^*,$ where the usual identification of $H$ with its dual has been applied.
    
    The inner product in $V$ is given as $\braket{v}{w}_V$, while $\braket{u^*}{v}_{V^*, V} := u^*(v)$ denotes the dual pairing between $V^*$ and $V$ for $v,w\in V$ and $u^*\in V^*$. Since we identified $H^*$ with $H$, it holds that $\braket{u}{v}_H =\braket{u}{v}_{H^*, H}$ for all $u,v\in H = H^*$.

    The space of (equivalence classes of) measurable square integrable functions $y:\mathfrak D\to \mathfrak C$ is given as $L^2(\mathfrak D; \mathfrak C)$. The case of $\mathfrak C = \R$ will be denoted as $ L^2(\mathfrak D) := L^2(\mathfrak D; \R)$. We introduce the abbreviation $\V := L^2(0,T; V)$ and consider $\V^* := L^2(0,T; V^*)$ as its dual. By denoting $y_t$ as the derivative of $y\in\V$, we may define the Bochner space with its norm as
    $$W(0,T; V) := \left\{y \in \V \;\middle| \;y_t \in \V^*\right\},\quad\Vert y\Vert_{W(0,T;V)} := \left(\Vert y\Vert^2_{\V} + \Vert y_t \Vert^2_{\V^*}\right)^{\frac{1}{2}}.$$

    Let us introduce the general setting for which the dynamics will be considered in this section by defining a bilinear form $a:(0,T]\times V^2\to\R$, which fulfills
    \begin{itemize}
        \item $a(t; \cdot, \cdot)$ is continuous for fixed $t\in (0,T]$, i.e.\ there exists $c>0$ such that
            \begin{equation}\label{eq:elliptic-one}
            \vert a(t;\varphi,\psi)\vert \leq c \Vert \varphi\Vert_V\cdot\Vert\psi\Vert_V\quad \forall \varphi,\psi\in V,
            \end{equation}
        \item $a(t; \cdot, \cdot)$ is $V$-elliptic for fixed $t\in (0,T]$, i.e.\ there exists $\gamma > 0$ such that
            \begin{equation}\label{eq:elliptic-two}
                a(t;\psi,\psi) \geq \gamma \Vert\psi\Vert^2_V\quad \forall \psi\in V,
            \end{equation}
        \item $a(\,\cdot\,; \varphi, \psi)$ is measurable on $(0,T]$ for all $\varphi,\psi\in V$.
    \end{itemize}
     
    For each $t\in(0,T]$ we define an elliptic operator $\A(t)$ with dual operator $\A^*(t)$, which is based upon $a(t;\varphi,\psi)$, via
    $$a(t;\varphi,\psi) = \braket{\A(t)\varphi}{\psi}_{V^*, V} = \braket{\A^*(t) \psi}{\varphi}_{V^*,V}\quad \forall \varphi,\psi \in V,$$
    where $\A(t)$ and $\A^*(t)$ are bounded and linear, i.e.\ $\A(t),\A^*(t)\in \L(\V, \V^*)$. The Cauchy problem: find $y\in W(0,T; V)$ such that
    \begin{equation}\label{eq:evolution}
        \left\{
        \begin{array}{rll}
            y_t + \A(t) y &=\; f & \text{in } \V^*,\\
            y(0) &=\; y_0& \text{in } H
        \end{array}
        \right.
    \end{equation}
    admits a unique solution in $W(0,T; V)$, which depends continuously on $f $ and $y_0$ (see e.g.\ \cite[Chapter III, Theorem 1.2]{lions}).

    \subsection{Initial control problem}\label{sec:initial-control}
    Let $\H$ denote an arbitrary Hilbert space with dual space $\H^*$. In this section, we study the optimal control problem 
    \begin{subequations}
    \label{eq:abstract-control}
        \begin{equation}\label{eq:control-min-functional}
            \underset{v\in \Uad}{\mathrm{argmin}} \Big\{J(v) := \; \frac{1}{2}\left\Vert C y(v) - y^{(d)}\right\Vert^2_\H + \frac{\alpha}{2}\left\Vert v - y^{(b)}\right\Vert^2_H\Big\},
        \end{equation}
        subject to
        \begin{equation}\label{eq:control-state}
            \left\{
            \begin{array}{rll}
                y_t(v) + \A(t) y(v) &=\; f & \text{in }\V^*,\\
                y(0;v) &=\; v & \text{in }H,\\
                y(v) &\in\; W(0,T;V),
            \end{array}
            \right.
        \end{equation}
        where $\alpha > 0$, $y^{(d)}\in L^2(0,T;\H)$, $y^{(b)}\in H$ and $\A(t)\in\L(\V, \V^*)$ as defined in Section~\ref{sec:governing-dynamics}. The space of controls has been chosen to be $H$, including the closed and convex subset of admissible controls $\Uad \subseteq H$. Furthermore, we use $C\in\L(\V,\H)$ in order to connect the state space $\V$ and the observation space $\H$ in case that state and observation space differ. Despite the possibility of allowing the state and observation space to vary from each other using $C$, we will not pursue this option further in this work and will only use $C$ to express an isomorphism between $\V$ and $\H$ as soon as we concretize the setting in Section \ref{sec:elliptic_fourth_order}. 
    
        The adjoint equation is given as
        \begin{equation}\label{eq:control-adjoint}
            \left\{
            \begin{array}{rll}
                - p_t(v) + \A^*(t) p(v) &=\; C^*\Lambda (C y(v) - y^{(d)}) & \text{in }\V^*,\\
                p(T;v) &=\; 0 & \text{in } H,\\
                p(v) &\in\; W(0,T;V),
            \end{array}
            \right.
        \end{equation}
    \end{subequations}
    with $C^*\in\L (\H^*, \V^*)$ being the adjoint of $C$ and the canonical isomorphism $\Lambda: \H \to \H^*$ provided by the Riesz representation theorem (we do not identify $\H$ with $\H^*$ yet, since this results in difficulties in case of $\H \subseteq V^*$ after the preceded identification of $H$ with its dual). Slightly different versions of the next Theorem are well-known, see e.g.\ \cite{lions, fredi}.\\
    
    \begin{theorem}[Optimality system]\label{thm:optimality-system}
        The optimal control problem \eqref{eq:abstract-control} admits a unique solution $ u\in\Uad$ which satisfies
        $$\braket{p(0,u) + \alpha ( u - y^{(b)})}{v - u}_H \geq 0 \quad \forall v \in \Uad.$$
    \end{theorem}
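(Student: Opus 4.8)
The plan is to argue along the classical linear--quadratic route for parabolic optimal control.

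\textbf{Existence and uniqueness.} First I would exploit the linearity of \eqref{eq:control-state} to write the control-to-state map affinely as $y(v) = Sv + \hat y$, where $S\in\L(H, W(0,T;V))$ maps $v$ to the solution of $y_t+\A(t)y=0$, $y(0)=v$, and $\hat y\in W(0,T;V)$ solves the same equation with right-hand side $f$ and zero initial value; both are well defined and bounded by the well-posedness statement quoted after \eqref{eq:evolution}. Inserting this into $J$ shows that $j(v):=J(v)$ is a quadratic functional on $H$ whose purely quadratic part equals $\tfrac12\braket{(S^*C^*\Lambda C S + \alpha\,\mathrm{Id})v}{v}_H$. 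Since $\alpha>0$, the operator in the bracket is bounded, self-adjoint and coercive, so $j$ is continuous, strictly convex and coercive on $H$; as $\Uad$ is nonempty, closed and convex, the standard minimization result on Hilbert spaces (see e.g.\ \cite{lions,fredi}) provides a unique minimizer $u\in\Uad$.

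\textbf{Optimality condition.} Because $j$ is convex and G\^ateaux differentiable, $u$ minimizes $j$ over the convex set $\Uad$ if and only if $\braket{j'(u)}{v-u}_H\ge0$ for all $v\in\Uad$. Differentiating gives, for $h\in H$,
$$\braket{j'(u)}{h}_H = \braket{\Lambda\big(Cy(u)-y^{(d)}\big)}{Cz_h}_{\H^*,\H} + \alpha\braket{u-y^{(b)}}{h}_H,$$
where $z_h=Sh\in W(0,T;V)$ solves $(z_h)_t+\A(t)z_h=0$, $z_h(0)=h$. To turn the first summand into an expression in the initial data, I would bring in the adjoint state $p=p(u)\in W(0,T;V)$ from \eqref{eq:control-adjoint}, whose existence and uniqueness follows from the same well-posedness result applied to the equation after the time reversal $t\mapsto T-t$. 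Then, using $\braket{\Lambda(\,\cdot\,)}{Cz_h}_{\H^*,\H}=\braket{C^*\Lambda(\,\cdot\,)}{z_h}_{\V^*,\V}$, the adjoint equation, the identity $\braket{\A^*(t)p}{z_h}_{V^*,V}=a(t;z_h,p)=\braket{\A(t)z_h}{p}_{V^*,V}$, and the integration-by-parts formula valid in $W(0,T;V)$,
$$\int_0^T\braket{-p_t}{z_h}_{V^*,V}\intd t = \braket{p(0)}{z_h(0)}_H - \braket{p(T)}{z_h(T)}_H + \int_0^T\braket{(z_h)_t}{p}_{V^*,V}\intd t,$$
the space--time integrals cancel by the state equation for $z_h$, and with $p(T)=0$, $z_h(0)=h$ one is left with $\braket{j'(u)}{h}_H=\braket{p(0,u)+\alpha(u-y^{(b)})}{h}_H$. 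Choosing $h=v-u$ in the variational inequality yields the assertion.

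\textbf{Main obstacle.} The bulk of this is routine; the delicate step is the adjoint representation, where one has to keep the identifications straight --- $H$ is identified with $H^*$, whereas $\H$ is deliberately \emph{not} identified with $\H^*$, which is exactly why $\Lambda$ and $C^*$ enter --- and has to justify the Bochner integration-by-parts formula, in particular that the endpoint terms $\braket{p(\cdot)}{z_h(\cdot)}_H$ are meaningful; the latter relies on the embedding $W(0,T;V)\hookrightarrow C([0,T];H)$.
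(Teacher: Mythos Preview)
Your proposal is correct and follows essentially the same route as the paper: both reduce existence and uniqueness to a standard linear--quadratic result via the affine control-to-state map (the paper does a substitution $\tilde v = v - y^{(b)}$ and cites \cite[Theorem 2.14]{fredi}, you display the coercive Hessian $S^*C^*\Lambda C S + \alpha\,\mathrm{Id}$ directly), and both derive the variational inequality by pairing the adjoint equation with the linearized state and integrating by parts in time. The only cosmetic difference is that the paper works with $y(v)-y(u)$ in place of your sensitivity $z_h$, which is the same object since $y(v)-y(u)=S(v-u)=z_{v-u}$.
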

    \begin{proof}
        As described in Section~\ref{sec:governing-dynamics}, both \eqref{eq:control-state} and \eqref{eq:control-adjoint} admit a unique solution in $W(0,T;V)$ for each control $v\in \Uad$ (with the adjoint equation going backwards in time). Furthermore, there exists a linear and continuous operator, which maps the initial condition of the initial value problem onto its solution in space-time. We will denote the concatenation of this operator and $C$ by $S:H \to \H$. Hence, $S$ maps $y(0;v)\mapsto Cy(v)$ and is linear and continuous, too. By means of $S$ we can equivalently consider
        \begin{equation}\label{eq:altered-control-problem}
            \underset{\tilde v\in \tilde U_\text{ad}}{\mathrm{argmin}} \Big\{\tilde J(\tilde v) := \; \frac{1}{2}\left\Vert  S \tilde v - \left(y^{(d)} - S y^{(b)}\right)\right\Vert^2_\H + \frac{\alpha}{2}\left\Vert \tilde v\right\Vert^2_H\Big\}
        \end{equation}
        instead of \eqref{eq:control-min-functional}-\eqref{eq:control-state} for the proof of existence and uniqueness of an optimal control. In the new formulation, the variable $\tilde v$ arose from the substitution of $\tilde v := v - y^{(b)}$, followed by the switch of $\Uad$ with $\tilde U_\text{ad} := \{v - y^{(b)}\mid v \in \Uad\}$. The existence and uniqueness of an optimal control $\tilde u \in \tilde U_\text{ad}$ of \eqref{eq:altered-control-problem} and hence an optimal control $u\in\Uad$ of \eqref{eq:abstract-control} follows from \cite[Theorem 2.14]{fredi}.

        An admissible control $u\in\Uad$ is optimal for \eqref{eq:control-min-functional} if $\braket{ J'(u)}{ v - u}_{H^*,H} \geq 0$ for all $v\in\Uad$. Executing the derivative holds the equivalence to
        \begin{equation}\label{eq:variational-inequality}
        \braket{ C^*\Lambda\left(C y(u) - y^{(d)}\right)}{ y(v) - y(u)}_{\V^*,\V} + \alpha \braket{  u - y^{(b)}}{v -  u}_H\geq 0
        \end{equation}
        (cf. \cite[Chapter I, Section 1.4]{lions}). The utilization of the end time value of the adjoint allows to write
        \begin{equation*}
            \begin{split}
                \int_0^T&\braket{-p_t(u)}{y(v) -y(u)}_{V^*, V}\intd t\\
                &= \int_0^T \braket{ y_t(v) - y_t(u)}{p(u)}_{V^*, V}\intd t- \Big[\braket{ p(u)}{ y(v)-y(u)}_V\Big]_{t=0}^T \\
                &= \int_0^T \braket{y_t(v) - y_t(u)}{p(u)}_{V^*, V} \intd t + \braket{p(0;u)}{ v- u}_H 
            \end{split}.
        \end{equation*}
        Furthermore, it holds that
        \begin{equation*}
            \int_0^T\braket{ \A^*(t)p(u)}{ y(v)-y(u)}_{V^*,V} \intd t = \int_0^T \braket{ p(u)}{ \A(t) \left(y(v) - y(u)\right)}_{V,V^*}\intd t.
        \end{equation*}
        The combination of these equations with the adjoint equation and \eqref{eq:variational-inequality} yields
        \begin{equation*}
                 \int_0^T \braket{\left[\frac{\partial}{\partial t} + \A(t) \right](y(v)-y( u))}{ p(u)}_{V^*,V} \intd t + \braket{ p(0; u) + \alpha (u-y^{(b)})}{v -u }_H \geq 0.
        \end{equation*}
        Since the first term vanishes after the insertion of the state equation, the claim follows.\\
    \end{proof}

    \section{Data assimilation via reformulation of the optimality system}
    \label{sec:elliptic_fourth_order}
    In this section the optimality conditions of the initial control problem \eqref{eq:control-problem} will be reformulated as an elliptic system which is second order in time and fourth order in space. After the introduction of the notation in Section~\ref{sec:sobolev-notions}, the fourth order system will be derived in Section~\ref{sec:reformulation}. The existence and uniqueness of the fourth order system will be discussed in Section~\ref{sec:existence-and-uniqueness}, while Section~\ref{sec:mixed-form} provides a mixed variational form of the problem, which will then be numerically implemented with piecewise linear, continuous finite elements. We will discuss the results of those experiments in Section~\ref{sec:numerical-examples-1}.
    
    The idea and structure of this section found its motivation in \cite{space-time, time-adaptivity-mpc}. Nevertheless, similar reformulations have been done for slightly different problems by other authors, whose works have been accounted for in Section~\ref{sec:related-work}.
    
    \subsection{Notations for Sobolev spaces}\label{sec:sobolev-notions}
    First, let us introduce the Sobolev spaces of order $k\in\N_0$ as
    \begin{equation*}
        \begin{split}
            H^k(\mathfrak D, \mathfrak C) &:= \left\{v\in L^2(\mathfrak D, \mathfrak C)\mid v \text{ has weak derivatives } D^\beta v \in L^2(\mathfrak D, \mathfrak C)\text{ for all }|\beta|\leq k\right\},\\
            H^k_0(\mathfrak D, \mathfrak C) &= \{v\in H^k(\mathfrak D, \mathfrak C)\mid D^\beta v = 0 \text{ on }\partial \Omega\text{ in the sense of traces } (|\beta| \leq k-1)\}
        \end{split}
    \end{equation*}
    with norm $\Vert v\Vert_{k,\Omega} := \left(\sum_{|\beta|\leq k}\Vert D^\beta v \Vert_{L^2(\mathfrak D, \mathfrak C)}^2\right)^{\frac{1}{2}}$. We write $H^k(\mathfrak D) := H^k(\mathfrak D, \R)$, $H^k_0(\mathfrak D) := H^k_0(\mathfrak D, \R)$ and introduce the notion of $H^{-1}(\Omega)$ as the dual space of $H_0^1(\Omega)$. Furthermore, we define the spaces
    \begin{equation*}
        \begin{split}
            H^{2,1}(\Omega_T) &:= L^2(0,T;H^2(\Omega)\cap H_0^1(\Omega))\cap H^1(0,T;L^2(\Omega)),\\
            H^{2,1}_0(\Omega_T) &:= \left\{y \in H^{2,1}(\Omega_T)\mid y(T) = 0 \text{ in }\Omega\right\}.
        \end{split}
    \end{equation*}
    A norm for both $H^{2,1}(\Omega_T)$ and $H^{2,1}_0(\Omega_T)$ is given as
    $$\Vert v\Vert_{2,1,\Omega_T} := \left(\Vert v \Vert^2_{L^2(0,T;H^2(\Omega))} + \Vert v \Vert^2_{H^1(0,T;L^2(\Omega))}\right)^{\frac{1}{2}}.$$

    \subsection{Reformulation of the optimality system}
    \label{sec:reformulation}
    We recognize \eqref{eq:control-problem} as a special case of \eqref{eq:abstract-control} after choosing $V= H^1_0(\Omega)$, $H=L^2(\Omega)$, $\H = L^2(\Omega_T)$ and $C$ as the injection map $C:L^2(0,T; L^2(\Omega))\to L^2(\Omega_T)$. We identify $\H$ with its dual, such that the canonical isomorphism $\Lambda$ may be neglected from now on. Furthermore, we choose $\A(t)$ according to \eqref{eq:introduction-A} independent of time as $\A: L^2(0,T;H^1_0(\Omega))\to L^2(0,T;H^{-1}(\Omega))$ with
    \begin{equation}\label{eq:operator-A}
        \A v = - \nabla \cdot \left[A \nabla v\right] + a_0 v = - \sum_{i,j=1}^d \frac{\partial}{\partial x_i}\left(a_{ij} \frac{\partial}{\partial x_j}v\right) + a_0v,
    \end{equation}
    where the matrix $A = (a_{ij})_{1\leq i,j\leq d}$ is symmetric with coefficients $a_{ij}=a_{ji}$ and $a_{ij}, a_0 \in L^\infty (\Omega)$. In comparison to Section~\ref{sec:optimal-control-problem}, $\A(t)\equiv \A$ is now independent of time and due to the symmetry of $a_{ij}$ self-adjoint in $H^1_0(\Omega)$ such that $\A^*=\A$. Since $\A$ is elliptic, equations \eqref{eq:elliptic-one} and \eqref{eq:elliptic-two} hold true in the form: there exist  $c,\gamma>0$ such that it holds for all $v,w\in H^1_0(\Omega)$ that
    \begin{equation}\label{eq:elliptic-continuous}
        \left\vert \int_\Omega (\A v) w\intd x\right\vert  \leq c \Vert v\Vert_{1,\Omega} \Vert w \Vert_{1,\Omega}
    \end{equation}
    and
    \begin{equation}\label{eq:elliptic-coercive}
        \int_\Omega (\A v) v\intd x \geq \gamma  \Vert v \Vert_{1,\Omega}^2.
    \end{equation}
    
    Existence of an optimal $u\in\Uad$ is ensured by Theorem~\ref{thm:optimality-system}. Moreover, there exists a unique state $y\in W(0,T;H^1_0(\Omega))$ and a unique adjoint state $p\in W(0,T; H^1_0(\Omega))$ which obey the state equation
    \begin{subequations}\label{eq:concrete-conditions}
        \begin{equation}\label{eq:concrete-state}
        \left\{
            \begin{array}{rll}
                 y_t + \A y &=\; f &\text{in } \Omega_T,\\
                 y &=\; 0 & \text{on } \Sigma_T,\\
                 y(0) &=\; u & \text{in } \Omega,
            \end{array}
            \right.
        \end{equation}
        the adjoint state equation
        \begin{equation}\label{eq:concrete-adjoint}
            \left\{
            \begin{array}{rll}
                - p_t + \A p &=\; y - y^{(d)} &\text{in }\Omega_T,\\
                  p &=\; 0 &\text{on } \Sigma_T,\\
                  p(T) &=\; 0 &\text{in } \Omega
            \end{array}
            \right.
        \end{equation}
        and the optimality condition
        \begin{equation}\label{eq:projected-variational}
            u = \mathbb P_{\Uad} \left[y^{(b)}-\frac{1}{\alpha}p(0)\right] \quad \text{in } \Omega.
        \end{equation}
    \end{subequations}

    In order to rewrite this optimality condition as a single fourth order elliptic problem, we need the following regularity result.\\

    \begin{lemma}[{Higher regularity, after \cite[\S 7.1.3, Theorems 5 \& 6]{evans}}]\label{lem:regularity}
        Let $y$ denote a solution to \eqref{eq:concrete-state} and $p$ of \eqref{eq:concrete-adjoint}.
        \begin{enumerate}[label=(\roman*)]
            \item Assume $u\in H_0^1(\Omega)$ and $f, y_d \in L^2(0,T; L^2(\Omega))$. Then it holds
            $$y,p\in L^2(0,T;H^2(\Omega))\cap L^\infty(0,T; H_0^1(\Omega))\cap H^1(0,T;L^2(\Omega)).$$

            \item Assume $u\in H_0^1(\Omega)\cap H^3(\Omega)$, $f, y^{(d)}\in L^2(0,T; H^2(\Omega))\cap H^1(0,T; L^2(\Omega))$ and let the first order compatibility condition $f(0) - \A u\in H_0^1(\Omega)$ hold true. Then it holds
            $$y,p\in L^2 (0,T; H^4(\Omega))\cap H^1(0,T; H^2(\Omega))\cap H^2(0,T; L^2(\Omega))$$
        \end{enumerate}
    \end{lemma}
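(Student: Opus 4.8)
The plan is to read this lemma off directly from the global regularity theory for second order parabolic equations in the cited parts of \cite{evans}, applied once to the state equation \eqref{eq:concrete-state} and once to the adjoint equation \eqref{eq:concrete-adjoint}. The only genuine subtleties are the backward-in-time direction of \eqref{eq:concrete-adjoint} and the verification that the data appearing on its right-hand side meet the hypotheses of the relevant theorem. I would therefore organise the proof as follows: in each of the two parts, first treat the state equation, which has exactly the form covered in \cite{evans}, and only afterwards treat the adjoint equation, since the regularity of its source term $y - y^{(d)}$ is inferred from the conclusion just obtained for $y$. For the adjoint equation I would use the substitution $q(t) := p(T-t)$ throughout; recalling $\A^* = \A$, the function $q$ solves the forward Cauchy problem $q_t + \A q = (y - y^{(d)})(T-\cdot)$ in $\Omega_T$, $q = 0$ on $\Sigma_T$, $q(0) = p(T) = 0$, to which the theorems of \cite{evans} apply verbatim.

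\emph{Part (i).} Apply \cite[\S 7.1.3, Theorem 5]{evans} to \eqref{eq:concrete-state}: since $u \in H^1_0(\Omega)$, $f \in L^2(0,T;L^2(\Omega))$ and $\A$ is the time-independent, self-adjoint operator from \eqref{eq:operator-A} on the smoothly bounded domain $\Omega$, one obtains $y \in L^2(0,T;H^2(\Omega)) \cap L^\infty(0,T;H^1_0(\Omega))$ together with $y_t \in L^2(0,T;L^2(\Omega))$, which is the assertion for $y$. Passing to $q(t) = p(T-t)$ as above, its source $(y - y^{(d)})(T-\cdot)$ lies in $L^2(0,T;L^2(\Omega))$ by what was just shown and the hypothesis on $y^{(d)}$, and $q(0) = 0 \in H^1_0(\Omega)$; the same theorem yields the claimed regularity of $q$, hence of $p$.

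\emph{Part (ii).} Apply \cite[\S 7.1.3, Theorem 6]{evans} with $m = 1$ to \eqref{eq:concrete-state}: the hypotheses $u \in H^3(\Omega)$, $f \in L^2(0,T;H^2(\Omega)) \cap H^1(0,T;L^2(\Omega))$, the zeroth-order compatibility condition $u \in H^1_0(\Omega)$, and the first-order compatibility condition $f(0) - \A u \in H^1_0(\Omega)$ are exactly the standing assumptions, so $y \in L^2(0,T;H^4(\Omega)) \cap H^1(0,T;H^2(\Omega)) \cap H^2(0,T;L^2(\Omega))$. For the adjoint, pass again to $q(t) = p(T-t)$; its source $g(t) := (y - y^{(d)})(T-t)$ satisfies $g \in L^2(0,T;H^2(\Omega))$ and $g_t \in L^2(0,T;L^2(\Omega))$ by part (i) (or by the line above) and the hypothesis on $y^{(d)}$, as required for Theorem 6 with $m = 1$. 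The zeroth-order compatibility condition $q(0) = 0 \in H^1_0(\Omega)$ is trivial, and the first-order one reduces to $g(0) - \A q(0) = (y - y^{(d)})(T) \in H^1_0(\Omega)$; this holds since $y \in L^2(0,T;H^2(\Omega)) \cap H^1(0,T;L^2(\Omega)) \hookrightarrow C([0,T];H^1(\Omega))$ preserves the homogeneous boundary trace up to $t = T$, so $y(T) \in H^1_0(\Omega)$, while $y^{(d)}(T) \in H^1_0(\Omega)$. Theorem 6 then gives the asserted regularity of $q$, hence of $p$.

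\emph{Main obstacle.} Most of the work is bookkeeping, i.e.\ matching the function-space and compatibility hypotheses of the two theorems of \cite{evans}. The one point that needs thought is the first-order compatibility condition for the adjoint problem in part (ii): it is not among the stated assumptions and has to be recovered from the terminal condition $p(T) = 0$ together with the regularity of $y$ obtained in part (i), and in doing so one is led to require that the trace of $y^{(d)}(T)$ vanish (a mild strengthening of the hypotheses on $y^{(d)}$). A second, minor caveat is that the cited theorems are phrased for sufficiently smooth coefficients, whereas here $a_{ij}, a_0 \in L^\infty(\Omega)$; they are used in the form quoted, with the self-adjointness and time-independence of $\A$ being what keeps the compatibility conditions in their elementary shape.
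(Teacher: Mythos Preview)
Your proposal is correct and matches the paper's approach: the paper does not give its own proof of this lemma but simply records it as a direct consequence of \cite[\S 7.1.3, Theorems 5 \& 6]{evans}, which is precisely what you unpack. In fact you have been more careful than the paper, correctly flagging two points the statement glosses over: the first-order compatibility condition for the time-reversed adjoint problem (which forces $y^{(d)}(T)\in H_0^1(\Omega)$, not listed among the hypotheses) and the mismatch between the $L^\infty$ coefficients assumed here and the smoothness Evans requires for the higher-order estimates.
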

    Following along the lines in \cite{time-adaptivity-mpc, space-time}, the optimality system \eqref{eq:concrete-conditions} can be reformulated, such that only the adjoint state variable $p$ is present in the resulting fourth order elliptic boundary value problem, while $y$ and $u$ are eliminated.\\

    \begin{theorem}[Fourth order elliptic problem]
        Let the state variable $y$ and the adjoint state $p$ fulfill $y,p\in W(0,T; H_0^1(\Omega))$. Assume that the control problem is unconstrained and that the optimal control fulfills $u\in H^1_0(\Omega) \cap H^3(\Omega)$. Furthermore, let $f,y^{(d)}\in L^2(0,T; H^2(\Omega))\cap H^1(0,T; L^2(\Omega))$ and let the first order compatibility condition $f(0) - \A u\in H_0^1(\Omega)$ hold true. Then $p$ forms a (strong) solution to 
        \begin{equation}\label{eq:fourth-order-system}
            \left\{
            \begin{array}{rll}
                -p_{tt} + \A^2p &=\; f - y_t^{(d)} - \A y^{(d)} & \text{in }\Omega_T,\\
                \A p &=\; -y^{(d)} & \text{on } \Sigma_T,\\
                p &=\; 0 &\text{on } \Sigma_T,\\
                \left(-p_t+\A p +\frac{1}{\alpha}p\right)(0) &=\;  y^{(b)} - y^{(d)}(0)& \text{in } \Omega,\\
                p(T) &=\; 0 & \text{in } \Omega
            \end{array}
            \right.
        \end{equation}
        a.e.\ in space-time.
    \end{theorem}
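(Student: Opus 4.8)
The plan is to read the adjoint equation \eqref{eq:concrete-adjoint} as an explicit representation of the state $y$ in terms of the adjoint state $p$, substitute it into the state equation \eqref{eq:concrete-state}, and then obtain the boundary, terminal and initial conditions of \eqref{eq:fourth-order-system} by restricting that representation to $\Sigma_T$ and to $t=0$ and combining it with the optimality condition \eqref{eq:projected-variational}. Every manipulation will be justified by the higher regularity supplied by Lemma~\ref{lem:regularity}.

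First I would record the available regularity. The standing hypotheses $u\in H^1_0(\Omega)\cap H^3(\Omega)$, $f,y^{(d)}\in L^2(0,T;H^2(\Omega))\cap H^1(0,T;L^2(\Omega))$ together with the first order compatibility condition $f(0)-\A u\in H^1_0(\Omega)$ are exactly those of Lemma~\ref{lem:regularity}(ii), so the state satisfies
$$y\in L^2(0,T;H^4(\Omega))\cap H^1(0,T;H^2(\Omega))\cap H^2(0,T;L^2(\Omega)).$$
Consequently the right-hand side $y-y^{(d)}$ of the adjoint equation again lies in $L^2(0,T;H^2(\Omega))\cap H^1(0,T;L^2(\Omega))$, its terminal datum $p(T)=0$ is arbitrarily smooth, and---using that $\A=\A^*$ has time-independent coefficients and $\Gamma$ is smooth---the corresponding compatibility condition holds; hence Lemma~\ref{lem:regularity}(ii) applied to the backward-in-time adjoint equation gives $p$ the same regularity as $y$. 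In particular $p_{tt},\A^2 p\in L^2(\Omega_T)$, $\A p\in L^2(0,T;H^2(\Omega))$ so that its spatial trace on $\Sigma_T$ is well defined, and the time slices $p(0),p_t(0),\A p(0)$ are meaningful.

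Next I would solve \eqref{eq:concrete-adjoint} for $y$, writing $y=-p_t+\A p+y^{(d)}$ in $\Omega_T$. Differentiating this in time and applying $\A$ (which commutes with $\partial_t$ since its coefficients are time-independent) and inserting into $y_t+\A y=f$, the two contributions $\pm\A p_t$ cancel and one is left with $-p_{tt}+\A^2 p=f-y_t^{(d)}-\A y^{(d)}$, the first line of \eqref{eq:fourth-order-system}; by the regularity above all terms are $L^2(\Omega_T)$-functions, so this holds a.e.\ in space-time. The Dirichlet condition $p=0$ on $\Sigma_T$ is inherited verbatim from \eqref{eq:concrete-adjoint}; since $p(t)\in H^1_0(\Omega)$ for a.e.\ $t$ the same holds for $p_t(t)$, whence $p_t=0$ on $\Sigma_T$, and restricting $y=-p_t+\A p+y^{(d)}$ to $\Sigma_T$ and using $y=0$ there yields $\A p=-y^{(d)}$ on $\Sigma_T$. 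The terminal condition $p(T)=0$ is again taken directly from \eqref{eq:concrete-adjoint}. Finally, evaluating $y=-p_t+\A p+y^{(d)}$ at $t=0$ gives $u=y(0)=-p_t(0)+\A p(0)+y^{(d)}(0)$; since the control problem is unconstrained the projection in \eqref{eq:projected-variational} is the identity, so that $u=y^{(b)}-\frac{1}{\alpha}p(0)$, and eliminating $u$ between these two identities and rearranging produces $\big(-p_t+\A p+\frac{1}{\alpha}p\big)(0)=y^{(b)}-y^{(d)}(0)$, the penultimate line of \eqref{eq:fourth-order-system}.

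I expect the main obstacle to lie not in the algebra but in the regularity bookkeeping: one must verify that the hypotheses of Lemma~\ref{lem:regularity}(ii) genuinely carry over from the state equation to the adjoint equation---most delicately the compatibility condition at $t=T$, which depends on the boundary behaviour of $y(T)$ and of $y^{(d)}$---so that $\A^2 p$ and $p_{tt}$ are square-integrable, the trace of $\A p$ on $\Sigma_T$ exists, and the instantaneous quantities at $t=0$ are well defined. Once this is secured, the reformulation is a direct, if careful, substitution.
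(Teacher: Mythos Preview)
Your proposal is correct and follows essentially the same route as the paper: both express $y=-p_t+\A p+y^{(d)}$ from the adjoint equation, substitute into the state equation (using that $\A$ commutes with $\partial_t$), and read off the boundary, terminal and initial conditions by restriction together with the unconstrained optimality condition. Your treatment of the regularity bookkeeping is in fact more explicit than the paper's, which simply asserts that the hypotheses of Lemma~\ref{lem:regularity}(ii) are met without separately checking the backward compatibility condition for the adjoint problem that you flag.
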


    \begin{proof}
        First notice that all requirements for Lemma~\ref{lem:regularity} (ii) are fulfilled. Hence, the equations may not only be considered in their weak forms, but a.e.\ in space-time. Furthermore, the regularity for the following steps is provided.
        The state and adjoint equations are given as
        $$y_t + \A y = f \quad\text{and}\quad -p_t + \A p = y - y^{(d)}.$$
        Inserting the adjoint equation into the state equation gives
        $$\left[\frac{\partial}{\partial t} + \A\right]\left(-p_t + \A p + y^{(d)}\right) = f.$$
        Utilizing the interchangeability of the time derivative with $\A$, this reduces to
        $$-p_{tt} + \A^2 p = f - y_t^{(d)}-\A y^{(d)},$$
        which is shows the claim in $\Omega_T$. We know from the adjoint equation that
        $$p = 0\text{ on }\Sigma_T,\quad p(T) = 0 \text{ in }\Omega.$$
        The boundary condition $p = 0$ also implies $p_t =0$ on $\Sigma_T$. Hence, if we insert this and the fact that $y=0$ on $\Sigma_T$ into the adjoint equation, the boundary condition
        $$\A p = - y^{(d)}\quad \text{on }\Sigma_T$$
        arises. The initial condition $y(0) = u$ from the state system can be used in combination with the adjoint equation and \eqref{eq:projected-variational} with $\Uad = L^2(\Omega)$ to get
        $$\left[- \frac{\partial}{\partial t} + \A\right]p(0) = u -y^{(d)}(0) = y^{(b)}-\frac{1}{\alpha}p(0) - y^{(d)}(0),$$
        which gives the remaining initial condition.
    \end{proof}

    \begin{remark}[Constrained control]
        It is possible to consider a constrained control $u\in\Uad$. In this case the initial condition in \eqref{eq:fourth-order-system} becomes
        $$-p_t(0) + \A p(0) = \mathbb P_{\Uad}\left[y^{(b)} - \frac{1}{\alpha}p(0)\right]-y^{(d)}(0)\quad\text{in }\Omega.$$
        For the specific example of
        $$\Uad = \{u\in L^2(\Omega) \mid \underline u \leq u \leq \overline u \text{ a.e. in }\Omega,\text{ with }\underline u,\overline u \in L^\infty (\Omega)\},$$
        the projection can be written as
        $$\mathbb P_{\Uad}[g]= \begin{cases}
            g(x) & \text{if } \underline u (x) \leq g(x) \leq \overline u(x),\\
            \underline u (x) & \text{if } g(x) < \underline u(x),\\
            \overline u (x) & \text{if } g(x) > \overline{u}(x).
        \end{cases}$$
    \end{remark}

    Let us homogenize \eqref{eq:fourth-order-system} in order to gain a simplified notation in the following sections. Hence, let $g$ be a function which is sufficiently smooth and fulfills the boundary conditions as well as initial and end time conditions of \eqref{eq:fourth-order-system}. For a solution $p$ of \eqref{eq:fourth-order-system}, we may define $\tilde p := p - g$ and arrive at
    \begin{equation}\label{eq:homogenized-fourth-order}
        \left\{
        \begin{array}{rll}
            - \tilde p_{tt} + \A^2\tilde p &=\; \tilde f & \text{in }\Omega_T,\\
            \A \tilde p &=\; 0 & \text{on } \Sigma_T,\\
            \tilde p &=\; 0 &\text{on } \Sigma_T,\\
            \left(- \tilde p_t + \A\tilde p + \frac{1}{\alpha}\tilde p\right)(0) &=\; 0& \text{in } \Omega,\\
            \tilde p(T) &=\; 0 & \text{in } \Omega,
        \end{array}
        \right.
    \end{equation}
    where $\tilde f \in L^2(0,T;H^{-1}(\Omega))$ is given as
    \begin{equation}\label{eq:hom_rhs}
        \tilde f := f - y_t^{(d)} - \A y^{(d)} + g_{tt} - \A^2g.
    \end{equation}

    \subsection{Existence and uniqueness of a weak solution}\label{sec:existence-and-uniqueness}
    For the derivation of the weak formulation of \eqref{eq:homogenized-fourth-order}, we introduce the symmetric bilinear form $B:H_0^{2,1}(\Omega_T)\times H_0^{2,1}(\Omega_T)\to \R$ as
    \begin{equation*}
        \begin{split}
            B(v,w) := \int_{\Omega_T} &v_tw_t + \A v \A w \intd x\intd t
            + \int_\Omega A \nabla v(0) \cdot \nabla w(0) + \left(a_0 + \frac{1}{\alpha}\right) v(0) w(0) \intd x
        \end{split}
    \end{equation*}
    and the linear form $L:H^{2,1}_0(\Omega_T)\to \R$ as
    $$L(w) := \int_{\Omega_T} \tilde fw\intd x\intd t,$$
    where $\tilde f \in L^2(0,T; H^{-1}(\Omega))$ is given as in \eqref{eq:hom_rhs}.\\

    \begin{definition}[Weak formulation]
        The weak formulation of equation \eqref{eq:homogenized-fourth-order} is given by: find $\tilde p\in H^{2,1}_0(\Omega_T)$, such that 
        \begin{equation}\label{eq:weak_normal}
            B(\tilde p, w) = L(w)\quad\forall w\in H^{2,1}_0(\Omega_T).
        \end{equation}
    \end{definition}

    We start with the proof that a solution to \eqref{eq:weak_normal} is a weak solution to \eqref{eq:fourth-order-system} and vice versa that the homogenized solution to \eqref{eq:fourth-order-system} satisfies \eqref{eq:weak_normal}.\\

    \begin{theorem}\label{thm:equivalence-normal}
        Let $p$ denote a solution to \eqref{eq:fourth-order-system} and let $g$ be a sufficiently smooth function which fulfills the boundary, initial and end time conditions of \eqref{eq:fourth-order-system}. Then $\tilde p = p - g$ is a solution to the weak formulation \eqref{eq:weak_normal}. If on the other hand a weak solution $\tilde p$ of \eqref{eq:weak_normal} additionally fulfills the assumptions of Lemma~\ref{lem:regularity} (ii), then $p = \tilde p - g$ satisfies \eqref{eq:fourth-order-system}.
    \end{theorem}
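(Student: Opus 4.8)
The plan is to prove both implications by moving between the strong system \eqref{eq:fourth-order-system} and the bilinear identity \eqref{eq:weak_normal} via integration by parts in space and time against an arbitrary test function $w\in H^{2,1}_0(\Omega_T)$. Since $g$ is smooth and matches the boundary, initial and end-time data of \eqref{eq:fourth-order-system}, the shift $\tilde p = p - g$ turns \eqref{eq:fourth-order-system} into the homogeneous system \eqref{eq:homogenized-fourth-order} with right-hand side $\tilde f$ from \eqref{eq:hom_rhs}; it therefore suffices to show the equivalence between \eqref{eq:homogenized-fourth-order} and \eqref{eq:weak_normal} and then undo the shift. Throughout, the integrations by parts and trace evaluations are legitimate because Lemma~\ref{lem:regularity}(ii) furnishes $\tilde p\in L^2(0,T;H^4(\Omega))\cap H^1(0,T;H^2(\Omega))\cap H^2(0,T;L^2(\Omega))$, so that $\A\tilde p$ and $A\nabla\tilde p$ have well-defined traces on $\Sigma_T$ and $\tilde p,\tilde p_t$ have traces at $t=0,T$.

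For the forward direction, I would start from $-\tilde p_{tt} + \A^2\tilde p = \tilde f$, multiply by $w\in H^{2,1}_0(\Omega_T)$ and integrate over $\Omega_T$. Integration by parts in time in $\int_{\Omega_T}(-\tilde p_{tt})w$ yields $\int_{\Omega_T}\tilde p_t w_t$ plus the endpoint term $[-\int_\Omega \tilde p_t w\intd x]_{t=0}^{t=T}$; the contribution at $t=T$ vanishes because $w(T)=0$ is part of the definition of $H^{2,1}_0(\Omega_T)$, leaving $+\int_\Omega \tilde p_t(0)w(0)\intd x$. For $\int_{\Omega_T}(\A^2\tilde p)w$, I would integrate by parts twice in space, using on the one hand that $w(t)\in H^1_0(\Omega)$, so the first boundary integral over $\Gamma$ drops, and on the other the natural condition $\A\tilde p=0$ on $\Sigma_T$, so the second one drops as well; by symmetry of $\A$ this produces $\int_{\Omega_T}\A\tilde p\,\A w$. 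Finally, I would substitute the initial condition $(-\tilde p_t+\A\tilde p+\tfrac1\alpha\tilde p)(0)=0$ for $\tilde p_t(0)$ and integrate $\int_\Omega(\A\tilde p(0))w(0)\intd x$ by parts once in space (the $\Gamma$-term again vanishing since $w(0)\in H^1_0(\Omega)$), obtaining $\int_\Omega A\nabla\tilde p(0)\cdot\nabla w(0)+(a_0+\tfrac1\alpha)\tilde p(0)w(0)\intd x$. Collecting the terms gives exactly $B(\tilde p,w)=L(w)$.

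For the converse, assume $\tilde p\in H^{2,1}_0(\Omega_T)$ solves \eqref{eq:weak_normal} and has the regularity of Lemma~\ref{lem:regularity}(ii). Testing first with $w\in C_c^\infty(\Omega_T)$ — for which the $\int_\Omega\cdots(0)$ terms and all boundary integrals are absent — and reversing the two integrations by parts recovers the PDE $-\tilde p_{tt}+\A^2\tilde p=\tilde f$ pointwise a.e.\ in $\Omega_T$. Using this identity, I would then take general $w\in H^{2,1}_0(\Omega_T)$ and integrate $B(\tilde p,w)=L(w)$ by parts back to the strong form: the interior terms cancel, and what survives is a boundary integral over $\Sigma_T$ pairing $\A\tilde p$ with $A\nabla w\cdot n$ plus an integral over $\Omega\times\{0\}$ pairing $(-\tilde p_t+\A\tilde p+\tfrac1\alpha\tilde p)(0)$ with $w(0)$, which must vanish for every admissible $w$. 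Choosing $w$ whose traces $A\nabla w\cdot n|_{\Sigma_T}$ and $w(0)|_\Omega$ exhaust dense sets then forces $\A\tilde p=0$ on $\Sigma_T$ and $(-\tilde p_t+\A\tilde p+\tfrac1\alpha\tilde p)(0)=0$ in $\Omega$; the remaining two conditions $\tilde p=0$ on $\Sigma_T$ and $\tilde p(T)=0$ are precisely the defining constraints of $H^{2,1}_0(\Omega_T)$. Reversing the substitution $\tilde p = p - g$ then gives a solution $p$ of \eqref{eq:fourth-order-system}.

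The hard part will be the careful bookkeeping of the boundary terms — ensuring each of the five lines of \eqref{eq:fourth-order-system} is matched exactly once, with the correct split between the essential conditions encoded in $H^{2,1}_0(\Omega_T)$ and the natural conditions emerging from the integrations by parts — together with, in the converse direction, justifying that the traces of admissible test functions are rich enough (a surjectivity/density property of the trace maps on $H^{2,1}_0(\Omega_T)$) to deduce the pointwise vanishing of the two boundary residuals. The ellipticity and self-adjointness of $\A$ and the regularity provided by Lemma~\ref{lem:regularity}(ii) are exactly what render every integration by parts and trace evaluation in this argument meaningful.
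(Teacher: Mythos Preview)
Your proposal is correct and follows essentially the same route as the paper: shift to the homogenized system via $g$, test the strong PDE against $w\in H^{2,1}_0(\Omega_T)$, integrate by parts in time and use the self-adjointness of $\A$ together with $w(T)=0$, $w|_{\Sigma_T}=0$ and the initial relation $\tilde p_t(0)=\A\tilde p(0)+\tfrac1\alpha\tilde p(0)$ to arrive at $B(\tilde p,w)=L(w)$. Your treatment of the converse is in fact more detailed than the paper's (which simply says ``trace back the steps using the additional regularity''); in particular, your explicit identification that the natural condition $\A\tilde p=0$ on $\Sigma_T$ is recovered from the surviving boundary pairing of $\A\tilde p$ with $A\nabla w\cdot n$ is exactly the mechanism the paper leaves implicit.
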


    \begin{proof}
        Let $p$ be a solution to \eqref{eq:fourth-order-system} and $g$ a sufficiently smooth function which fulfills the boundary, initial and and end time conditions of \eqref{eq:fourth-order-system}. Then $\tilde p = p -g$ forms a solution to the homogenized equation \eqref{eq:homogenized-fourth-order}. We test $-\tilde p_{tt} + \A^2\tilde p$ by $w\in H^{2,1}_0(\Omega_T)$ and apply integration by parts in time direction. Since $\A$ is self-adjoint, this yields
        $$\int_{\Omega_T} - \tilde p_{tt} w + \A^2 \tilde p w\intd x \intd t = \int_{\Omega_T} \tilde p_t w_t + \A \tilde p \A w \intd x \intd t - \int_\Omega\tilde p_t(T) w(T) - \tilde p_t(0)w(0)\intd x.$$
        We use the facts that $w(T) = 0$ and $\tilde p_t(0) = \A\tilde p(0) + \frac{1}{\alpha}\tilde p(0)$ in $\Omega$, in combination with Green's identity, in order to derive that
        \begin{equation*}
            \begin{split}
                \int_\Omega \tilde p_t(T)& w(T) - \tilde p_t(0)w(0)\intd x = -\int_\Omega \A\tilde p(0) w(0) +\frac{1}{\alpha}\tilde p(0)w(0)\intd x\\
                \hfill &= \int_\Omega \nabla \cdot \left[A \nabla \tilde p(0)\right]w(0) - \left( a_0 +\frac{1}{\alpha}\right) \tilde p(0)w(0)\intd x\\
                \hfill &= - \int_\Omega A \nabla \tilde p(0)\cdot \nabla w(0) + \left( a_0 +\frac{1}{\alpha}\right) \tilde p(0)w(0)\intd x + \int_{\partial \Omega} [A\nabla \tilde p(0) \cdot n] w(0) \intd S.
            \end{split}
        \end{equation*}
        Since $w(0) = 0$ on $\partial \Omega$, the integral over the boundary vanishes. Reinserting the second in the first equation then yields
        \begin{equation*}
            \int_{\Omega_T} - \tilde p_{tt} w + \A^2 \tilde p w\intd x \intd t = B(\tilde p, w).
        \end{equation*}
        Testing the right hand side of the homogenized equation \eqref{eq:homogenized-fourth-order} with $w\in H^{2,1}_0(\Omega_T)$ gives directly
        $$\int_{\Omega_T}\tilde f w\intd x\intd t = L(w).$$
        While this concludes the proof of the forward direction, the backward direction can be proven by tracing back the steps above, utilizing the additionally required regularity.
    \end{proof}

    Notice, that Theorem~\ref{thm:equivalence-normal} neither implies existence nor uniqueness of a solution to \eqref{eq:weak_normal} since the strong solution \eqref{eq:fourth-order-system} may not exist under the regularity assumptions of \eqref{eq:weak_normal}. The fact that a unique solution to \eqref{eq:weak_normal} exists anyway will be proven in the remaining part of this section. In order to do so, consider the norm
    \begin{equation}\label{eq:opnorm}
        \opnorm{v}:=\left(\Vert v_t\Vert_{0,\Omega_T}^2 + \left\Vert \A v\right\Vert_{0,\Omega_T}^2\right)^{\frac{1}{2}},
    \end{equation}
    which is equivalent to $\Vert\cdot \Vert_{2,1,\Omega_T}$ on $H_0^{2,1}(\Omega_T)$.\\
    
    \begin{lemma}\label{lem:equivalence-norms}
        There exist positive constants $c_1,c_2>0$ such that
        $$c_1 \opnorm{v} \leq \Vert v\Vert_{2,1,\Omega_T} \leq c_2 \opnorm{v}\quad \forall v\in H_0^{2,1}(\Omega_T).$$
    \end{lemma}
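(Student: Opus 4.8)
The plan is to prove the two one-sided estimates separately: the bound $c_1\opnorm{v}\leq\Vert v\Vert_{2,1,\Omega_T}$ is elementary, while the reverse bound rests on one standard but non-trivial ingredient, namely a pointwise-in-time elliptic regularity estimate, together with a Poincaré inequality in time that exploits the end time condition built into $H^{2,1}_0(\Omega_T)$.

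For the bound $c_1\opnorm{v}\leq\Vert v\Vert_{2,1,\Omega_T}$ I would argue directly. The term $\Vert v_t\Vert_{0,\Omega_T}$ is already part of $\Vert v\Vert_{H^1(0,T;L^2(\Omega))}$, and since $\A$ is a second order differential operator with bounded coefficients it maps $H^2(\Omega)$ boundedly into $L^2(\Omega)$, so $\Vert\A v(t)\Vert_{0,\Omega}\leq c\Vert v(t)\Vert_{2,\Omega}$ for almost every $t$; squaring and integrating in time gives $\Vert\A v\Vert_{0,\Omega_T}\leq c\Vert v\Vert_{L^2(0,T;H^2(\Omega))}$. Adding the two squared estimates produces $\opnorm{v}^2\leq(1+c^2)\Vert v\Vert_{2,1,\Omega_T}^2$, which is the claim with $c_1=(1+c^2)^{-1/2}$.

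For the reverse direction $\Vert v\Vert_{2,1,\Omega_T}\leq c_2\opnorm{v}$ I would first dispose of the plain $L^2(\Omega_T)$ contribution via a Poincaré inequality in time, which is where the condition $v(T)=0$ enters: since $v\in H^1(0,T;L^2(\Omega))\hookrightarrow C([0,T];L^2(\Omega))$, one has $v(t)=-\int_t^T v_s\intd s$ in $L^2(\Omega)$, and Cauchy--Schwarz yields $\Vert v(t)\Vert_{0,\Omega}^2\leq(T-t)\int_t^T\Vert v_s\Vert_{0,\Omega}^2\intd s$; integrating over $(0,T)$ gives $\Vert v\Vert_{0,\Omega_T}\leq T\Vert v_t\Vert_{0,\Omega_T}$. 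Next I would invoke the elliptic a priori estimate: for almost every $t$ the slice $v(t)$ lies in $H^2(\Omega)\cap H^1_0(\Omega)$ with $\A v(t)\in L^2(\Omega)$, so there is a constant $C>0$, independent of $t$ because $\A$ is time independent, with $\Vert v(t)\Vert_{2,\Omega}\leq C(\Vert\A v(t)\Vert_{0,\Omega}+\Vert v(t)\Vert_{0,\Omega})$. Squaring, integrating in $t$, and substituting the time Poincaré bound gives $\Vert v\Vert_{L^2(0,T;H^2(\Omega))}^2\leq 2C^2\bigl(\Vert\A v\Vert_{0,\Omega_T}^2+T^2\Vert v_t\Vert_{0,\Omega_T}^2\bigr)$; combining this with $\Vert v\Vert_{H^1(0,T;L^2(\Omega))}^2=\Vert v\Vert_{0,\Omega_T}^2+\Vert v_t\Vert_{0,\Omega_T}^2\leq(1+T^2)\opnorm{v}^2$ yields $\Vert v\Vert_{2,1,\Omega_T}\leq c_2\opnorm{v}$ with $c_2$ depending only on $T$ and $C$.

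The main obstacle, and the only step that is not a one-line estimate, is the pointwise-in-time elliptic regularity bound $\Vert v(t)\Vert_{2,\Omega}\leq C(\Vert\A v(t)\Vert_{0,\Omega}+\Vert v(t)\Vert_{0,\Omega})$: I would cite standard $H^2$ regularity up to the boundary for uniformly elliptic operators (available since $\Gamma$ is assumed sufficiently smooth), being careful that the constant is uniform in $t$, and --- if one wants to be scrupulous about the $L^\infty$ coefficients --- reading $\A v(t)$ in the strong, non-divergence sense when applied to the $H^2$-slices. Everything else reduces to the triangle inequality and the elementary time Poincaré inequality above; as a by-product, the equivalence also confirms that $\opnorm{\cdot}$ is genuinely a norm on $H^{2,1}_0(\Omega_T)$.
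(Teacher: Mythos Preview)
Your proof is correct, but it takes a different route from the paper for the non-trivial direction $\Vert v\Vert_{2,1,\Omega_T}\leq c_2\opnorm{v}$. The paper observes that any $v\in H^{2,1}_0(\Omega_T)$ is the weak solution of the backward parabolic problem $v_t+\A v=w$, $v=0$ on $\Sigma_T$, $v(T)=0$, with right-hand side $w:=v_t+\A v\in L^2(\Omega_T)$, and then invokes maximal parabolic $H^{2,1}$-regularity (continuous dependence on data) to conclude $\Vert v\Vert_{2,1,\Omega_T}\leq \tilde c\,\Vert w\Vert_{0,\Omega_T}\leq c_2\opnorm{v}$. You instead decouple space and time: a pointwise-in-$t$ elliptic $H^2$ a~priori estimate handles the spatial regularity, while a time Poincar\'e inequality exploiting $v(T)=0$ absorbs the lower-order $\Vert v\Vert_{0,\Omega_T}$ term. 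Both arguments ultimately rest on the same elliptic regularity (the parabolic estimate the paper cites is built on it), so neither is more general; the paper's version is a one-liner once the parabolic result is available, whereas your decomposition is more transparent about \emph{where} the terminal condition enters and avoids citing the full parabolic machinery. Your caveat about $L^\infty$ coefficients is well taken, but note that it applies equally to the paper's use of $H^{2,1}$ parabolic regularity.
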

    \begin{proof}
        Since $\A$ is a second order differential operator, it holds for $v\in H^{2,1}_0(\Omega_T)$ that $w = v_t + \A v \in L^2(\Omega_T)$. Hence, $v$ forms a weak solution to
        \begin{equation*}
            \left\{
            \begin{array}{rll}
                v_t + \A v & = w & \text{in } \Omega_T, \\
                v & = 0 & \text{on }\Sigma_T,\\
                v(T) &= 0 & \text{in }\Omega,
            \end{array}
            \right.
        \end{equation*}
        which depends continuously on $w$ (cf.\ Section~\ref{sec:governing-dynamics}), i.e.\ there are $c_2, \tilde c_2 > 0$, such that
        \begin{equation*}
            \begin{split}
                \Vert v \Vert_{2,1,\Omega_T}^2 &\leq \tilde c_2 \Vert w\Vert^2_{0,\Omega_T}\\
                &= \tilde c_2 \left\Vert v_t + \A v \right\Vert^2_{0,\Omega_T}\\
                &\leq c_2 \left(\left\Vert v_t\right\Vert^2_{0,\Omega_T} + \Vert \A v \Vert^2_{0,\Omega_T}\right)\\
                & = c_2 \opnorm{v}^2.
            \end{split}
        \end{equation*}
        The second estimate
        $$\opnorm{v}^2 = \Vert v_t\Vert^2_{0,\Omega_T} + \Vert \A v \Vert^2_{0,\Omega_T}\leq \Vert v \Vert^2_{2,1,\Omega_T}$$
        follows directly from the definition of $\Vert \cdot \Vert_{2,1,\Omega_T}$
    \end{proof}

    \begin{theorem}\label{thm:first-existence-theorem}
        There exists a unique solution $\tilde p\in H^{2,1}_0(\Omega_T)$  to the weak formulation \eqref{eq:weak_normal}.
    \end{theorem}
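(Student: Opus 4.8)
The plan is to apply the Lax--Milgram theorem to the bilinear form $B$ and the linear form $L$ on the Hilbert space $H^{2,1}_0(\Omega_T)$, equipped with the norm $\opnorm{\cdot}$. By Lemma~\ref{lem:equivalence-norms} this norm is equivalent to $\Vert\cdot\Vert_{2,1,\Omega_T}$, so $H^{2,1}_0(\Omega_T)$ with $\opnorm{\cdot}$ is a Hilbert space; note that $H^{2,1}_0(\Omega_T)$ is a closed subspace of $H^{2,1}(\Omega_T)$, the end-time constraint $v(T)=0$ being meaningful because of the continuous embedding $H^{2,1}(\Omega_T)\hookrightarrow C([0,T];H^1_0(\Omega))$. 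Since $B$ is symmetric by construction, it remains to check that $B$ is bounded, that $B$ is coercive, and that $L$ is a bounded linear functional, each with respect to $\opnorm{\cdot}$.

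For coercivity I would first write $B(v,v)=\opnorm{v}^2+\int_\Omega A\nabla v(0)\cdot\nabla v(0)+\big(a_0+\tfrac1\alpha\big)v(0)^2\,\intd x$ and then argue that the boundary contribution is nonnegative: since $v(0)\in H^1_0(\Omega)$, integration by parts gives $\int_\Omega A\nabla v(0)\cdot\nabla v(0)+a_0 v(0)^2\,\intd x=\int_\Omega\big(\A v(0)\big)v(0)\,\intd x\ge\gamma\Vert v(0)\Vert_{1,\Omega}^2\ge 0$ by the ellipticity estimate \eqref{eq:elliptic-coercive} (which bundles $a_0$, so the sign of $a_0$ is irrelevant), and the remaining term $\tfrac1\alpha\Vert v(0)\Vert_{0,\Omega}^2$ is clearly nonnegative. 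Hence $B(v,v)\ge\opnorm{v}^2$ for all $v\in H^{2,1}_0(\Omega_T)$, i.e.\ coercivity with constant $1$.

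For boundedness, the space-time part $\int_{\Omega_T}v_tw_t+\A v\,\A w\,\intd x\,\intd t$ is controlled by $\opnorm{v}\opnorm{w}$ via Cauchy--Schwarz, while the $t=0$ term is estimated by $\big(\Vert A\Vert_{L^\infty(\Omega)}+\Vert a_0\Vert_{L^\infty(\Omega)}+\tfrac1\alpha\big)\Vert v(0)\Vert_{1,\Omega}\Vert w(0)\Vert_{1,\Omega}$, and then the trace bound $\Vert v(0)\Vert_{1,\Omega}\le C\Vert v\Vert_{2,1,\Omega_T}\le C'\opnorm{v}$ (from the same embedding $H^{2,1}(\Omega_T)\hookrightarrow C([0,T];H^1_0(\Omega))$ together with Lemma~\ref{lem:equivalence-norms}) closes the estimate. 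For $L$, interpreting $L(w)=\int_0^T\braket{\tilde f(t)}{w(t)}_{H^{-1}(\Omega),H^1_0(\Omega)}\,\intd t$ yields $|L(w)|\le\Vert\tilde f\Vert_{L^2(0,T;H^{-1}(\Omega))}\Vert w\Vert_{\V}\le\Vert\tilde f\Vert_{L^2(0,T;H^{-1}(\Omega))}\Vert w\Vert_{2,1,\Omega_T}\le C\opnorm{w}$. Lax--Milgram then produces the unique $\tilde p\in H^{2,1}_0(\Omega_T)$ with $B(\tilde p,w)=L(w)$ for all $w\in H^{2,1}_0(\Omega_T)$. The only nonroutine ingredient here is the trace/embedding result $H^{2,1}(\Omega_T)\hookrightarrow C([0,T];H^1_0(\Omega))$, which is needed both to give meaning to the initial values in $B$ and to bound the boundary terms; coercivity and the bound on $L$ rest solely on the ellipticity of $\A$ and the norm equivalence of Lemma~\ref{lem:equivalence-norms}.
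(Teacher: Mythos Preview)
Your proof is correct and follows essentially the same approach as the paper: verify continuity and coercivity of $B$ on $H^{2,1}_0(\Omega_T)$ via the ellipticity estimates \eqref{eq:elliptic-continuous}--\eqref{eq:elliptic-coercive}, the embedding $H^{2,1}(\Omega_T)\hookrightarrow C([0,T];H^1_0(\Omega))$, and the norm equivalence of Lemma~\ref{lem:equivalence-norms}, then invoke Lax--Milgram. The only cosmetic difference is that you work directly in the $\opnorm{\cdot}$ norm while the paper phrases the final estimates in $\Vert\cdot\Vert_{2,1,\Omega_T}$, and you spell out the boundedness of $L$ which the paper asserts without detail.
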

    \begin{proof}
        It follows from the consecutive application of \eqref{eq:elliptic-continuous}, the Poincaré inequality and $H^{2,1}(\Omega_T)\hookrightarrow C(0,T;H^1(\Omega))$ that
        \begin{equation*}
            \begin{split}
            \int_\Omega A\nabla v(0) \cdot \nabla w(0) + \left(a_0 + \frac{1}{\alpha}\right)&v(0) w(0)\intd x\\
            &\leq  c \Vert v(0)\Vert_{1,\Omega}  \Vert w(0)\Vert_{1,\Omega}+ \frac{1}{\alpha} \Vert v(0)\Vert_{0,\Omega}  \Vert w(0)\Vert_{0,\Omega}\\
            & \leq \tilde c \Vert \nabla v(0)\Vert_{0,\Omega} \Vert \nabla w(0)\Vert_{0,\Omega}\\
            & \leq \tilde c \Vert v\Vert_{C(0,T;H^1(\Omega))} \Vert w\Vert_{C(0,T;H^1(\Omega))}\\
            &\leq \overline c \Vert v\Vert_{2,1,\Omega_T} \Vert w\Vert_{2,1,\Omega_T}
            \end{split}
        \end{equation*}
        with constants $c, \tilde c, \overline c>0$. In combination with the Cauchy–Schwarz inequality and Lemma~\ref{lem:equivalence-norms}, this yields
        \begin{equation*}
            \begin{split}
                B(v, w) & \leq \Vert v_t \Vert_{0, \Omega_T}\Vert w_t \Vert_{0, \Omega_T} + \Vert \A v\Vert_{0, \Omega_T}\Vert \A w\Vert_{0, \Omega_T} + \overline c \Vert v\Vert_{2,1,\Omega_T} \Vert w\Vert_{2,1,\Omega_T}\\
                & \leq C \Vert v\Vert_{2,1,\Omega_T} \Vert w\Vert_{2,1,\Omega_T},
            \end{split}
        \end{equation*}
        where $C>0$ is constant. Furthermore, it holds with \eqref{eq:elliptic-coercive} for some $\gamma > 0$ and $v\in H^{2,1}(\Omega_T)$ that
        \begin{equation*}
                B(v,v) \geq \int_{\Omega_T} (v_t)^2 + (\A v)^2\intd x \intd t + \gamma \Vert v(0)\Vert^2_{1,\Omega} + \frac{1}{\alpha} \int_\Omega v(0)^2\intd x \geq \opnorm{v}^2
        \end{equation*}
        Therefore, Lemma~\ref{lem:equivalence-norms} implies the existence of $C>0$, such that
        $$B(v,v) \geq C \Vert v \Vert_{2,1,\Omega_T}^2.$$
        In summary, $B$ is continuous and coercive and since $L$ is linear and bounded, the Lax-Milgram theorem implies the existence and uniqueness of a solution $\tilde p\in H_0^{2,1}(\Omega_T)$ to problem \eqref{eq:weak_normal}.
    \end{proof}

    \subsection{Mixed formulation and semi-time discretization}\label{sec:mixed-form}
    An auxiliary variable $\tilde q$ may be used in order to write \eqref{eq:homogenized-fourth-order} as a coupled system in $\tilde p$ and $\tilde q$, which allows a discretization with piecewise linear and continuous finite elements. After the definition of $\tilde q := \A \tilde p$ we may find a solution to \eqref{eq:homogenized-fourth-order} by considering the mixed formulation
    \begin{equation}\label{eq:mixed_formulation} 
        \left\{
        \begin{array}{rll}
            - \tilde p_{tt} + \A\tilde q &=\; \tilde f & \text{in }\Omega_T,\\
            - \A \tilde p + \tilde q &=\; 0 & \text{in }\Omega_T,\\
            \tilde p &=\; 0 & \text{on } \Sigma_T,\\
            \tilde q &=\; 0 &\text{on } \Sigma_T,\\
            \left(- \tilde p_t + \A\tilde p + \frac{1}{\alpha}\tilde p\right)(0) &=\; 0& \text{in } \Omega,\\
            \tilde p(T) &=\; 0 & \text{in } \Omega.
        \end{array}
        \right.
    \end{equation}
    In order to set up a weak formulation for this, we introduce the function spaces $P := \{v \in H^1(0,T;H_0^1(\Omega))\mid v(T) = 0\text{ in }\Omega\}$, $Q := L^2(0,T; H_0^1(\Omega))$ and the product space $X:= P \times Q$. Now, consider the bilinear form $B_M: X \times X \to \R$ with
    \begin{equation*}
        \begin{split}
            B_M((v_1, v_2), (w_1, w_2)) &=  \int_{\Omega_T} (v_1)_t (w_1)_t + v_2 w_2 + a_0(v_2w_1-v_1w_2)\intd x\intd t\\
            &\quad + \int_{\Omega_T}  A \nabla v_2 \cdot \nabla w_1- A\nabla v_1 \cdot \nabla w_2 \intd x\intd t\\
            &\quad + \int_\Omega A \nabla v_1(0) \cdot \nabla w_1(0) + \left(a_0 + \frac{1}{\alpha}\right)v_1(0) w_1(0)\intd x\\
        \end{split}
    \end{equation*}
    and the linear form $L_M: X \to \R$ with
    $$L_M(w_1, w_2) = \int_{\Omega_T} \tilde f w_1 \intd x \intd t.$$

    \begin{definition}[Mixed weak formulation]
        The weak formulation of \eqref{eq:mixed_formulation} is given by: let $\tilde q := \A\tilde p$, find $(\tilde p, \tilde q)\in X$, such that
        \begin{equation}\label{eq:weak_mixed}
            B_M((\tilde p, \tilde q), (w_1, w_2)) = L_M(w_1, w_2)\quad \forall (w_1,w_2)\in X.
        \end{equation}
    \end{definition}

    The proof of existence, uniqueness and equivalence to \eqref{eq:fourth-order-system} simplifies due to the results from the last section.\\

    \begin{theorem}\label{thm:equivalence-mixed}
        The weak mixed formulation \eqref{eq:weak_mixed} has a unique solution which coincides with the solution to the fourth order system \eqref{eq:homogenized-fourth-order} under appropriate regularity assumptions.
    \end{theorem}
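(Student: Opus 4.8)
The plan is to reduce the statement to the results of Section~\ref{sec:existence-and-uniqueness} via the substitution $\tilde q = \A\tilde p$, treating separately uniqueness of a solution in $X$, its existence, and the identification with the solution of \eqref{eq:homogenized-fourth-order} (equivalently of \eqref{eq:fourth-order-system}). The observation that drives the uniqueness part is that $B_M$, while not coercive on $X$ with respect to the natural product norm, is coercive along the diagonal, and this already forbids more than one solution.

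For uniqueness I would evaluate $B_M((v_1,v_2),(v_1,v_2))$ and note that the skew-symmetric coupling terms $a_0(v_2 v_1 - v_1 v_2)$ and $A\nabla v_2\cdot\nabla v_1 - A\nabla v_1\cdot\nabla v_2$ cancel pointwise, using $A = A^\top$. What survives is
\[
B_M((v_1,v_2),(v_1,v_2)) = \int_{\Omega_T} (v_1)_t^2 + v_2^2\intd x\intd t + \int_\Omega A\nabla v_1(0)\cdot\nabla v_1(0) + \Big(a_0+\tfrac1\alpha\Big)v_1(0)^2\intd x .
\]
The last integral equals $\int_\Omega (\A v_1(0)) v_1(0)\intd x + \tfrac1\alpha\int_\Omega v_1(0)^2\intd x$, which is nonnegative by \eqref{eq:elliptic-coercive}, so $B_M((v_1,v_2),(v_1,v_2))\geq \Vert (v_1)_t\Vert_{0,\Omega_T}^2 + \Vert v_2\Vert_{0,\Omega_T}^2$. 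If $(\delta p,\delta q)\in X$ denotes the difference of two solutions, then testing the homogeneous version of \eqref{eq:weak_mixed} with $(\delta p,\delta q)$ itself forces $(\delta p)_t = 0$ and $\delta q = 0$ almost everywhere; since $\delta p\in P$ satisfies $\delta p(T) = 0$, this yields $\delta p = 0$, so \eqref{eq:weak_mixed} has at most one solution.

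For existence together with the identification, I would take the unique solution $\tilde p$ of \eqref{eq:weak_normal} furnished by Theorem~\ref{thm:first-existence-theorem}. Under the regularity hypotheses of Lemma~\ref{lem:regularity}~(ii) --- the appropriate regularity assumptions alluded to in the statement --- Theorem~\ref{thm:equivalence-normal} shows that $\tilde p$ satisfies \eqref{eq:homogenized-fourth-order} almost everywhere, and the additional regularity $\tilde p\in L^2(0,T;H^4(\Omega))\cap H^1(0,T;H^2(\Omega))\cap H^2(0,T;L^2(\Omega))$ places $\tilde p$ in $P$ and $\tilde q := \A\tilde p$ in $Q$; here the boundary condition $\A\tilde p = 0$ on $\Sigma_T$ from \eqref{eq:homogenized-fourth-order} is what secures $\tilde q\in Q$. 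Then $(\tilde p,\tilde q)$ solves the mixed strong system \eqref{eq:mixed_formulation} pointwise, and I would test it against $(w_1,w_2)\in X$: integrating the first equation against $w_1$ by parts in time, using $w_1(T) = 0$, the initial condition $\tilde p_t(0) = \A\tilde p(0) + \tfrac1\alpha\tilde p(0)$ and Green's identity with $w_1(0)\in H^1_0(\Omega)$ so that the boundary integral drops, reproduces exactly the $w_1$-contributions of $B_M$ and $L_M$; integrating the second equation against $w_2$ by parts in space, the boundary term vanishing because $w_2\in H^1_0(\Omega)$, reproduces the $w_2$-contributions. By bilinearity $(\tilde p,\tilde q)$ solves \eqref{eq:weak_mixed}, which together with the uniqueness above gives existence and uniqueness; and since $\tilde p$ is the solution of \eqref{eq:homogenized-fourth-order}, the two coincide. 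The converse, that a sufficiently regular solution of \eqref{eq:weak_mixed} gives a solution of \eqref{eq:homogenized-fourth-order}, follows by reversing these steps --- first extracting $\tilde q = \A\tilde p$ from the test functions with $w_1 = 0$, then recovering the interior equation and the initial condition from those with $w_2 = 0$ --- just as in Theorem~\ref{thm:equivalence-normal}.

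The integration-by-parts identities are routine and essentially repeat computations already appearing in the proofs of Theorems~\ref{thm:equivalence-normal} and~\ref{thm:first-existence-theorem}. The step that needs genuine care, and the only place where regularity assumptions enter, is the function-space bookkeeping: one has to be sure that the homogenized solution of the normal weak problem really lies in $P$ and that $\A$ applied to it lies in $Q$, since the mixed formulation is posed over a space that is in general strictly smaller than $H^{2,1}_0(\Omega_T)$ --- which is precisely what Lemma~\ref{lem:regularity}~(ii) supplies.
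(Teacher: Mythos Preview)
Your proof is correct and follows essentially the same route as the paper's: uniqueness via the diagonal coercivity of $B_M$ (the skew-symmetric cross terms cancel by the symmetry of $A$, leaving $\int_{\Omega_T}(\tilde p_t^2+\tilde q^2)$ plus a nonnegative initial-time term, and then $\tilde p_t=0$ together with $\tilde p(T)=0$ forces $\tilde p=0$), and existence/identification by taking the solution of \eqref{eq:weak_normal} from Theorem~\ref{thm:first-existence-theorem} under the regularity of Lemma~\ref{lem:regularity}~(ii). The only cosmetic difference is that the paper derives the mixed identity by starting from the already established weak form \eqref{eq:weak_normal} and adding the vanishing quantity $\int_{\Omega_T}(\tilde q-\A\tilde p)w_2\intd x\intd t$ before expanding $\A$, whereas you test the strong mixed system \eqref{eq:mixed_formulation} directly; the underlying integrations by parts are the same.
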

    
    \begin{proof}
        Let $\tilde p\in H^{2,1}(\Omega_T)\cap L^2(0,T; H^3(\Omega))$ be a solution to \eqref{eq:homogenized-fourth-order} and $\tilde q= \A \tilde p\in L^2(0,T; H^1(\Omega))$. Then we have from \eqref{eq:weak_normal} that
        \begin{equation}\label{eq:mixed-proof-from-before}
            \begin{split}
                \int_{\Omega_T} \tilde f w_1\intd x\intd t &= \int_{\Omega_T} \tilde p_t (w_1)_t + \tilde q \A w_1 \intd x \intd t\\
                &\quad + \int _\Omega A \nabla \tilde p(0) \cdot \nabla w_1(0) + \left(a_0 + \frac{1}{\alpha}\right)\tilde p(0) w_1(0)\intd x
            \end{split}
        \end{equation}
        for all $w_1\in H^1(0,T; C_0^\infty(\Omega))$. It holds for arbitrary $w_2\in Q$ that
        \begin{equation*}
        \int_{\Omega_T} (\tilde q - \A \tilde p)w_2\intd x\intd t = 0.
        \end{equation*}
        Adding this to the right hand side of \eqref{eq:mixed-proof-from-before} and inserting the definition of $\A$ at both occurrences, this results in
        \begin{equation*}
            \begin{split}
            \int_{\Omega_T} \tilde f w_1\intd x\intd t & =
                \int_{\Omega_T} \tilde p_t (w_1)_t + \tilde q w_2 + a_0 \left(\tilde q w_1 - \tilde p w_2\right)\intd x\intd t\\
                &\quad + \int_{\Omega_T}\nabla \tilde q \cdot A \nabla w_1 - A \nabla \tilde p \cdot \nabla w_2 \intd x \intd t\\
                &\quad+ \int _\Omega A \nabla \tilde p(0) \cdot \nabla w_1(0) + \left(a_0 + \frac{1}{\alpha}\right)\tilde p(0) w_1(0)\intd x,
            \end{split}
        \end{equation*}
        which is equivalent to \eqref{eq:weak_mixed} by the density of $\C^\infty_0(\Omega )$ in $H^1_0(\Omega)$. Therefore, we have successfully shown the equivalence of \eqref{eq:weak_normal} and \eqref{eq:weak_mixed}. Since a solution to \eqref{eq:weak_normal} exists by Theorem~\ref{thm:first-existence-theorem}, \eqref{eq:weak_mixed} also has a solution. We continue with showing that \eqref{eq:weak_mixed} still admits only one solution in the context of less regularity than required for \eqref{eq:weak_normal}.
        
        Suppose that $(\tilde p_1,\tilde q_1),(\tilde p_2,\tilde q_2)\in X$ both solve \eqref{eq:weak_mixed}. Then $(\tilde p, \tilde q) = (\tilde p_1 - \tilde p_2, \tilde q_1 - \tilde q_2)$ satisfies
        \begin{equation*}
            \begin{split}
                0 & =
                \int_{\Omega_T} \tilde p_t (w_1)_t + \tilde q w_2 + a_0 \left(\tilde q w_1 - \tilde p w_2\right)\intd x\intd t\\
                &\quad + \int_{\Omega_T}\nabla \tilde q \cdot A \nabla w_1 - A \nabla \tilde p \cdot \nabla w_2 \intd x \intd t\\
                &\quad+ \int _\Omega A \nabla \tilde p(0) \cdot \nabla w_1(0) + \left(a_0 + \frac{1}{\alpha}\right)\tilde p(0) w_1(0)\intd x
            \end{split}
        \end{equation*}
        for all $(w_1,w_2)\in X$. Testing this equation with $w_1=\tilde p$ and $w_2=\tilde q$ and using the symmetry of $A$ yields
        \begin{equation*}
            \int_{\Omega_T} \tilde p_t^2 + \tilde q^2\intd x\intd t + \int_\Omega A \nabla \tilde p(0) \cdot \nabla \tilde p(0) + \left(a_0 + \frac{1}{\alpha}\right)\tilde p(0)^2\intd x = 0.
        \end{equation*}
        Now, it follows from $\A$ being elliptic and $\alpha >0$ that $\tilde p = 0$ and $\tilde q = 0$. Hence, a solution to \eqref{eq:weak_mixed} is unique in $X$.
    \end{proof}

    \subsection{Numerical examples}\label{sec:numerical-examples-1}
    In the numerical examples of this section, we investigate the method of solving the fourth order system instead of the data assimilation problem as presented in the previous sections. In those sections a homogenized version of \eqref{eq:fourth-order-system} has been introduced for the demonstration of the idea behind the reformulation method. For the implementation of the numerical examples, we now use a non-homogenized version of \eqref{eq:fourth-order-system}. Following the lines in Theorem~\ref{thm:equivalence-normal} and Theorem~\ref{thm:equivalence-mixed}, we use a Petrov-Galerkin approach to derive the mixed weak formulation
    \begin{equation}\label{eq:weak_non_homogenized}
    \tilde B_M((p,q), (w_1,w_2)) = \tilde L_M (w_1, w_2) \quad \forall (w_1,w_2)\in X,
    \end{equation}
    where we have $p\in P$ and, in contrast to the homogenized setting, $q\in \tilde Q := \left\{v \in L^2(0,T; H^1(\Omega)) \mid v = - y^{(d)}\right\}$. With $\tilde X := P\times \tilde Q$, the bilinear form $\tilde B_M:\tilde X\times X \to \R$ is given as
    \begin{equation*}
        \begin{split}
            \tilde B_M((v_1,v_2), (w_1,w_2)) &=  \int_{\Omega_T} (v_1)_t (w_1)_t + v_2 w_2 + a_0(w_1v_2-v_1w_2)\intd x\intd t\\
            &\quad + \int_{\Omega_T}  A \nabla w_1 \cdot \nabla v_2- A\nabla v_1 \cdot \nabla w_2 \intd x\intd t\\
            &\quad + \int_\Omega A \nabla v_1(0) \cdot \nabla w_1(0) + \left(a_0 + \frac{1}{\alpha}\right)v_1(0) w_1(0)\intd x,
        \end{split}
    \end{equation*}
    while the linear form $\tilde L_M:X\to\R$ is 
    $$\tilde L_M(w_1,w_2) = \int_{\Omega_T} \left(f - y_t^{(d)}- \A y^{(d)}\right)w_1\intd x \intd t + \int_\Omega \left(y^{(b)} - y^{(d)}(0)\right) w_1(0)\intd x.$$
    In both of the following examples, the systems have been solved on a domain with $\Omega = [0,1]$ and $T=1$. For both spatial and temporal discretization, linear finite elements have been applied on an equidistant grid with $\Delta t = \Delta x = 0.025$. All implementations have been realized in Python. The code of the presented examples is available at \url{https://doi.org/10.5281/zenodo.13133804}.\\

    \begin{example}[Compensation of an inaccurate background guess]\label{ex:inacc_background}
        We investigate the ability of the introduced method to correct an inaccurate background guess. Therefore, we assume that the reality can be described precisely by the model, which leaves the background guess $y^{(b)}(x)$ as the only error inflicted quantity.

        Consider the one dimensional homogeneous heat equation with $\A = - \nu \partial^2_x$, $\nu >0$ and $f(t,x) = 0$. We assume that the reality can be described by
        $$y^{(d)}(t,x) = \sin(\pi x) e^{-\pi^2\nu t}.$$
        While the true initial state of the dynamics is $y^{(d)}(0,x) = \sin(\pi x)$, we choose an error inflicted background guess $y^{(b)}(x)$, such that
        \begin{enumerate}[label=(\roman*)]
            \item it approximately depicts the reality, i.e.
            $y^{(b)}(x) = -\left(x - \frac{1}{2}\right)^2 + \frac{1}{4}$,
            \item it substantially differs from the reality, i.e.
            $y^{(b)}(x) = \sin(2\pi x)$.
        \end{enumerate}
        The goal is now to find an updated initial condition $y(0,x) = y_0(x)$, which approximates $y^{(d)}(0,x)$ better that $y^{(b)}$. Since $y^{(d)}(t,x)$ is an exact solution to \eqref{eq:concrete-state} for $f(t,x) = 0$ and the control $u=\sin(\pi x)$, this leads to a better approximation of $y^{(d)}(t,x)$ with $y(t,x)$ in $\Omega_T$. Since small values of $\alpha$ reduce the influence of the faulty background guess in the optimization problem, one might suspect the updated initial condition $y_0(x)$ and the model $y(t,x)$ on entire $\Omega_T$ to be closer to reality as the values of $\alpha$ get smaller. The results for the background guesses (i) and (ii) are shown in Figures~\ref{fig:inacc_background_good} and \ref{fig:inacc_background_bad}. Furthermore, Table~\ref{tab:inacc_background} complements the examples by providing error measurements for different choices of $\alpha$.\\
    \end{example}

    \begin{figure}
            \centering
            \begin{subfigure}[t]{0.32\textwidth}
                \centering
                \includegraphics[width=\linewidth]{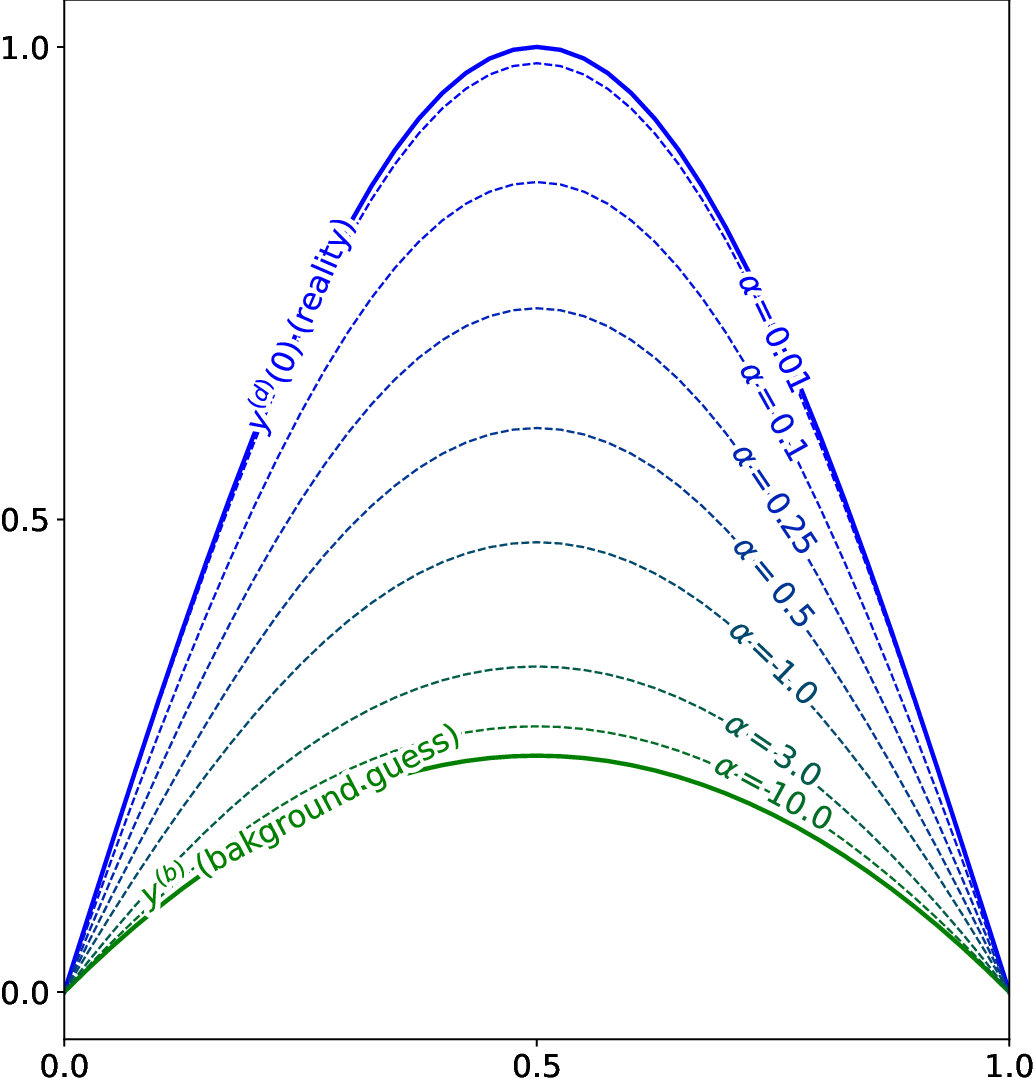}
                \caption{Reality $y^{(d)}(0,x)$ at $t=0$, background guess $y^{(b)}(x)$ and controlled initial conditions for different values of $\alpha$}
            \end{subfigure}
            \hspace*{\fill}
            \begin{subfigure}[t]{0.32\textwidth}
                \centering
                \includegraphics[width=\linewidth]{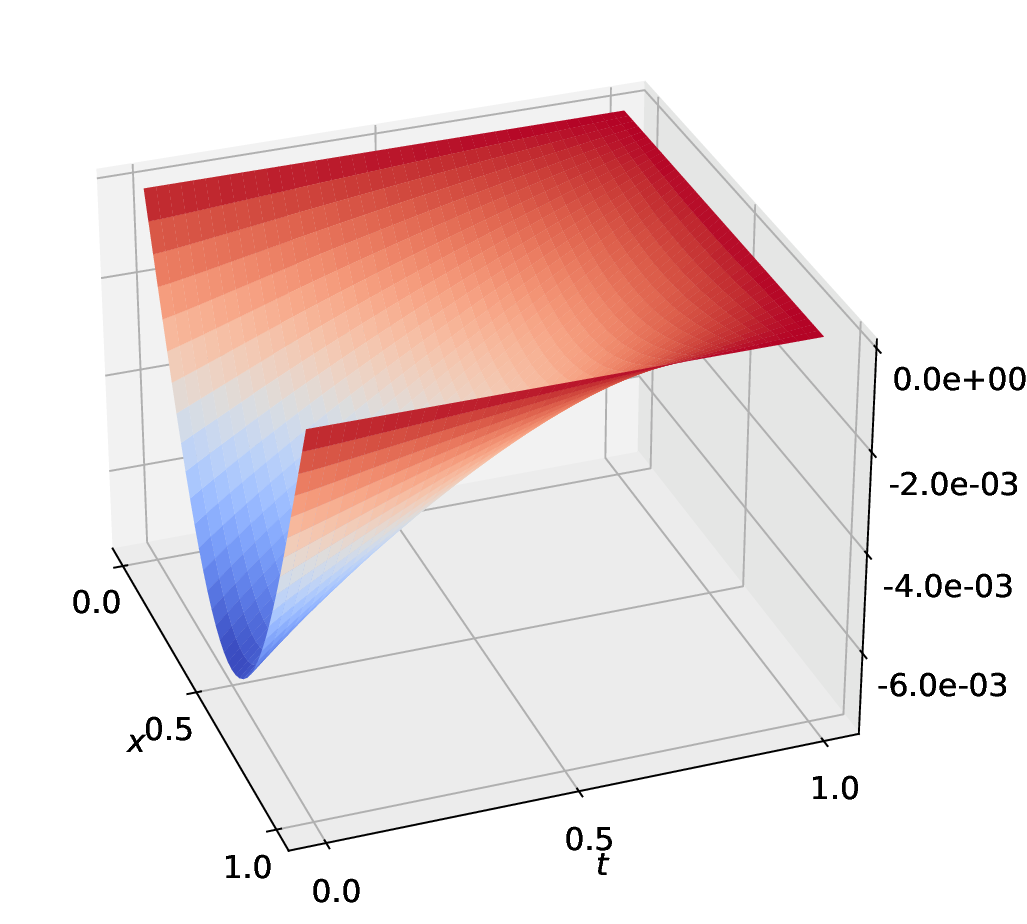}
                \caption{Adjoint state $p(t,x)$ for $\alpha = 0.01$}
            \end{subfigure}
            \begin{subfigure}[t]{0.32\textwidth}
                \centering
                \includegraphics[width=\linewidth]{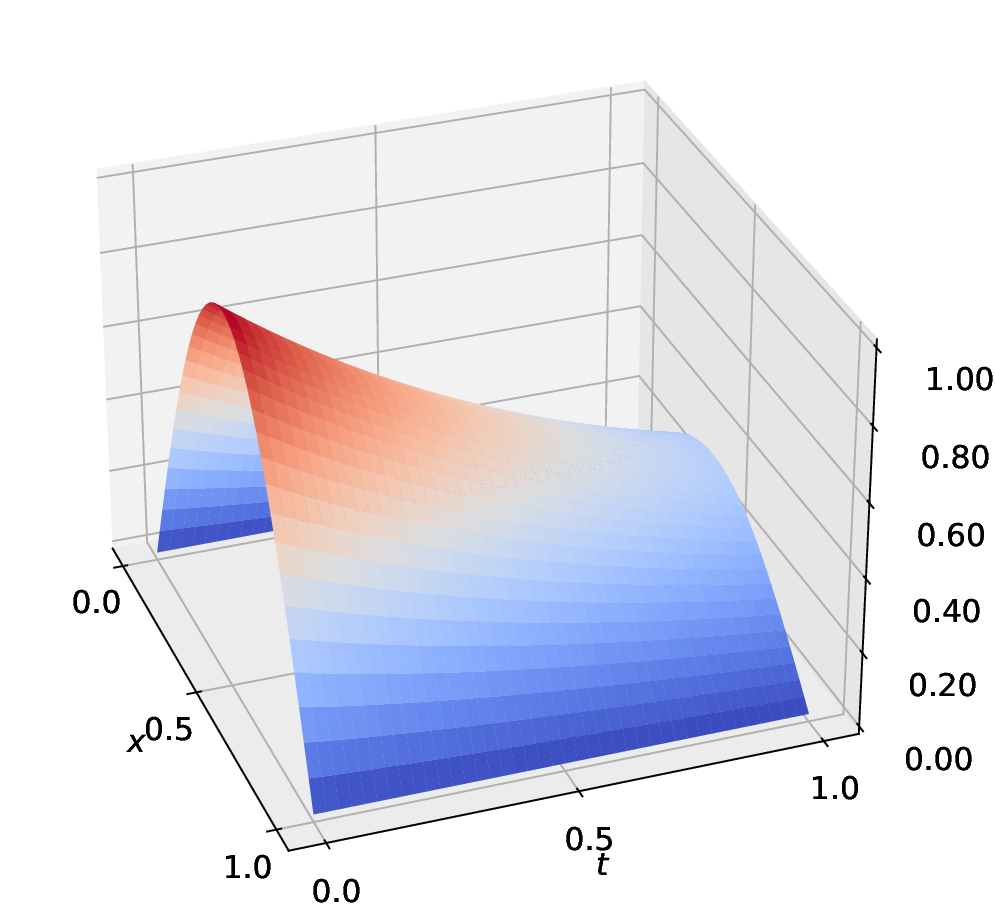}
                \caption{State $y(t,x)$, computed for controlled initial condition with $\alpha= 0.01$}
            \end{subfigure}
            \caption{Results for Example~\ref{ex:inacc_background} (i) with $y^{(b)}(x) = -\left(x - \frac{1}{2}\right)^2 + \frac{1}{4}$}
            \label{fig:inacc_background_good}
        \end{figure}

        \begin{figure}
            \centering
            \begin{subfigure}[t]{0.32\textwidth}
                \centering
                \includegraphics[width=\linewidth]{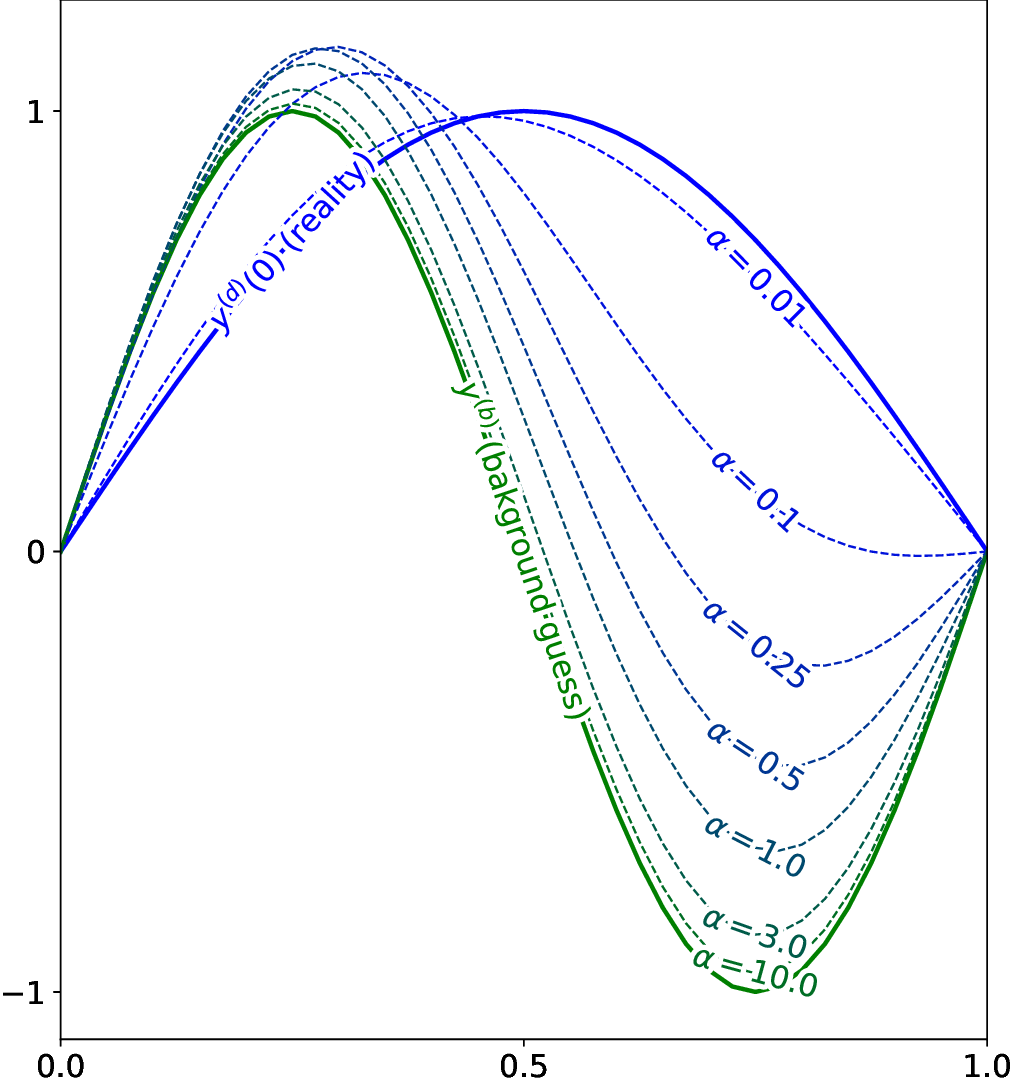}
                \caption{Reality $y^{(d)}(0,x)$ at $t=0$, background guess $y^{(b)}(x)$ and controlled initial conditions for different values of $\alpha$}
            \end{subfigure}
            \hspace*{\fill}
            \begin{subfigure}[t]{0.32\textwidth}
                \centering
                \includegraphics[width=\linewidth]{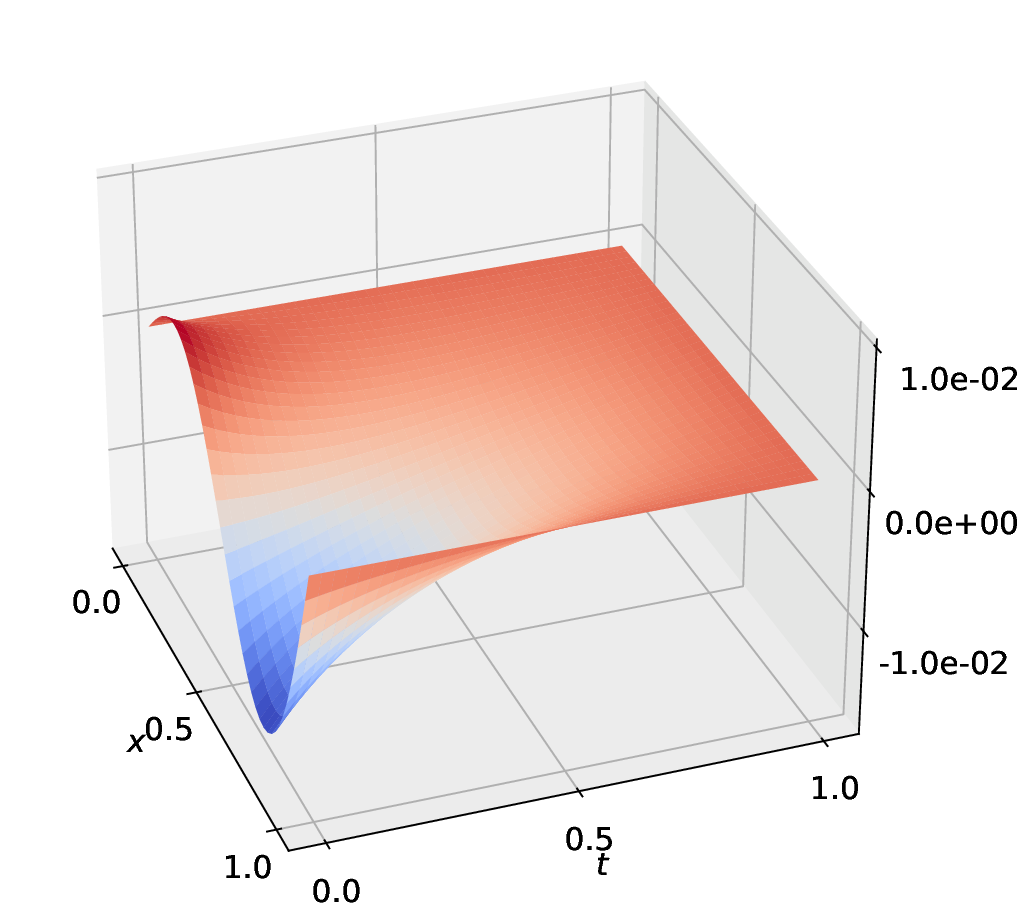}
                \caption{Adjoint state $p(t,x)$ for $\alpha = 0.01$}
            \end{subfigure}
            \begin{subfigure}[t]{0.32\textwidth}
                \centering
                \includegraphics[width=\linewidth]{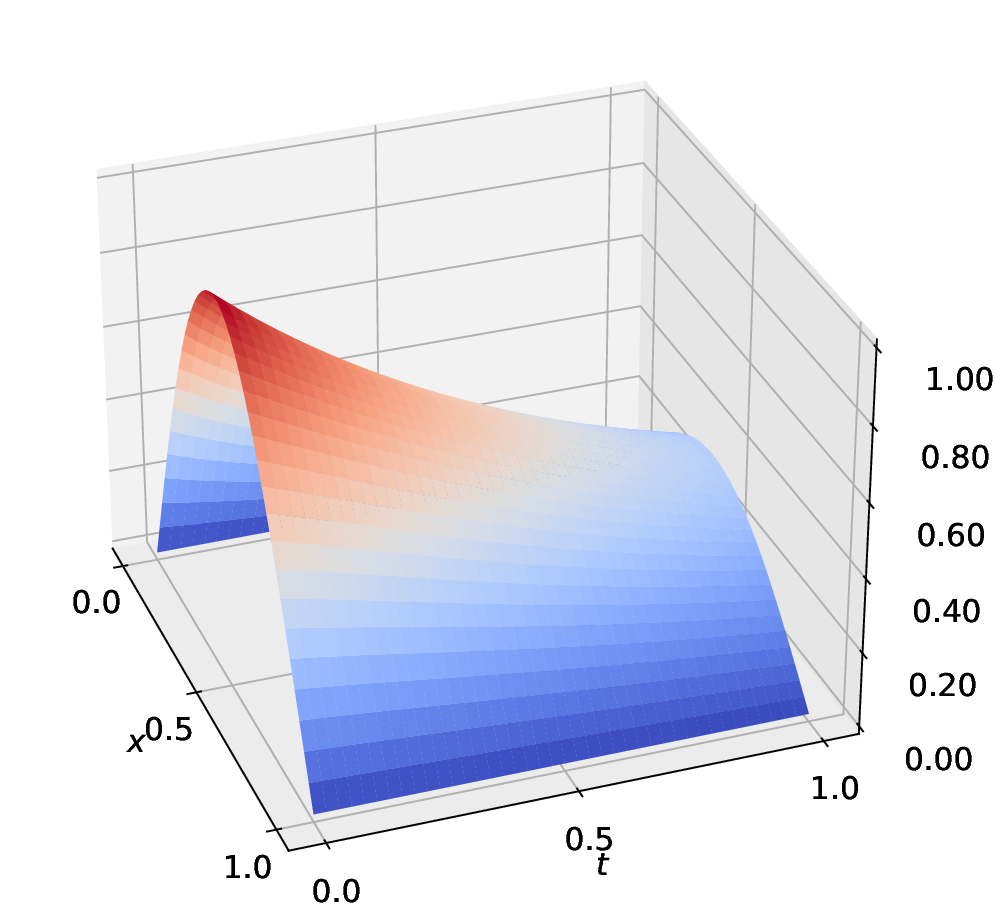}
                \caption{State $y(t,x)$, computed for controlled initial condition with $\alpha= 0.01$}
            \end{subfigure}
            \caption{Results for Example~\ref{ex:inacc_background} (ii) with $y^{(b)}(x) = \sin(2\pi x)$}
            \label{fig:inacc_background_bad}
        \end{figure}
        
        \begin{table}
            \small
            \centering
            \begin{tabular}{ccccccccc}\hline
                 & $y^{(b)}$&$\alpha=0.01$& $\alpha=0.1$ & $\alpha=0.25$ & $\alpha=0.5$ & $\alpha=1.0$& $\alpha=3.0$& $\alpha=10.0$\\\hline
                 (i)&0.3436& 0.0076 & 0.0640 & 0.1252 &0.1835&0.2392&0.3000&0.3292\\
                 (ii)& 0.5298&0.0210&0.1425&0.2401&0.3215&0.3952&0.4738&0.5114\\\hline
            \end{tabular}
            \caption{Root-mean-square error $\text{RMSE} = \sqrt{\frac{1}{Nd}\sum_{i=1}^{N}\sum_{j=1}^{d} (y^{(d)}(t_i,x_j) - y(t_i,x_j))^2}$ for controlled state $y(t,x)$ and reality $y^{(d)}(t,x)$ from Example~\ref{ex:inacc_background} with nodes $t_i$ and $x_i$ in $t$ and $x$ direction respectively with multiplicities $N$ and $d$}
            \label{tab:inacc_background}
        \end{table} 

    \begin{example}[Accounting for model errors]\label{ex:inacc_model}
        We challenge our method by assuming a good initial guess but an error inflicted model. In detail, we consider the heat equation with $\A y(t,x) = - \nu \partial^2_x y(t,x)$, $\nu >0$ and $f(t,x) = 0$ as an error inflicted model. Furthermore, we choose the background guess as 
        $$y^{(b)}(x) = \sin (\pi x),$$
        for which a solution to this equation is given as
        $$y(t,x) = \sin (\pi x) e^{-\nu \pi^2 t}.$$
        Deviating from this, we assume that the reality differs from the model and experiences an additional heat inflow, which corresponds to a model with right hand side
        $$f(t,x) = 2 \sin(\pi x)e^{-\nu \pi^2 t}\frac{1}{\pi}\frac{\varepsilon}{\varepsilon^2 + \left(t-\frac{1}{6}\right)^2}$$
        for small $\varepsilon > 0$. Hence, the heat inflow has its maximum for $t= \frac{1}{6}$ and flattens earlier and later in time. The reality with this right hand side models can be directly computed as
        \begin{equation}\label{eq:example_model_reality}
            y^{(d)}(t,x) = 2 \sin(\pi x) e^{-\nu \pi^2 t} \left(\frac{1}{\pi}\arctan \left(\frac{t-\frac{1}{6}}{\varepsilon}\right)+1\right).
        \end{equation}
        Hence, it holds that the background guess already approximates the reality at the beginning of the observations, i.e. $y^{(d)}(0,x) \approx y^{(b)}(x)$ for small $\varepsilon > 0$. The setting with which we start into the data assimilation process is depicted in Figure~\ref{fig:inacc_model_initial}. The effect after the assimilation process can be found in Figure~\ref{fig:inacc_model_after}. 
        
        While the dynamics resulting from $y^{(b)}$ as an initial condition emulates $y^{(d)}$ perfectly for $0\leq t \leq \frac{1}{6}$, it does not integrate the blowup of the reality for $t=\frac{1}{6}$ in the model. On the other hand side, the controlled model with assimilated data compensates for differing $y^{(d)}$, such that the overall error in $\Omega_T$ gets reduced.\\
    \end{example}

    \begin{figure}
            \centering
            \begin{subfigure}[t]{0.32\textwidth}
                \centering
                \includegraphics[width=\linewidth]{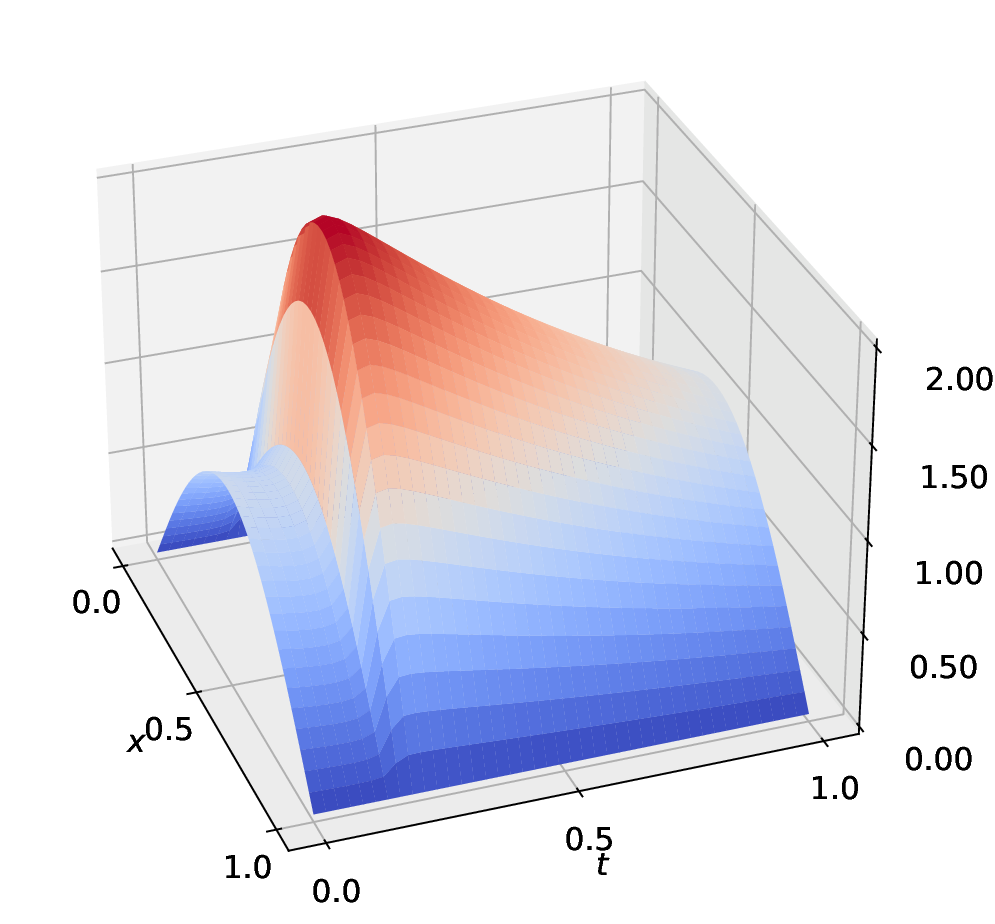}
                \caption{Reality $y^{(d)}$ as given in \eqref{eq:example_model_reality}}
            \end{subfigure}
            \hspace*{\fill}
            \begin{subfigure}[t]{0.32\textwidth}
                \centering
                \includegraphics[width=\linewidth]{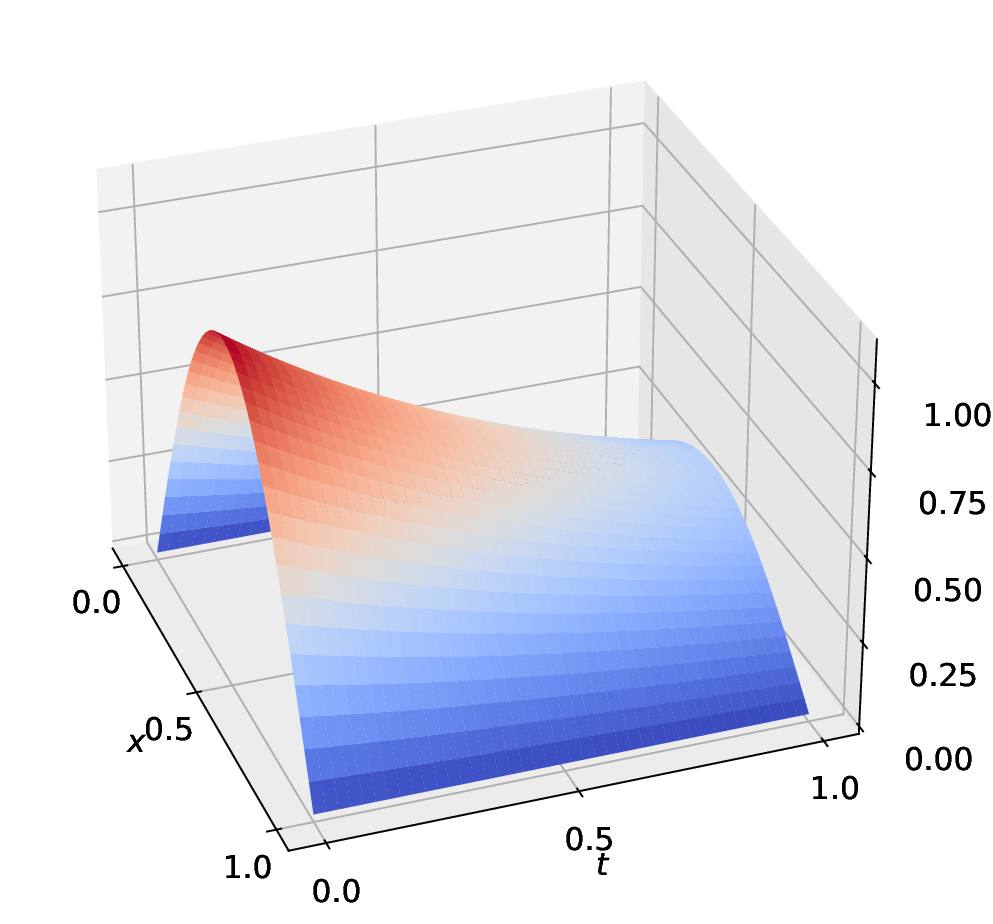}
                \caption{Computed dynamics with background guess $y^{(b)}$ as initial condition}
            \end{subfigure}
            \begin{subfigure}[t]{0.32\textwidth}
                \centering
                \includegraphics[width=0.85\linewidth]{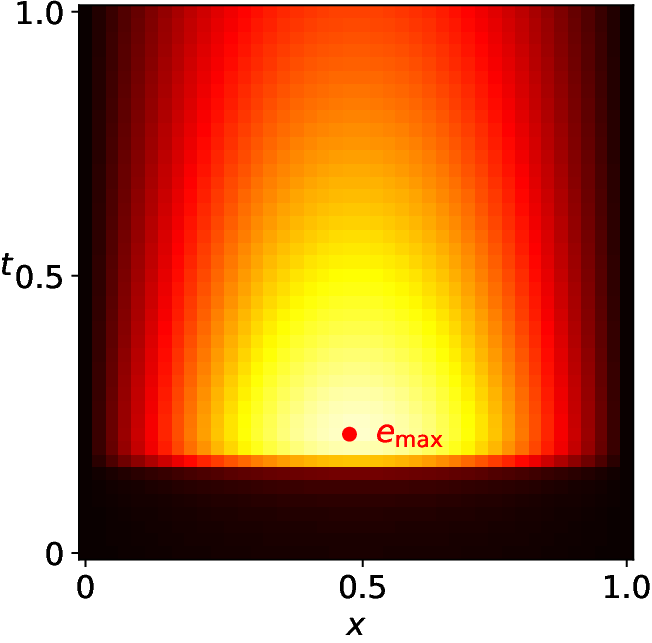}
                \caption{Absolute error between reality and dynamics with initial condition $y^{(b)}$. The maximal absolute error is $e_\text{max} \approx 1.514$}
            \end{subfigure}
            \caption{Initial situation in Example~\ref{ex:inacc_model} before the updated initial condition has been incorporated. The parameters are $\nu = 0.1$, $\alpha=0.01$ and $\varepsilon = 0.01$. The root-mean-square error (computed as for Example~\ref{ex:inacc_background} in Table~\ref{tab:inacc_background}) is $\text{RMSE} \approx 73.01 \cdot 10^{-2}$}
            \label{fig:inacc_model_initial}
        \end{figure}

        \begin{figure}
            \centering
            \begin{subfigure}[t]{0.32\textwidth}
                \centering
                \includegraphics[width=\linewidth]{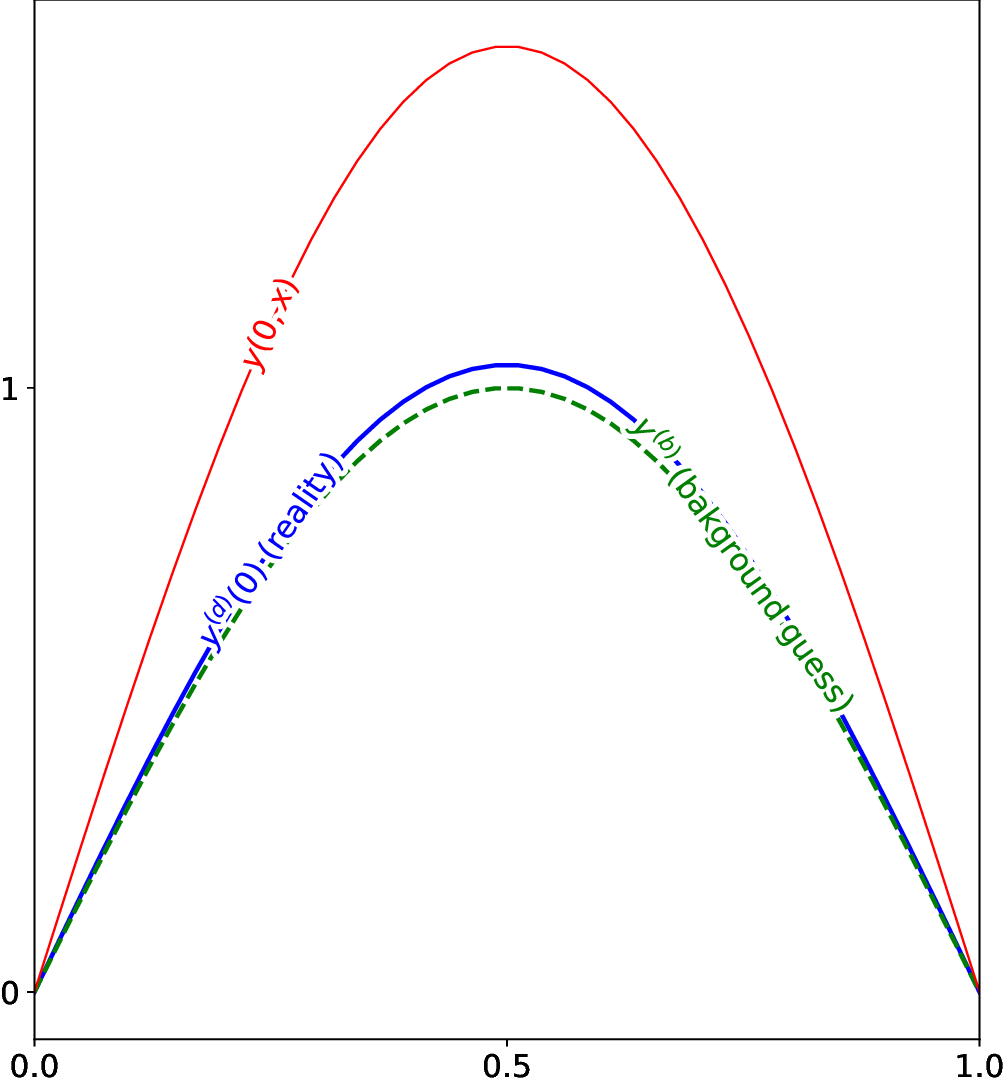}
                \caption{Reality $y^{(d)}(0,x)$ at $t=0$, background guess $y^{(b)}(x)$ and controlled initial condition}
            \end{subfigure}
            \hspace*{\fill}
            \begin{subfigure}[t]{0.32\textwidth}
                \centering
                \includegraphics[width=\linewidth]{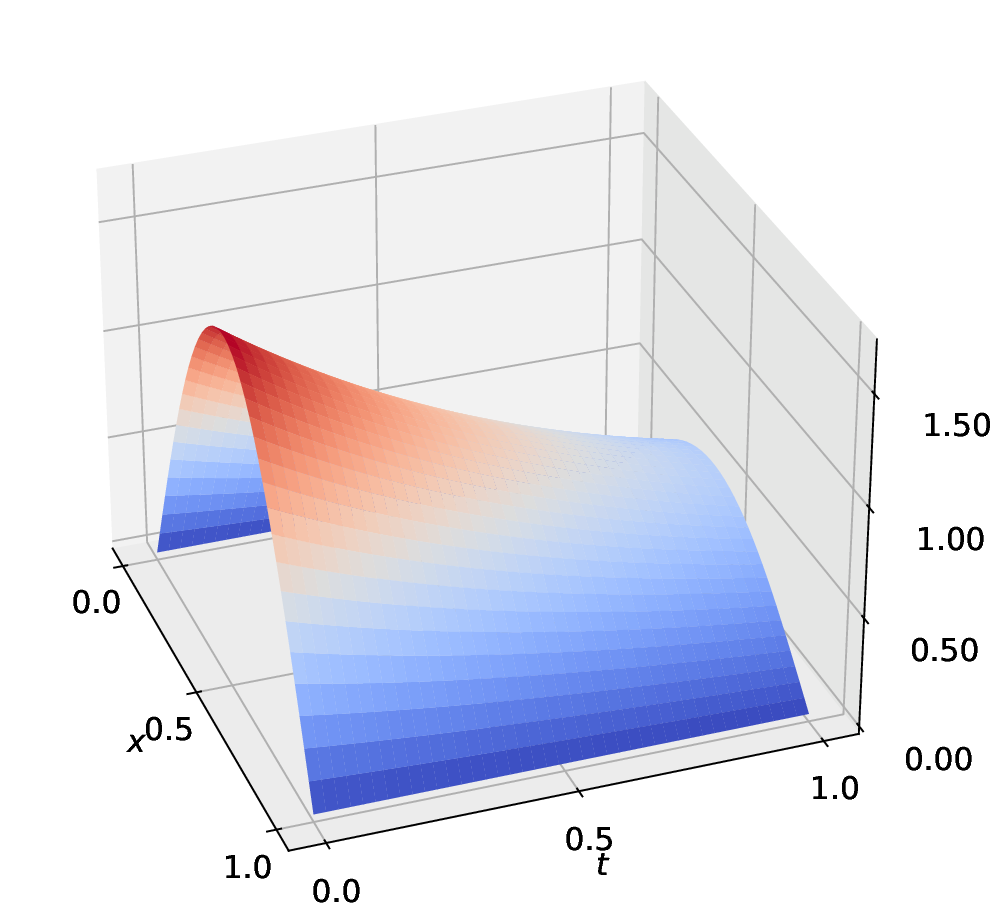}
                \caption{Computed dynamics with control as initial condition}
            \end{subfigure}
            \begin{subfigure}[t]{0.32\textwidth}
                \centering
                \includegraphics[width=0.85\linewidth]{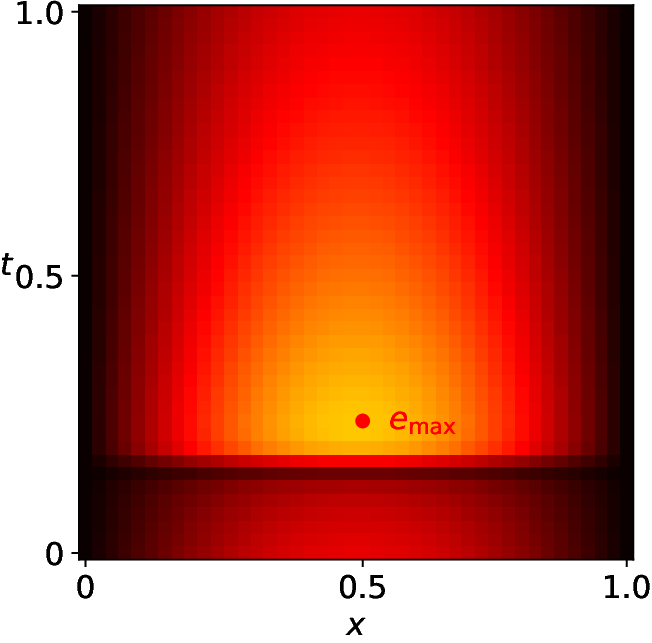}
                \caption{Absolute error between reality and controlled dynamics. The maximal absolute error is $e_\text{max} \approx 1.062$}
            \end{subfigure}
            \caption{Results from Example~\ref{ex:inacc_model}. The parameters are $\nu = 0.1$, $\alpha=0.01$ and $\varepsilon = 0.01$. The root-mean-square error (computed as for Example~\ref{ex:inacc_background} in Table~\ref{tab:inacc_background}) is $\text{RMSE} \approx 53.14 \cdot 10^{-2}$}
            \label{fig:inacc_model_after}
        \end{figure}

    \begin{remark}[Choice of $\alpha$]\label{rem:alpha}
        The presented examples and further experiments, indicated that choosing $\alpha > 0$ as small as possible supports our method best. Nevertheless, we expect this fact to change in case of error inflicted measurements $y^{(d)}$ and would like to investigate this in further detail in combination with the introduction of weighted norms in the optimal control problem \eqref{eq:control-problem}. 
    \end{remark}

    \section{Construction of an adaptive time grid}
    \label{sec:time-adaptivity}
    We now aim to improve the accuracy of $p(0)$ and in doing so the accuracy of the control $u$. For that purpose, an a-posteriori error estimate for the fourth order elliptic problem from Section~\ref{sec:elliptic_fourth_order} is derived in Section~\ref{sec:error-estimate}. For the creation of an adaptive time grid, the error estimate can be utilized in an adaptive cycle. A possible execution of this idea is presented in Section~\ref{sec:adaptive_cycle}. Afterwards, the numerical implementation of this adaptive method is discussed in Section~\ref{sec:adaptive_numerical_examples}. 
    
    The presented ideas are motivated by \cite{time-adaptivity-mpc, space-time}, where similar adaptive procedures have been accomplished for fourth order systems, which resulted from distributed control.

    \subsection{A-posteriori error estimate}
    \label{sec:error-estimate}
    In order to come up with an error estimate for the discretization error of $\tilde p(0)$, we derive an a-posteriori error bound for $(\tilde p,\tilde q)\in P\times Q$ first. The steps up to Theorem~\ref{thm:error-estimate} closely follow the ideas in \cite[Section 3.3]{time-adaptivity-mpc} and adapt them to the setting of initial control. The transition to the error estimation of $\tilde p(0)$ will be accomplished in Corollary~\ref{cor:initial-error}.

    For the measurement of the error for $(\tilde p,\tilde q)$, consider the norm
    $$\opnorm{(\tilde p,\tilde q)}_M := \left(\int_{\Omega_T} (\tilde p_t)^2 + \tilde q^2\intd x \intd t\right)^\frac{1}{2}\quad \forall (\tilde p,\tilde q)\in P\times Q,$$
    which is $\opnorm{\cdot}$ as defined in \eqref{eq:opnorm}, adjusted to the notion of the mixed formulation. 
    
    We consider for the estimation of the error a semi-time discretization of \eqref{eq:weak_mixed} with respect to $\tilde p$, while $\tilde q$ is kept continuous. Let the time horizon be subdivided into the $ N+1 $ discrete time steps $0 = \tau_0 < \tau_1 < ... < \tau_N = T$ with step sizes $\Delta \tau_i = \tau_i - \tau_{i-1}$ and denote the time intervals by $I_i = (\tau_{i-1}, \tau_i]$ for $i=1,...,N$. Moreover, consider the set of linear polynomials $\mathbb P_1$ as well as the time discrete space 
    $$V^k = \Big\{v\in C^0(0,T; H^1(\Omega))\;\Big|\; v|_{I_i}\in \mathbb P_1(I_i))\Big\}$$
     in order to define $P^k := V^k \cap P$.\\

    \begin{definition}[Semi-time discrete mixed weak form]\label{def:semi-form}
    The semi-time discrete mixed weak formulation of \eqref{eq:mixed_formulation} is given by: find $(\tilde p^k, \tilde q^k)\in P^k \times Q$ such that
    \begin{equation}\label{eq:semi-mixed-formulation}
        B_M((\tilde p^k, \tilde q^k), (w_1, w_2)) = L_M(w_1, w_2),\quad \forall (w_1, w_2)\in P^k \times Q.
    \end{equation}
    \end{definition}
    Existence and uniqueness of a solution to \eqref{eq:semi-mixed-formulation} follow with the same arguments as used in Theorem~\ref{thm:equivalence-mixed} in order to show the existence and uniqueness of \eqref{eq:weak_mixed}.

    For the derivation of the residual based error estimate for the semi-time discrete mixed form, we associate $(\tilde p^k, \tilde q^k)\in P^k\times Q$ with the residuals $R_1^k\in P^*$ and $R_2^k\in Q^*$, where
    \begin{equation*}
        \begin{split}
            R_1^k(w_1) &= \int_{\Omega_T} \tilde f w_1 - (\tilde p^k)_t(w_1)_t - A \nabla \tilde q^k \cdot \nabla w_1 - a_0 w_1 \tilde q^k \intd x \intd t\\
            &\quad - \int_\Omega A\nabla \tilde p^k(0) \cdot \nabla w_1(0) + \left(a_0 + \frac{1}{\alpha}\right)p^k(0) w_1(0) \intd x,\\
            R_2^k(w_2) &= \int_{\Omega_T} -\tilde q^k w_2 + A \nabla \tilde p^k \cdot \nabla w_2 + a_0 \tilde p^k w_2 \intd x \intd t.
        \end{split}
    \end{equation*}
    We use integration by parts and the divergence theorem to find the $L^2$-representations of $R_1^k$ and $R_2^k$ as
    \begin{equation*}
        \begin{split}
            R_1^k(w_1) &= \sum_{i=1}^N \int_{I_i}\int_\Omega \left\{\tilde f + (\tilde p^k)_{tt} - \A \tilde q^k\right\}w_1 \intd x \intd t + \sum_{i=1}^m \int_\Omega (\tilde p^k)_tw_1 \Big|_{I_i} \intd x\\
            &\quad - \int_\Omega \A \tilde p^k(0) w_1(0) + \frac{1}{\alpha} \tilde p^k(0)w_1(0)\intd x,\\
            R_2^k(w_2) &= \sum_{i=1}^N
            \int_{I_i}\int_\Omega \left\{- \tilde q^k + \A \tilde p^k\right\}w_2 \intd x \intd t.
        \end{split}
    \end{equation*}
    The residual $R_1^k$ fulfills the Galerkin orthogonality
    \begin{equation}\label{eq:galerkin-first}
        R_1^k(w_1) = 0, \quad \forall w_1 \in P^k,
    \end{equation}
    as well as
    \begin{equation}\label{eq:galerkin-second}
        R_2^k (w_2) = 0, \quad \forall w_2 \in Q.
    \end{equation}
    Furthermore, it holds for $(\tilde p, \tilde q)\in P\times Q$, $(\tilde p^k, \tilde q^k)\in P^k\times Q$ and $(w_1,w_2)\in P^k\times Q$ that
    \begin{equation}
        \begin{split}
            B_M((\tilde p - \tilde p^k, \tilde q - \tilde q^k), (w_1, w_2)) &= B_M((\tilde p, \tilde q), (w_1, w_2))-B_M((\tilde p^k, \tilde q^k), (w_1, w_2))\\
            &= L_M(w_1, w_2) - L_M(w_1, w_2) = 0.
        \end{split}
    \end{equation}
    For the relaxed choice of $(w_1, w_2)\in P\times Q$ the residual equation
    \begin{equation}\label{eq:residual-eq}
        B_M((\tilde p - \tilde p^k, \tilde q - \tilde q^k), (w_1, w_2)) = R_1^k(w_1) + R_2^k(w_2) = R_1^k(w_1)
    \end{equation}
    holds true, where the last step followed by the insertion of \eqref{eq:galerkin-second}. The a-posteriori error estimate of this section can now be formulated.\\

    \begin{theorem}\label{thm:error-estimate}
        Let $(\tilde p, \tilde q)\in P\times Q$ denote the solution to \eqref{eq:weak_mixed} and $(\tilde p^k, \tilde q^k)\in P^k\times Q$ the solution to \eqref{eq:semi-mixed-formulation}. Then, the residual based a-posteriori error estimate
        \begin{equation}\label{eq:error-estimate}
            \opnorm{(\tilde p - \tilde p^k, \tilde q - \tilde q^k)}_M^2 \leq C \eta^2
        \end{equation}
        holds true, where $C > 0$ is constant and
        \begin{equation}\label{eq:error-rhs}
            \eta^2 = \sum_{i=1}^N \int_{I_i}\int_\Omega (\Delta\tau_i)^2 \left(\tilde f + (\tilde p^k)_{tt} - \A \tilde q^k \right)^2\intd x \intd t.
        \end{equation}
    \end{theorem}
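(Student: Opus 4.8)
The plan is to adapt the residual-based energy argument of \cite[Section 3.3]{time-adaptivity-mpc} to the present mixed form: test the residual equation \eqref{eq:residual-eq} with the error itself to extract the norm $\opnorm{\cdot}_M$, use the Galerkin orthogonality \eqref{eq:galerkin-first} to replace the test function by its time-interpolation error, and close the estimate with a first-order interpolation bound in time that produces the factors $\Delta\tau_i$. Concretely, I would set $e_p := \tilde p - \tilde p^k \in P$ and $e_q := \tilde q - \tilde q^k \in Q$; these are admissible in the relaxed residual equation, so \eqref{eq:residual-eq} together with \eqref{eq:galerkin-second} gives $B_M((e_p,e_q),(e_p,e_q)) = R_1^k(e_p)$. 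Evaluating $B_M$ on the diagonal, the terms $a_0(e_q e_p - e_p e_q)$ vanish identically and $A\nabla e_q\cdot\nabla e_p - A\nabla e_p\cdot\nabla e_q$ vanishes by symmetry of $A$, leaving
\[
\int_{\Omega_T} (e_p)_t^2 + e_q^2 \intd x \intd t + \int_\Omega (\A e_p(0))\, e_p(0) \intd x + \frac{1}{\alpha}\Vert e_p(0)\Vert_{0,\Omega}^2 .
\]
The first term is exactly $\opnorm{(e_p,e_q)}_M^2$, while the last two are nonnegative by \eqref{eq:elliptic-coercive} and $\alpha>0$, so $\opnorm{(e_p,e_q)}_M^2 \le R_1^k(e_p)$.

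Next I would introduce the piecewise-linear-in-time nodal interpolant $\mathcal I^k e_p$ of $e_p$ on the grid $0=\tau_0<\dots<\tau_N=T$. Since $e_p \in H^1(0,T;H_0^1(\Omega)) \hookrightarrow C(0,T;H_0^1(\Omega))$ and $e_p(T)=0$ (both $\tilde p$ and $\tilde p^k$ satisfy the end-time condition), one has $\mathcal I^k e_p \in P^k$, so \eqref{eq:galerkin-first} yields $R_1^k(e_p) = R_1^k(e_p - \mathcal I^k e_p)$. The key point is that $e_p - \mathcal I^k e_p$ vanishes at every node $\tau_i$, in particular at $\tau_0 = 0$; inserting it into the $L^2$-representation of $R_1^k$ therefore annihilates all the jump terms $\int_\Omega (\tilde p^k)_t w_1 |_{I_i}\intd x$ as well as the entire initial-time term carrying $\A \tilde p^k(0)$ and $\tfrac{1}{\alpha}\tilde p^k(0)$, and one is left with $R_1^k(e_p - \mathcal I^k e_p) = \sum_{i=1}^N \int_{I_i}\int_\Omega (\tilde f + (\tilde p^k)_{tt} - \A \tilde q^k)(e_p - \mathcal I^k e_p)\intd x\intd t$.

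Then I would bound this by the Cauchy–Schwarz inequality on each slab $I_i \times \Omega$, apply the first-order time-interpolation estimate $\Vert e_p - \mathcal I^k e_p\Vert_{L^2(I_i;L^2(\Omega))} \le C\,\Delta\tau_i\,\Vert (e_p)_t\Vert_{L^2(I_i;L^2(\Omega))}$ — which only needs $e_p$ to be $H^1$ in time and is where the $\Delta\tau_i$ enter, cf.\ \cite{interpolation-result} — and finish with a discrete Cauchy–Schwarz over $i$, obtaining $R_1^k(e_p - \mathcal I^k e_p) \le C\,\eta\,\Vert (e_p)_t\Vert_{0,\Omega_T}$ with $\eta$ as in \eqref{eq:error-rhs}. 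Since $\Vert (e_p)_t\Vert_{0,\Omega_T} \le \opnorm{(e_p,e_q)}_M$, chaining this with the first step gives $\opnorm{(e_p,e_q)}_M^2 \le C\,\eta\,\opnorm{(e_p,e_q)}_M$; dividing through (the case of vanishing error being trivial) and squaring produces \eqref{eq:error-estimate}.

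The step I expect to be the main obstacle is the bookkeeping around the interpolant: it must simultaneously belong to $P^k$ (so that Galerkin orthogonality applies) and kill \emph{every} boundary and jump contribution in the $L^2$-representation of $R_1^k$ — crucially the $t=0$ term, which is new compared with the distributed-control settings of \cite{time-adaptivity-mpc, space-time} and reflects the initial-control structure. Nodal interpolation at $\tau_0,\dots,\tau_N$ does exactly this, but verifying it carefully (together with checking that $e_p$ inherits the regularity and the zero end-time value needed for $\mathcal I^k e_p \in P^k$) is the part that genuinely differs from the references. The remaining ingredients — the two cancellations in $B_M$, the sign of the $t=0$ block via ellipticity, and the elementary interpolation inequality in time — are routine.
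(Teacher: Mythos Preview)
Your proposal is correct and follows essentially the same approach as the paper: test the residual equation with the error, use the Galerkin orthogonality \eqref{eq:galerkin-first} to insert a nodal time-interpolant whose vanishing at all $\tau_i$ kills the jump and $t=0$ contributions in the $L^2$-representation of $R_1^k$, then apply Cauchy--Schwarz and a first-order interpolation bound in time to produce the factors $\Delta\tau_i$, and close via $\opnorm{e}_M^2 \le B_M(e,e)$. The only cosmetic difference is the order of operations---the paper bounds $B_M(e,w)$ for general $w$ before specializing to $w=e$, whereas you test with the error from the outset and make the cancellations in $B_M$ explicit---but the substance is identical.
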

    \begin{proof}
        Let $e := (\tilde p - \tilde p^k, \tilde q - \tilde q^k)$, $w = (w_1,w_2)$ and denote the approximation of $w_1\in P$ in $P^k$ by $I^k_Pw_1 \in P^k$, such that $w_1(\tau_i) = I^k_Pw_1(\tau_i)$ holds for all $i=1,...,N$. We combine \eqref{eq:galerkin-first} with \eqref{eq:residual-eq} to find that
        \begin{equation*}
            \begin{split}
                B_M(e, w) &= R_1^k(w_1 - I^k_Pw_1)\\
                &= \sum_{i=1}^N \int_{I_i}\int_\Omega r^k(w_1 - I^k_P w_1)\intd x \intd t - \int_\Omega \left(\A \tilde p^k + \frac{1}{\alpha} \tilde p^k \right) (w_1 - I^k_Pw_1)(0)\intd x\\
                & \quad + \sum_{i=1}^N \int_\Omega (\tilde p^k)_t(w_1 - I_P^kw_1)\Big|_{I_i}\intd x,
            \end{split}
        \end{equation*}
        with $r^k := \tilde f + (\tilde p^k)_{tt} - \A \tilde q^k$. Notice that only the integrals over $I_i \times \Omega$ remain while all integrals over $\Omega$ vanish since $(w_1 - I^k_Pw_1)(\tau_i) = 0$ for $i=0,...,N$. We subsequently estimate with the Cauchy-Schwarz inequality and use standard interpolation properties (see i.e. \cite[Theorem 1.7]{interpolation-result}) to get
        \begin{equation*}
            \begin{split}
                |B_M(e, w)| &\leq \int_\Omega \left(\sum_{i=1}^N \Vert r^k \Vert_{L^2(I_i)} \Vert w_1 - I_P^k w_1 \Vert_{L^2(I_i)}\right)\intd x\\
                &\leq \int_\Omega \left(\sum_{i=1}^N \Vert r^k \Vert_{L^2(I_i)} c_1 \Delta\tau_i |w_1|_{H^1(\tilde I_i)}\right)\intd x,
            \end{split}
        \end{equation*}
        where $\tilde I_i$ denotes the set of intervals which share a vertex with $I_i$ and $|\cdot|_{H^1}$ the $H^1$-seminorm. With the Cauchy-Schwarz inequality for sums, it holds for constants $c_1,c_2 >0$ that
        \begin{equation*}
            \begin{split}
                |B_M(e, w)| &\leq c_1 \int_\Omega \left(\sum_{i=1}^N \Vert r^k \Vert^2_{L^2(I_i)} (\Delta\tau_i)^2\right)^\frac{1}{2} \left(\sum_{i=1}^N |w_1|_{H^1(\tilde I_i)}\right)^\frac{1}{2}\intd x\\
                &\leq c_2 \int_\Omega \left(\sum_{i=1}^N \Vert r^k \Vert^2_{L^2(I_i)} (\Delta\tau_i)^2\right)^\frac{1}{2} |w_1|_{H^1(0,T)} \intd x\\
                &\leq c_2 \left(\int_\Omega \sum_{i=1}^N \Vert r^k \Vert^2_{L^2(I_i)} (\Delta\tau_i)^2 \intd x\right)^\frac{1}{2} \left(\int_\Omega |w_1|^2_{H^1(0,T)} \intd x\right)^\frac{1}{2},\\
            \end{split}
        \end{equation*}
        where the last step followed with Hölder's inequality. The latter factor in the previous inequality can be estimated by
        $$\left(\int_\Omega |w_1|^2_{H^1(\tilde I_i)} \intd x\right)^\frac{1}{2} \leq \left(\int_{\Omega_T} (w_1)_t^2 + w_2^2 \intd x \intd t\right)^\frac{1}{2}= \opnorm{w}_M.$$
        Therefore, it holds that
        $$|B_M(e,e)| \leq c_2 \left(\int_\Omega \sum_{i=1}^N \Vert r^k \Vert^2_{L^2(I_i)} (\Delta\tau_i)^2 \intd x\right)^\frac{1}{2}\cdot \opnorm{e}_M$$
        and since $\opnorm{e}_M^2 \leq |B_M(e,e)|$ follows from the definition of $\opnorm{\cdot}_M$, we have
        $$\opnorm{e}_M^2 \leq c_2 \left(\int_\Omega \sum_{i=1}^N \Vert r^k \Vert^2_{L^2(I_i)} (\Delta\tau_i)^2 \intd x\right)^\frac{1}{2}\cdot \opnorm{e}_M,$$
        which yields the claim after squaring both hand sides.
    \end{proof}

    With Theorem \ref{thm:error-estimate}, we have a tool to estimate the error between the semi-time discrete and the continuous adjoint state $p$ in $\Omega_T$. Nevertheless, since the only information we need from this system for the calculation of the updated initial condition is the adjoint state at time $t=0$, we are rather interested in an estimation of $\Vert \tilde p(0) - \tilde p^k(0) \Vert_{L^2(\Omega)}$. In fact, this is a direct consequence of Theorem \ref{thm:error-estimate}.
    
    Denote $r = \tilde p - \tilde p^k$. The fact that $\tilde p(T) = \tilde p^k(T) = 0$ and the Cauchy-Schwarz inequality imply
    \begin{equation*}
        \begin{split}
            \Vert \tilde p(0) - \tilde p^k(0) \Vert^2_{L^2(\Omega)} &= \int_\Omega \left(r(T) - r(0)\right)^2\intd x\\
            &= \int_\Omega \left(\int_0^T r_t\cdot 1 \intd t\right)^2 \intd x\\
            &\leq T \int_{\Omega_T} r_t^2 \intd x \intd t\\
            & \leq T \int_{\Omega_T} r_t^2 + \left(\tilde q - \tilde q^k\right)^2 \intd x \intd t\\
            &= T \opnorm{(\tilde p - \tilde p^k, \tilde q - \tilde q^k)}_M^2.
        \end{split}
    \end{equation*}
    Hence, Theorem~\ref{thm:error-estimate} can be extended as follows.\\
    \begin{corollary}\label{cor:initial-error}
        Let $(\tilde p, \tilde q)\in P\times Q$ denote the solution to \eqref{eq:weak_mixed} and $(\tilde p^k, \tilde q^k)\in P^k\times Q$ the solution to \eqref{eq:semi-mixed-formulation}. Then the residual based a-posteriori error estimate
        \begin{equation}\label{eq:error-corollary-estimate}
            \Vert\tilde p(0) - \tilde p^k(0)\Vert^2_{L^2(\Omega)} \leq \tilde C \eta^2
        \end{equation}
        holds true, where $\tilde C > 0$ is constant and
        \begin{equation}\label{eq:error-corollary-rhs}
            \eta^2 = \sum_{i=1}^N \int_{I_i}\int_\Omega (\Delta\tau_i)^2 \left(\tilde f + (\tilde p^k)_{tt} - \A \tilde q^k \right)^2\intd x \intd t.
        \end{equation}
    \end{corollary}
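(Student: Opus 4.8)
The plan is to formalize exactly the chain of estimates displayed immediately before the statement and then invoke Theorem~\ref{thm:error-estimate}. Set $r := \tilde p - \tilde p^k$. Since $\tilde p \in P$ and $\tilde p^k \in P^k \subseteq P$, both functions lie in $H^1(0,T;H_0^1(\Omega)) \hookrightarrow C([0,T];L^2(\Omega))$, so the pointwise-in-time traces $r(0)$ and $r(T)$ are well defined in $L^2(\Omega)$, and $r(T) = 0$ because every element of $P$ vanishes at $T$. The fundamental theorem of calculus in time (valid for a.e.\ $x \in \Omega$ since $r \in H^1(0,T;L^2(\Omega))$) then gives $\tilde p(0) - \tilde p^k(0) = r(0) = r(0) - r(T) = -\int_0^T r_t \intd t$.

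Next I would square this identity, integrate over $\Omega$, and apply the Cauchy--Schwarz inequality in time to the inner integral, which produces the factor $T$ via $\left(\int_0^T r_t \cdot 1 \intd t\right)^2 \leq T \int_0^T r_t^2 \intd t$. Integrating over $\Omega$ and enlarging the integrand by the nonnegative term $(\tilde q - \tilde q^k)^2$ yields $\Vert \tilde p(0) - \tilde p^k(0)\Vert_{L^2(\Omega)}^2 \leq T \opnorm{(\tilde p - \tilde p^k, \tilde q - \tilde q^k)}_M^2$, which is precisely the computation already written out above the corollary.

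Finally, Theorem~\ref{thm:error-estimate} bounds $\opnorm{(\tilde p - \tilde p^k, \tilde q - \tilde q^k)}_M^2 \leq C \eta^2$ with the residual quantity $\eta^2$ from \eqref{eq:error-rhs}, which is identical to \eqref{eq:error-corollary-rhs}. Setting $\tilde C := TC$ then gives \eqref{eq:error-corollary-estimate}. There is no genuine obstacle here: the argument is elementary once Theorem~\ref{thm:error-estimate} is available, and the only point requiring a moment of care is the justification of the time-trace evaluations and of the fundamental theorem of calculus in time, which is supplied by the continuous-in-time embedding of $P$ noted above.
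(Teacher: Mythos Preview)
Your proposal is correct and mirrors the paper's argument essentially line for line: the paper also sets $r=\tilde p-\tilde p^k$, uses $r(T)=0$ with the fundamental theorem of calculus and Cauchy--Schwarz in time to obtain $\Vert r(0)\Vert_{L^2(\Omega)}^2\le T\opnorm{(\tilde p-\tilde p^k,\tilde q-\tilde q^k)}_M^2$, and then invokes Theorem~\ref{thm:error-estimate} with $\tilde C=TC$. There is nothing to add.
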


    \begin{remark}
        Let $(p,q)\in P\times \tilde Q$ denote a solution to the non-homogenized problem \eqref{eq:weak_non_homogenized} and $(p^k, q^k)\in P^k\times \tilde Q$ the solution to a semi-time discrete mixed weak form (defined similarly to the homogenized version in Definition~\ref{def:semi-form}). Then, there exist constants $C,\tilde C > 0$, such that 
        \begin{equation}\label{eq:error-non-hom}
            \opnorm{( p - p^k, q - q^k)}_M^2 \leq C \eta^2 \quad\text{and}\quad  \Vert p(0) -  p^k(0)\Vert^2_{L^2(\Omega)} \leq \tilde  C \eta^2
        \end{equation}
        where $\eta$ is in both cases defined as
        \begin{equation}\eta^2 = \sum_{i=1}^N \int_{I_i}\int_\Omega (\Delta\tau_i)^2 \left( f - y_t^{(d)} - \A y^{(d)}+ ( p^k)_{tt} - \A q^k \right)^2\intd x \intd t.
        \end{equation}
    \end{remark}

    \subsection{Adaptive cycle}
    \label{sec:adaptive_cycle}
    The error estimate from Corollary \ref{cor:initial-error} can now be used in order to create an adaptive grid in time on which the fourth order system can be solved. We propose doing this by following the standard repetitive adaptive cycle
    \begin{equation*}
        \text{solve }\rightarrow\text{ estimate }\rightarrow\text{ mark }\rightarrow\text{ refine}.
    \end{equation*}
    Each new cycle starts with solving the fourth order system with finite elements in space and time on a coarse grid. Afterwards, one may use \eqref{eq:error-corollary-rhs} to determine the contribution of the error on each time interval $I_i$. Then, the intervals with the largest error are marked using the Dörfler marking strategy \cite{doerfler}. During the refinement step, the marked intervals are then split by e.g.\ bisection. These steps may then be repeated until a prescribed precision or a maximal number of time intervals is reached. \\
    
    \begin{remark}[Heuristic assumption]\label{rem:heuristic_assumption}
        We derived the error estimate \eqref{eq:error-estimate} (and subsequently \eqref{eq:error-corollary-estimate}) for a time discrete formulation in $\tilde p$, while $\tilde q$ is kept continuous. In practice, we solve a fully space-time discrete formulation, but still use the semi-time discrete form to construct an adaptive time grid. For this, we assume that the temporal discretization of $\tilde p^k$ is insensitive with respect to the spatial discretization. Moreover, we assume that the temporal discretization of $\tilde q^k$ does not strongly influence the error estimate. The assumption is supported by the results of our numerical examples, but may not hold true in general. For this reason, we intend to derive an a-posteriori error estimate for a fully space-time discrete mixed variational form in future work. Notice, that this assumption was also required in the works which motivated the time adaptivity in this paper \cite{time-adaptivity-mpc, alla18}.\\
    \end{remark}

    \begin{remark}[Comparison of error estimate and actual error]
        A comparison of the actual error with the estimated error bound shows that the error estimate can only be used as an indication for the change in the true error due to its different magnitude in comparison to the true error. This behavior has also be observed for Example \ref{ex:error_estimation} in Section \ref{sec:adaptive_numerical_examples} (i.e.\ Figure~\ref{fig:jump-adaptive-overview}~(c)).
    \end{remark}
    
    \subsection{Numerical examples}
    \label{sec:adaptive_numerical_examples}
    We extend the examples from Section~\ref{sec:numerical-examples-1} by applying time adaptivity using the error estimate \eqref{eq:error-non-hom}. All implementations have been realized in Python. The code of the presented examples is available at \url{https://doi.org/10.5281/zenodo.13133804}.\\

    \begin{remark}[A-priori generatable error grid]
        Since we use piecewise linear finite elements, the second order terms in the error bound vanish, such that
        $$\eta^2 = \sum_{i=1}^N \int_{I_i}\int_\Omega (\Delta\tau_i)^2 \left( f - y_t^{(d)} - \A y^{(d)}\right)^2\intd x \intd t$$
        can be used as an error indicator. Hence, the error indicator is independent of $p$ and $q$ and the time adaptive grid can be created before any computation of the adjoint state. \\
    \end{remark}
    
    \begin{example}[Continuation of Example~\ref{ex:inacc_model}]\label{ex:error_estimation}
        In this example, we extend Example~\ref{ex:inacc_model} from Section~\ref{sec:numerical-examples-1} by the application of the error estimate. The reader may refer to Example~\ref{ex:inacc_model} to get familiar with the setting. The initial situation before data assimilation on a is depicted in Figure~\ref{fig:inacc_model_initial}. The effect after the assimilation process can be found in Figure~\ref{fig:inacc_model_after}. Since we compute an adaptive time grid for the adjoint state $p(t,x)$, but still solve for $y(t)$ on a regular grid, these figures are still relatively accurate.

        The created time adaptive grid, the adjoined state on this grid and the error in the initial condition during the refinement process are depicted in Figure~\ref{fig:jump-adaptive-overview}. Furthermore, an overview of the computed initial states of the adjoint state at different refinement levels of the grid can be found in Figure~\ref{fig:jump-adaptive-initials}. In both figures, the computation of the adjoint state on an adaptive grid has been compared to its computation on a uniform grid with the same number of timesteps.
        
        We first notice, that the error bound $\eta$ with which we estimated the initial state strongly differs from the actual error in the initial state. Hence, the error bound can only be seen as an indicator for where to split the time grid and does not provide any knowledge about the magnitude of the true error.
        
        Secondly, it can be observed in this example, that the adaptive grid and the uniform grid do not differ that much if the number of timesteps is large. Nevertheless, for only a small number of timesteps, the adaptive grid is clearly advantageous over a uniform grid.\\
    \end{example}

    \begin{figure}
        \centering
        \begin{subfigure}[t]{0.3\textwidth}
            \centering
            \includegraphics[width=\linewidth]{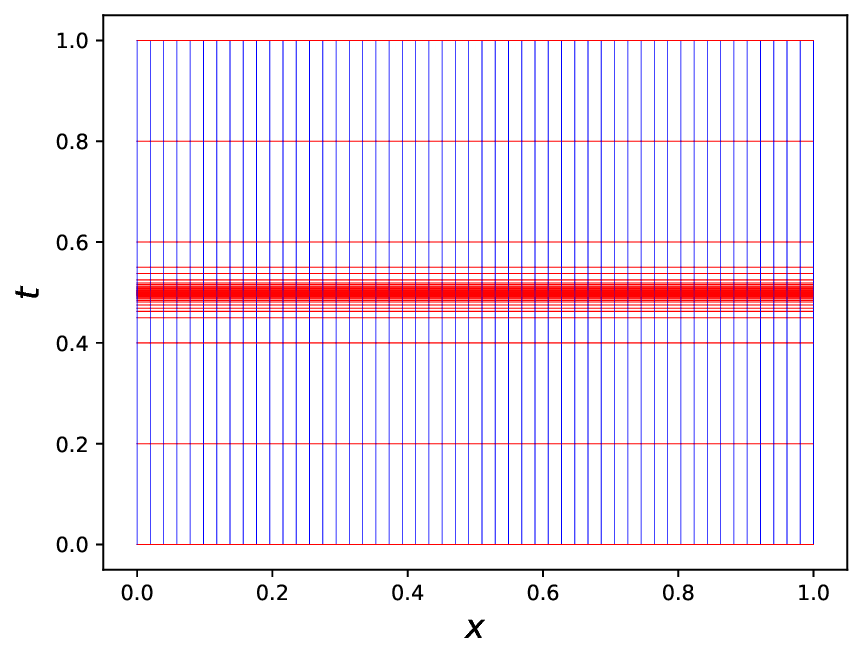}
            \caption{Time adaptive grid with $N=40$ timesteps}
        \end{subfigure}
        \hspace*{\fill}
        \begin{subfigure}[t]{0.3\textwidth}
            \centering
            \includegraphics[width=\linewidth]{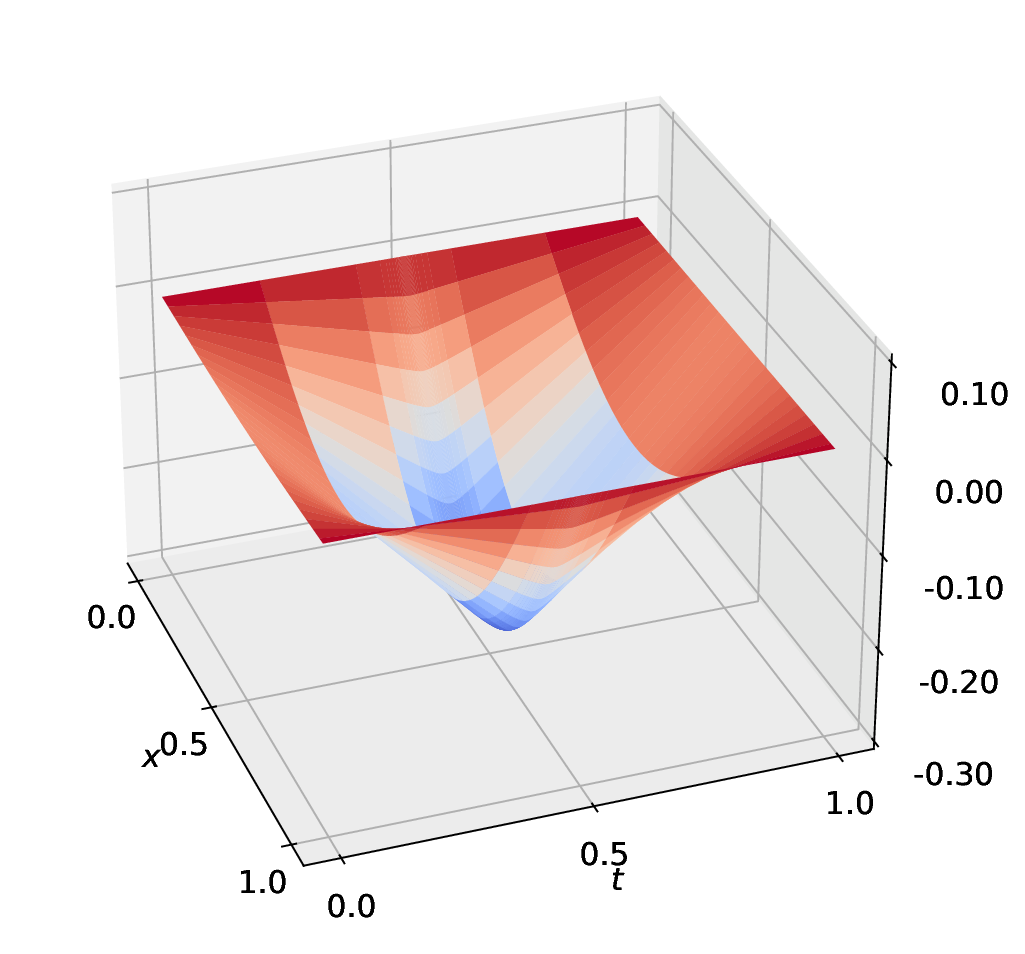}
            \caption{$p(t,x)$ on the time adaptive grid depicted from (a)}
        \end{subfigure}
        \hspace*{\fill}
        \begin{subfigure}[t]{0.3\textwidth}
            \centering
            \includegraphics[width=\linewidth]{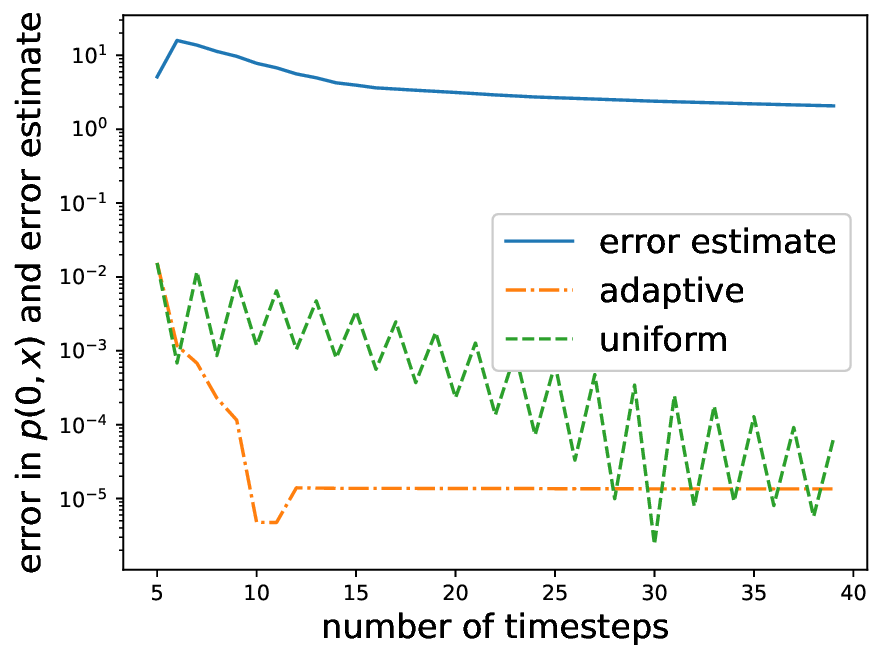}
            \caption{Comparison between the error estimate $\eta$ and the true error in the initial state $\Vert p(0) - p^k(0)\Vert_{L^2(\Omega)}$ during the creation of the time grid from (a) and on a uniform grid}
        \end{subfigure}\\
        \caption{Creation of an adaptive time grid for the computation of the adjoint state. The considered data assimilation problem is described in Example~\ref{ex:inacc_model} and Example~\ref{ex:error_estimation} with parameters $\nu = 0.1$, $\alpha=0.01$ and $\varepsilon = 0.01$. The grid was created from a grid with $N=5$ evenly spaced timesteps by bisection of the interval with the largest error estimate. The adaptive cycle has been repeated $35$ times until $N=40$ timesteps have been reached. For the comparison of $p^k(0)$ with $p(0)$, the adjoint state has been computed on a fine grid and then considered as $p(t,x)$}
        \label{fig:jump-adaptive-overview}
    \end{figure}

    \begin{figure}
        \centering
        \begin{subfigure}[t]{0.3\textwidth}
            \centering
            \includegraphics[width=\linewidth]{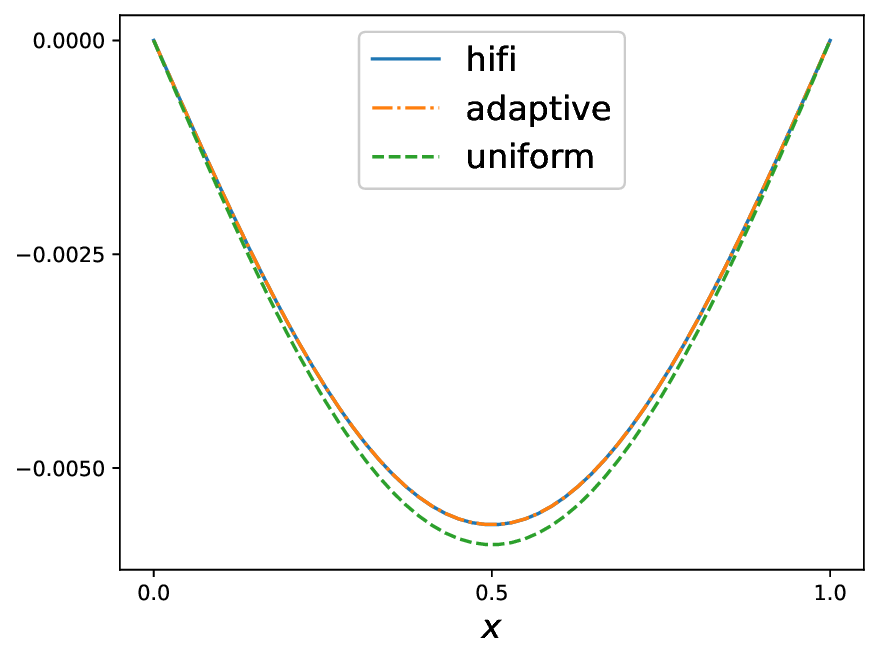}
            \caption{$N=10$}
        \end{subfigure}
        \hspace*{\fill}
        \begin{subfigure}[t]{0.3\textwidth}
            \centering
            \includegraphics[width=\linewidth]{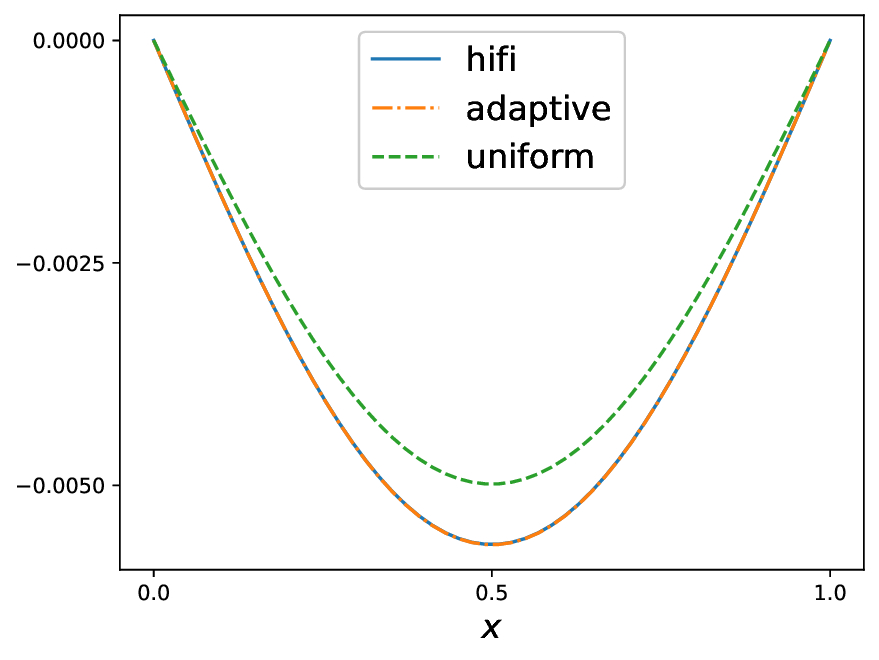}
            \caption{$N=15$}
        \end{subfigure}
        \hspace*{\fill}
        \begin{subfigure}[t]{0.3\textwidth}
            \centering
            \includegraphics[width=\linewidth]{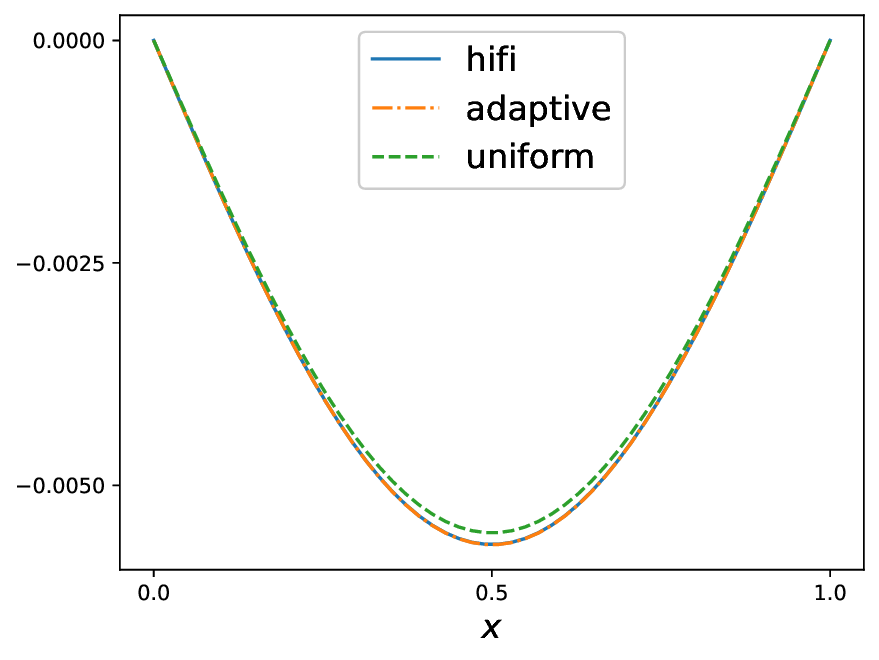}
            \caption{$N=25$}
        \end{subfigure}\\
        \caption{Comparison of initial state, computed on an adaptive grid and a uniform grid, with $p(0,x)$. The solved data assimilation problem is described in Example~\ref{ex:inacc_model} and Example~\ref{ex:error_estimation} with parameters $\nu = 0.1$, $\alpha=0.01$ and $\varepsilon = 0.01$. The grid was created from a grid with $N=5$ evenly spaced timesteps by bisection of the interval with the largest error estimate until a grid with $N=10,15,25$ was created. For the comparison of $p^k(0)$ to $p(0)$, the adjoint state has been computed on a fine grid and then considered as $p(t,x)$}
        \label{fig:jump-adaptive-initials}
    \end{figure}

    \begin{example}[Behavior of time adaptive grid]\label{ex:eps-setup}
        We investigate the response of the adaptive grid to temporal changes in the adjoint state. Therefore, we construct an adjoint state as 
        $$p(t,x) := \frac{g\left(\frac{1}{2}-\frac{t-m}{\varepsilon}\right)}{g\left(\frac{1}{2}+\frac{t-m}{\varepsilon}\right) + g\left(\frac{1}{2}-\frac{t-m}{\varepsilon}\right)} \sin (\pi x),$$
        where
        $$g(t) := \begin{cases}
            e^{-\frac{1}{t}} & t > 0,\\
            0 & \text{else}
        \end{cases}$$
        with parameters $m\in (0,T)$ and $\varepsilon >0$ such that $0 \leq m-\varepsilon< m+\varepsilon\leq T$. The parameters can be used to control the position and intensity of the adjoint's decrease in time (cf.\ Figure~\ref{fig:setup_eps}). The smaller $\varepsilon$ is, the steeper the decrease in the adjoint state over time gets. We now choose 
        \begin{equation*}
            \begin{split}
                f(t,x) &= -p_{tt}(t,x) + \nu^2\partial_x^4 p(t,x) + y^{(d)}_t(t,x) - \nu\partial_x^2 y^{(d)}(t,x),\\
                y^{(d)} (t,x) &= p_t(t,x),\\
                y^{(b)}(x) &= \left(\frac{1}{\alpha} + \nu \pi^2\right)p(0),
            \end{split}
        \end{equation*}
         such that $p(t,x)$ forms a solution to \eqref{eq:fourth-order-system} with $\A = -\nu \partial^2_{x}$. $\nu >0$. A comparison of the errors which occur in the computed initial conditions $p(0,x)$ on adaptive and uniform grids can be found in Figure~\ref{fig:eps-error}. The adaptive grids which have been produced with the adaptive cycle are depicted in Figure~\ref{fig:eps-grid}.

        For relatively large choices of $\varepsilon$, the difference between the error resulting from a uniform grid and an adaptive grid are not significant (Figure~\ref{fig:eps-error}~(d)-(f)). The adaptive grid starts showing its superiority for small choices of $\varepsilon$ (Figure~\ref{fig:eps-error}~(a)-(c)). 
          
        This observation can be explained by the examination of the corresponding grids. For large $\varepsilon$, the change in the adjoint state and therefore the time intervals in the adaptive grid are rather distributed over the entire domain, than concentrated around a single value (Figure~\ref{fig:eps-grid}~(f)). Hence, the adaptive grid and the uniform grid do not differ that much.
        The opposite is the case for small $\varepsilon$. In this case, the adaptive grid recognizes the steep change and adapts to it (Figure~\ref{fig:eps-grid}~(a)). 
    \end{example}

    \begin{figure}
            \centering
            \begin{subfigure}[t]{0.55\textwidth}
                \centering
                \includegraphics[width=\linewidth]{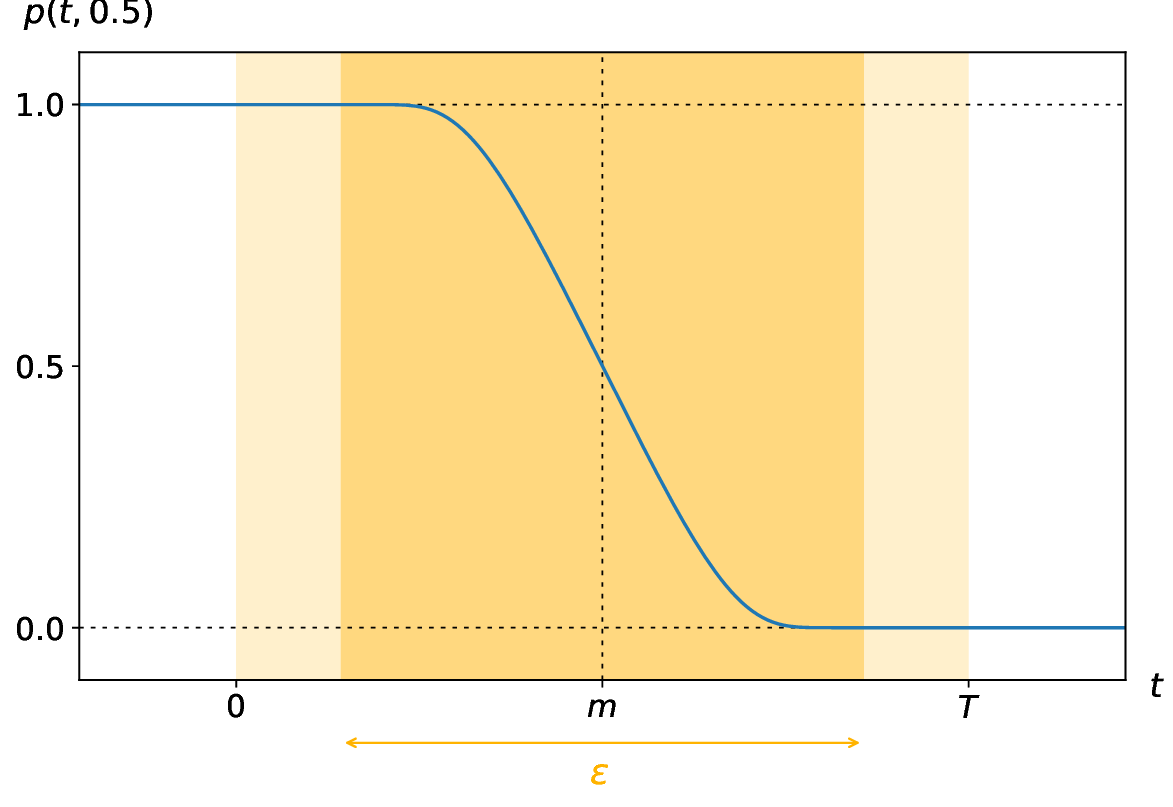}
                \caption{Cut through the adjoint state $p(t,x)$ for arguments $(t,x)\in (0,T)\times \{0.5\}$}
            \end{subfigure}
            \hspace*{\fill}
            \begin{subfigure}[t]{0.4\textwidth}
                \centering
                \includegraphics[width=\linewidth]{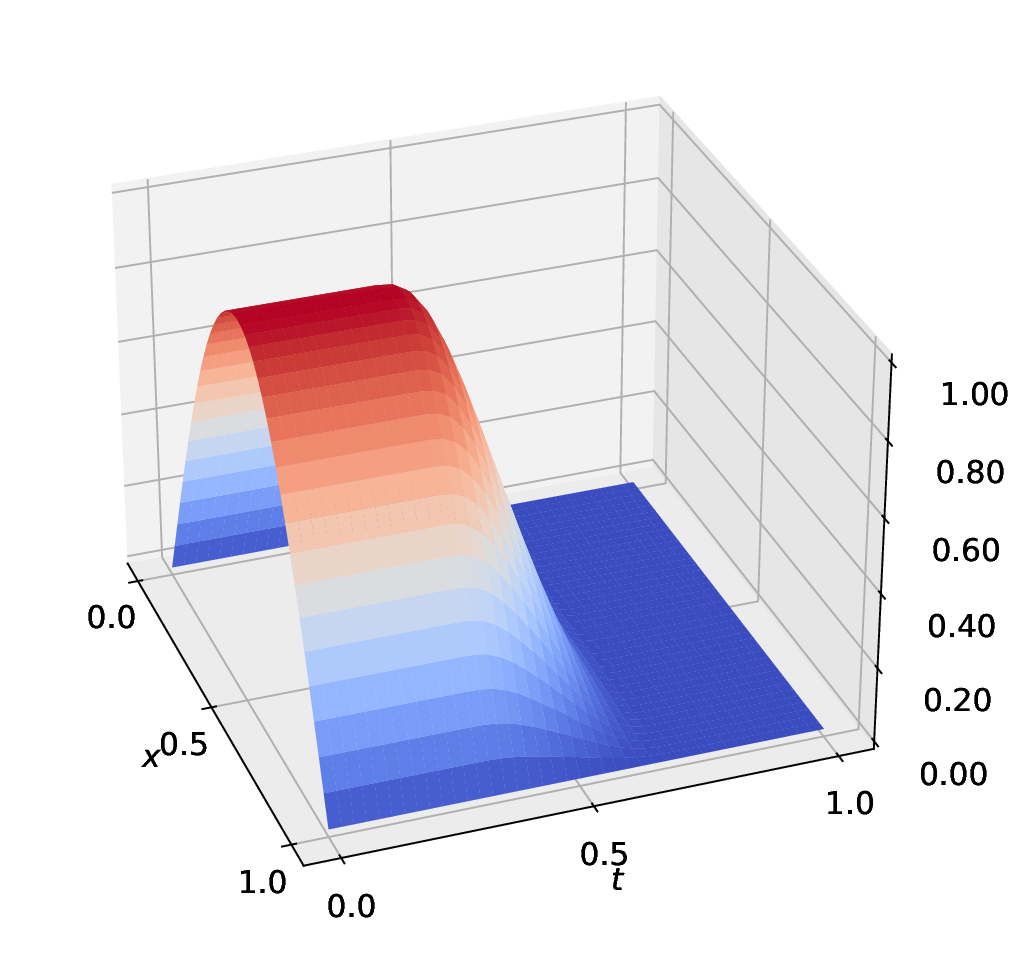}
                \caption{Adjoint state $p(t,x)$ on $\Omega_T = (0,1)^2$ for parameters $m=\frac{1}{2}$ and $\epsilon = \frac{1}{2}$}
            \end{subfigure}
            \caption{Visualization of the constructed adjoint state $p(t,x)$ in Example~\ref{ex:eps-setup}}
            \label{fig:setup_eps}
        \end{figure}

    \begin{figure}
        \centering
        \begin{subfigure}[t]{0.3\textwidth}
            \centering
            \includegraphics[width=\linewidth]{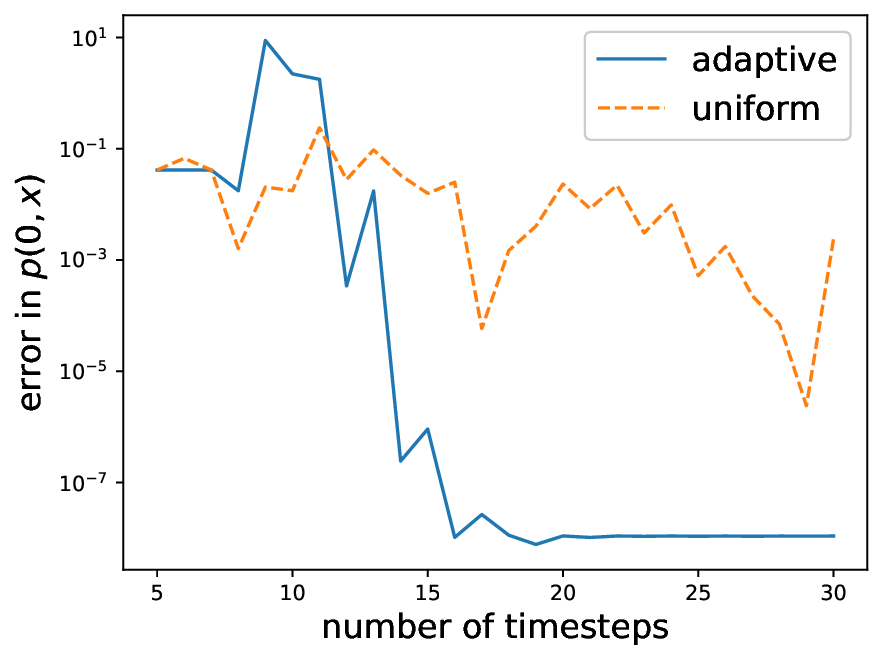}
            \caption{$\varepsilon = 0.05$}
        \end{subfigure}
        \hspace*{\fill}
        \begin{subfigure}[t]{0.3\textwidth}
            \centering
            \includegraphics[width=\linewidth]{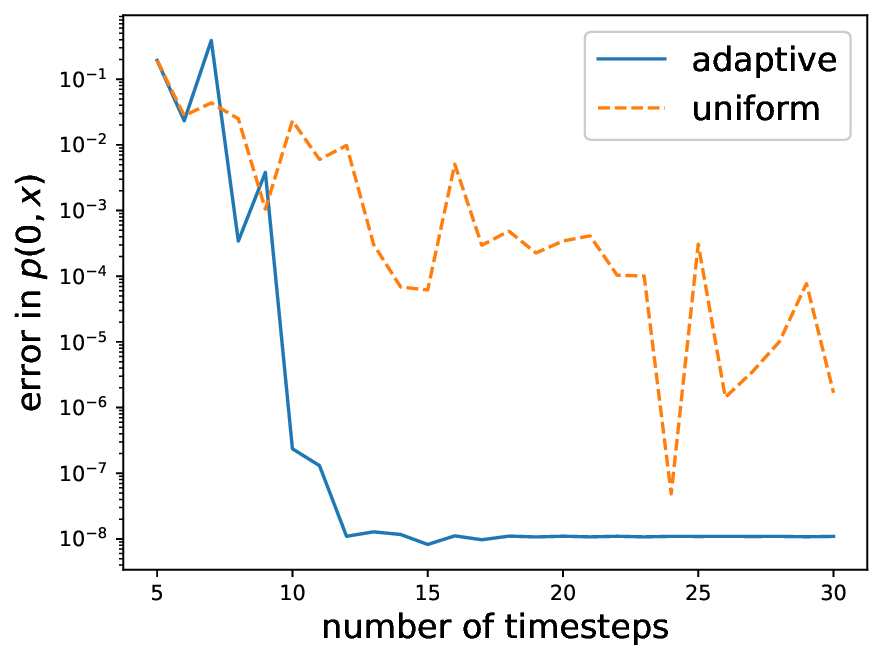}
            \caption{$\varepsilon = 0.1$}
        \end{subfigure}
        \hspace*{\fill}
        \begin{subfigure}[t]{0.3\textwidth}
            \centering
            \includegraphics[width=\linewidth]{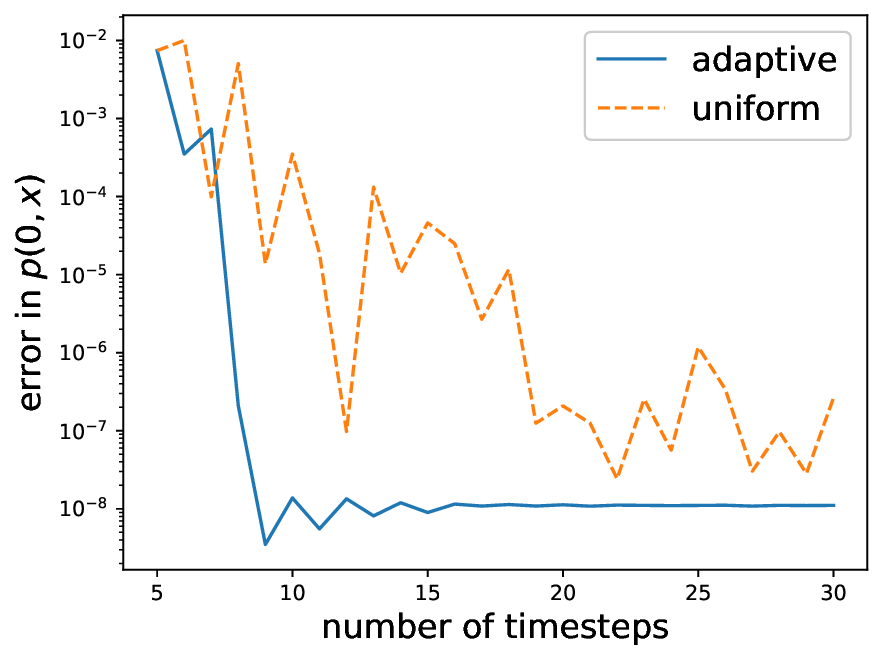}
            \caption{$\varepsilon = 0.2$}
        \end{subfigure}\\

        \centering
        \begin{subfigure}[t]{0.3\textwidth}
            \centering
            \includegraphics[width=\linewidth]{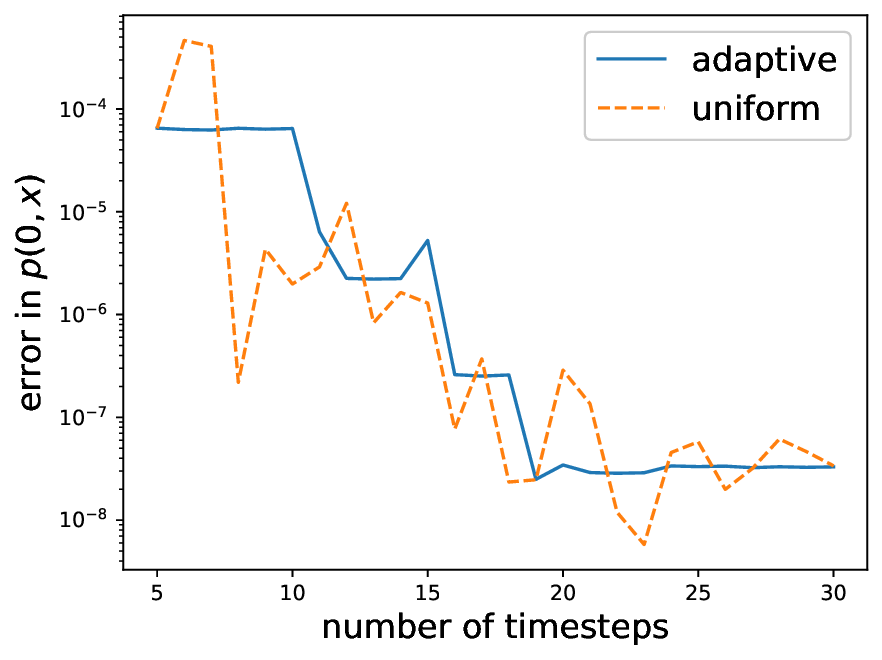}
            \caption{$\varepsilon = 0.3$}
        \end{subfigure}
        \hspace*{\fill}
        \begin{subfigure}[t]{0.3\textwidth}
            \centering
            \includegraphics[width=\linewidth]{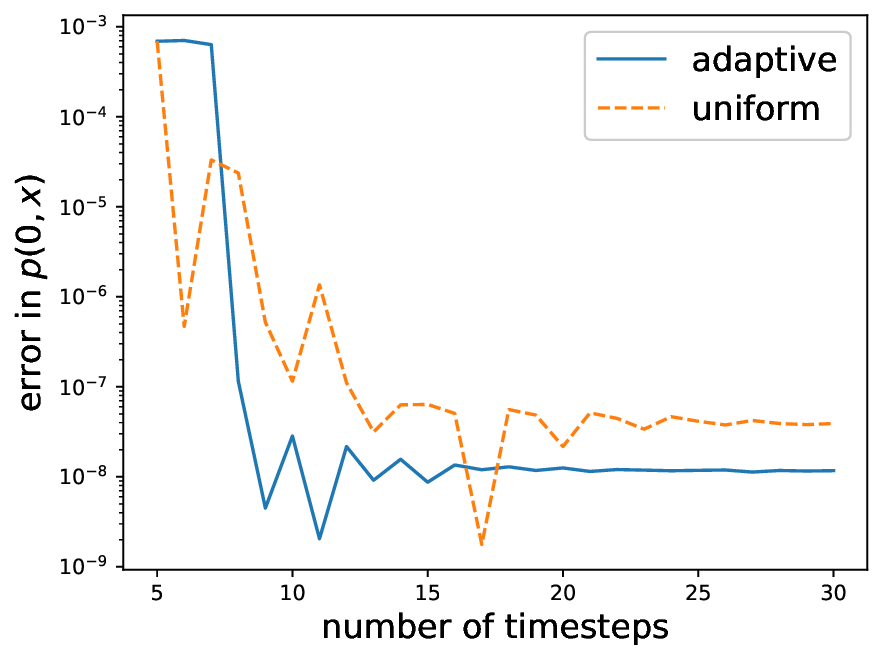}
            \caption{$\varepsilon = 0.4$}
        \end{subfigure}
        \hspace*{\fill}
        \begin{subfigure}[t]{0.3\textwidth}
            \centering
            \includegraphics[width=\linewidth]{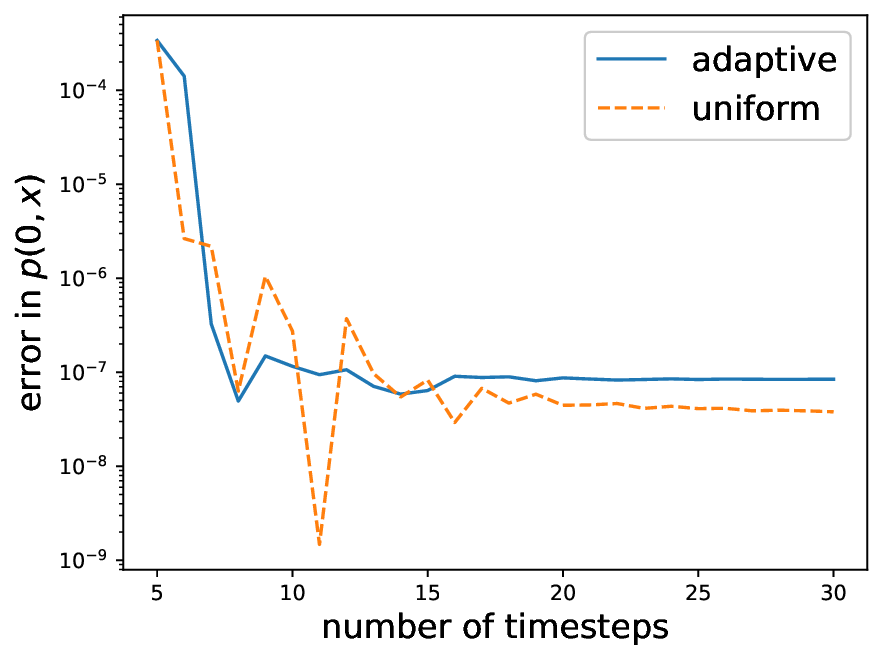}
            \caption{$\varepsilon = 0.5$}
        \end{subfigure}\\
        \caption{Comparison between the error in the adjoint's initial condition on a time adaptive grid and on a uniform grid. For a given number of steps in time $N$, the uniform grid is created with evenly distributed timesteps, while the adaptive grid evolved from a uniform grid with $N = 5$ timesteps using the adaptive cycle. The number of steps in space is constant for both grids with $d = 40$. The considered problem is described in Example~\ref{ex:eps-setup} with $\Omega_T = (0,1)^2$, $\nu = 0.1$, $\alpha=1$ and $m=0.5$. Each plot depicts the mean-square error between the initial state of the constructed adjoint $p$ and the initial state of the computed adjoint $p^k$ in dependence of the number of timesteps, i.e.\ $MSE=\frac{1}{d} \sum_{i=1}^{d} (p(0,x_i) -  p^k (0, x_i))^2$}
        \label{fig:eps-error}
    \end{figure}

    \begin{figure}
        \centering
        \begin{subfigure}[t]{0.3\textwidth}
            \centering
            \includegraphics[width=\linewidth]{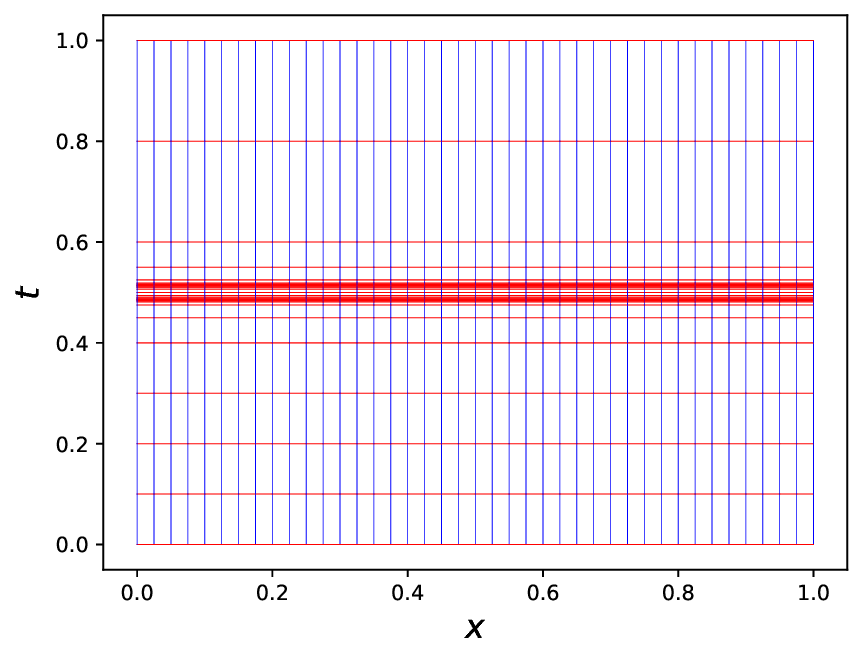}
            \caption{$\varepsilon = 0.05$}
        \end{subfigure}
        \hspace*{\fill}
        \begin{subfigure}[t]{0.3\textwidth}
            \centering
            \includegraphics[width=\linewidth]{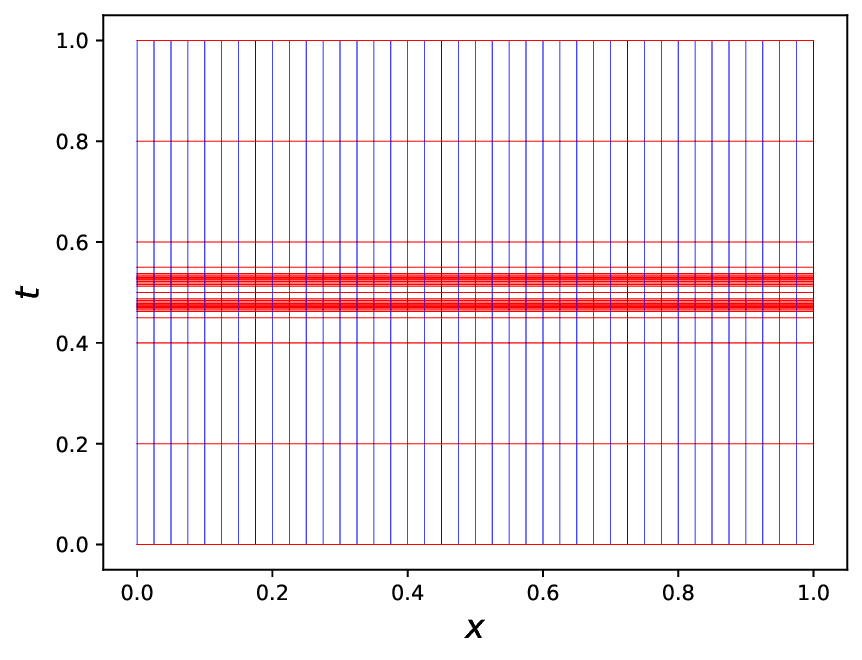}
            \caption{$\varepsilon = 0.1$}
        \end{subfigure}
        \hspace*{\fill}
        \begin{subfigure}[t]{0.3\textwidth}
            \centering
            \includegraphics[width=\linewidth]{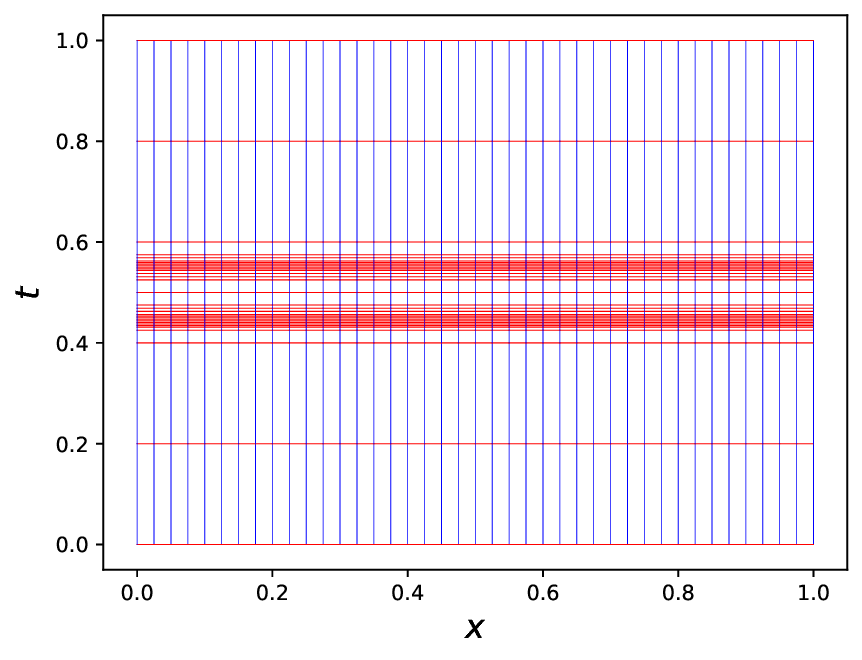}
            \caption{$\varepsilon = 0.2$}
        \end{subfigure}\\

        \centering
        \begin{subfigure}[t]{0.3\textwidth}
            \centering
            \includegraphics[width=\linewidth]{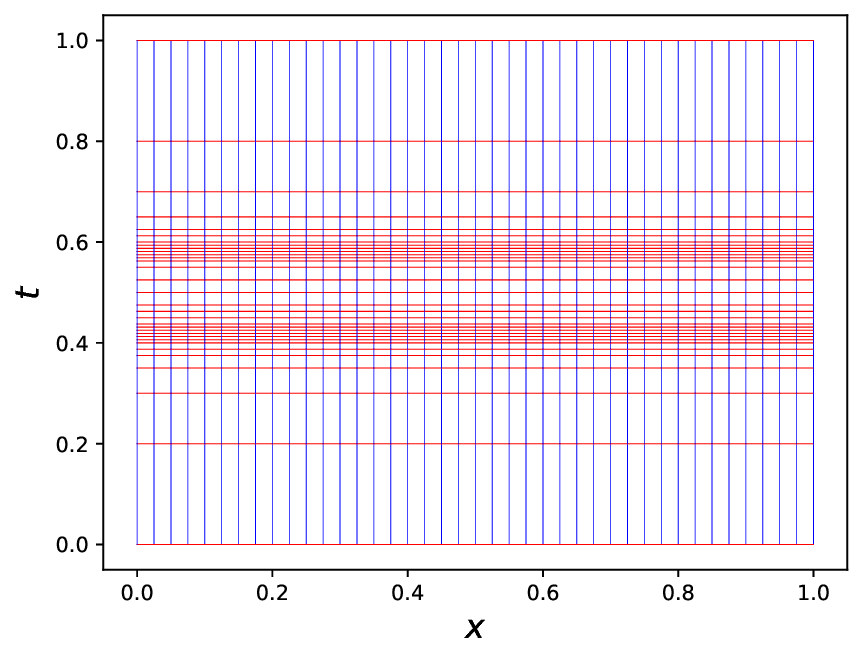}
            \caption{$\varepsilon = 0.3$}
        \end{subfigure}
        \hspace*{\fill}
        \begin{subfigure}[t]{0.3\textwidth}
            \centering
            \includegraphics[width=\linewidth]{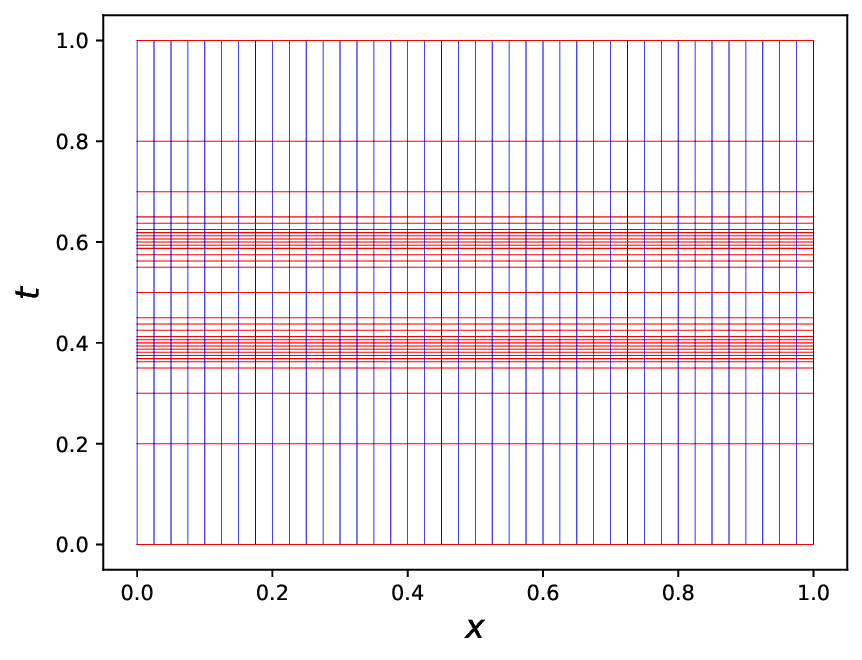}
            \caption{$\varepsilon = 0.4$}
        \end{subfigure}
        \hspace*{\fill}
        \begin{subfigure}[t]{0.3\textwidth}
            \centering
            \includegraphics[width=\linewidth]{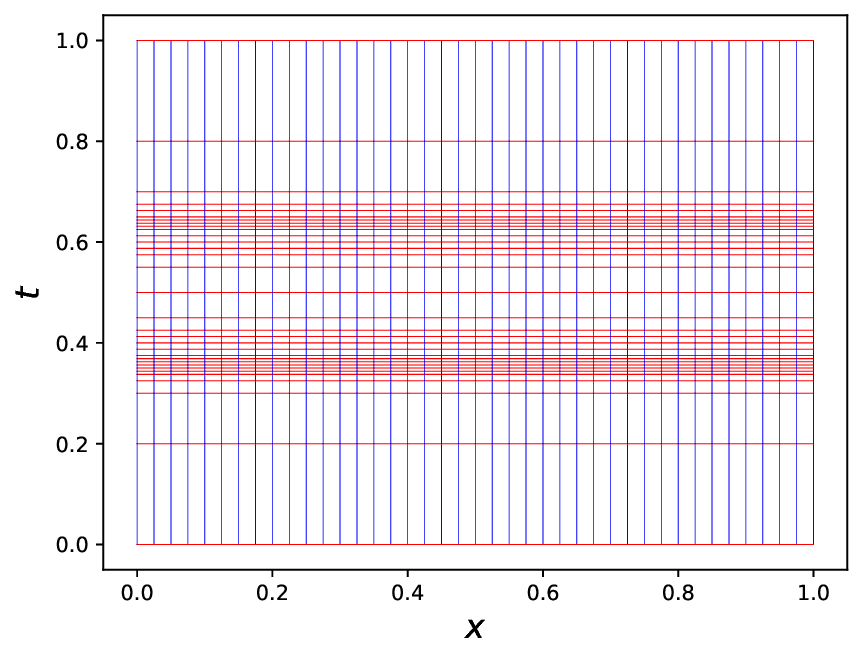}
            \caption{$\varepsilon = 0.5$}
        \end{subfigure}\\
        \caption{Structure of the time adaptive grid for the problem as described in Example~\ref{ex:eps-setup} with $\Omega_T = (0,1)^2$, $\nu = 0.1$, $\alpha=1$ and $m=0.5$. Each plot depicts an adaptive time grid for a different choice of $\varepsilon$. Each adaptive grid was created from a grid with $N=5$ evenly spaced timesteps by bisection of the interval with the largest error estimate. The adaptive cycle has been repeated $25$ times until $N=30$ timesteps have been reached. The number of steps in space is constant $d = 40$}
        \label{fig:eps-grid}
    \end{figure}

    \section{Conclusion and outlook}
    As a first result, we have successfully adapted the reformulation strategy from distributed control problems to initial control problems. Hence, we are now able to solve certain data assimilation problems, which are governed by a parabolic second order PDE, by the discretization of a PDE which is second order in time and fourth order in space. Furthermore, we demonstrated the functionality of this concept in numerical examples.

    As a second result, we came up with an error estimate for the initial condition of the adjoint state. Since the a-posteriori error bound is dependent on data from instances in time, we were able to decide at which time instances the discretization grid requires refinement. The resulting adaptive grid turned out to be better than initially anticipated, since it can be computed a-priori before the first computation of the adjoint state if linear finite elements in space and time are used for the discretization. \\

    Due to the novelty of the method and its significant difference from classical solution techniques for data assimilation, there are numerous aspects which form the basis for further research. We intend to compare the approach with classical methods for solving data assimilation problems and established methods for optimal control problems. Notice that this also requires to come up with suitable metrics on whose basis the comparison can be achieved. 
    
    Furthermore, we are interested in the extension of the method from the currently provided artificial environment to a setup with more realistic assumptions. This includes the restriction of the control to $\Uad$ and therefore the installment of the projection mapping as given in \eqref{eq:projected-variational}, lifting the regularity assumptions on the observations $y^{(d)}$ such that sparsely given observations may also be considered, as well as weighting the norms in the control problem \eqref{eq:control-problem} in order to include knowledge about the error from applications and previous executions (cf. Remark~\ref{rem:alpha}). Since solving the elliptic problem for more realistic scenarios can be expensive, we plan to study the application of model reduction techniques in order to decrease the computational complexity.

    Finally, let us mention that it would be beneficial to come up with an error estimation which allows to estimate the error both in space and time in order to surpass the heuristic assumption mentioned in Remark~\ref{rem:heuristic_assumption}.

    \bibliographystyle{scientific-computing}
    \bibliography{main}

\end{document}